\documentclass[reqno, a4paper]{amsart}
\usepackage{a4wide}
\usepackage{graphicx}
\usepackage{bm}
\usepackage{stmaryrd}
\SetSymbolFont{stmry}{bold}{U}{stmry}{m}{n}
\usepackage{graphicx,color,overpic,rotating}
\usepackage[pdftex,dvipsnames]{xcolor}
\usepackage{mathrsfs}
\usepackage[labelformat=simple]{subcaption}
\usepackage[final,kerning=true,spacing=true,factor=1100,stretch=10,shrink=10]{microtype}

\usepackage{enumitem}

\usepackage{vem_macros}

 \newtheorem{theorem}{Theorem}
 \newtheorem{lemma}[theorem]{Lemma}
 \newtheorem{assumption}[theorem]{Assumption}
 \newtheorem{remark}[theorem]{Remark}
 \newtheorem{definition}[theorem]{Definition}

\numberwithin{equation}{section}
\numberwithin{theorem}{section}

\newcommand{\citer}[1]{~\cite{#1}}
\newcommand{\citers}[1]{~\cite{#1}}

\newcounter{saveenum}
\newenvironment{enumerate-assumption}
    {\begin{enumerate}[label=\textbf{A\arabic*}, leftmargin=0.75cm]
    \setcounter{enumi}{\value{saveenum}}
    }
    {
    \setcounter{saveenum}{\value{enumi}}
    \end{enumerate}
    }

\title[Adaptive non-hierarchical Galerkin methods for parabolic problems]{Adaptive non-hierarchical Galerkin methods\\ for parabolic problems with application to\\ moving mesh and virtual element methods}

 \author[A. Cangiani]{Andrea Cangiani}
 \address[A. Cangiani]{School of Mathematical Sciences, University of Nottingham, University Park, Nottingham, NG7 2RD, UK}
 \email{Andrea.Cangiani@nottingham.ac.uk}
 
 \author[E. H. Georgoulis]{Emmanuil H. Georgoulis}
 \address[E. H. Georgoulis]{School of Mathematics and Actuarial Science, University of Leicester, Leicester, LE1 7RH, UK \emph{and} Department of Mathematics, School of Applied MAthematical and Physical Sciences, National Technical University of Athens, Zografou 15780, Greece \emph{and} IACM-FORTH, Crete Greece}
 \email{Emmanuil.Georgoulis@le.ac.uk}
 
 \author[O. J. Sutton]{Oliver J. Sutton}
 \address[O. J. Sutton]{School of Mathematical Sciences, University of Nottingham, University Park, Nottingham, NG7 2RD, UK}
 \email{Oliver.Sutton@nottingham.ac.uk}
\date{3$^{\text{rd}}$ April 2020}

\begin{document}

\begin{abstract}
	We present \emph{a posteriori} error estimates for inconsistent and non-hierarchical Galerkin methods for linear parabolic problems, allowing them to be used in conjunction with very general mesh modification for the first time.
	We treat schemes which are non-hierarchical in the sense that the spatial Galerkin spaces between time-steps may be completely unrelated from one another.
	The practical interest of this setting is demonstrated by applying our results to finite element methods on moving meshes and using the estimators to drive an adaptive algorithm based on a virtual element method on a mesh of arbitrary polygons.
	The a posteriori error estimates, for the error measured in the $L^2(H^1)$ and $L^{\infty}(L^2)$ norms, are derived using the elliptic reconstruction technique in an abstract framework designed to precisely encapsulate our notion of inconsistency and non-hierarchicality and requiring no particular compatibility between the computational meshes used on consecutive time-steps, thereby significantly relaxing this basic assumption underlying previous estimates.
\end{abstract}

\maketitle

\section{Introduction}

Computable error estimates 
are used within simulations of natural and physical phenomena to ensure that accurate and reliable results are produced as efficiently as possible.
For those governed by systems of partial differential equations (PDEs), such \emph{a posteriori} computable error estimates are
often employed to drive adaptive algorithms, in which key components of the numerical scheme such as the computational mesh are automatically modified to focus computational effort in specific regions where higher resolution is required.
Although estimates such as these have been widely studied, many open questions remain.
In particular, although our understanding of error estimation for elliptic problems is by now rather mature (see
\citers{Ainsworth:2000cv,Verfurth:uz,Brenner:2008tq} for instance), the literature on error estimation for parabolic or hyperbolic systems is substantially less complete.

Optimal order a posteriori error estimates for linear parabolic problems in the $L^2(H^1)$ 
norm may be proven using direct energy arguments~\cite{Picasso:1998iw,Chen:2004ck}. 
Although the same arguments provide an estimate of the (higher order) error in the $L^{\infty}(L^2)$ norm, the resulting estimators are in fact typically of suboptimal order.
The first a posteriori error estimates in the $L^{\infty}(L^2)$ norm which were numerically demonstrated to be of optimal order were derived using duality techniques by Eriksson and Johnson~\cite{ERIKSSON:1991be,ERIKSSON:1995in}.
The alternative \emph{elliptic reconstruction} technique, introduced by Makridakis and Nochetto~\cite{Makridakis:2003ws}, allows a posteriori error estimates to be derived for the 
$L^{\infty}(L^2)$ norm via energy arguments by introducing elliptic reconstructions of the discrete solution.
The reconstruction splits the error into an elliptic component, which is estimated using existing a posteriori error estimates derived for an associated elliptic problem, and a parabolic component which satisfies a differential equation with data which may be numerically verified to be controlable at optimal order; see~\citer{Makridakis:2007ef} for an overview.

However, existing error estimates for parabolic problems in the $L^{\infty}(L^2)$ norm, and seemingly all such estimates of the error measured in norms weaker than the $L^2(H^1)$ energy norm, crucially rely, to the best of our knowledge, on the assumption that the discrete function spaces are \emph{hierarchical}.
By this, we refer to the case when the intersection of the finite element spaces used on consecutive time steps is itself a finite element space offering similar approximation properties.
Here we fill this gap by deriving error estimates for a model parabolic reaction-diffusion problem, in the $L^2(H^1)$ norm and the $L^{\infty}(L^2)$ norm, which do not place this requirement on the spaces.
We note that unsuitable non-hierarchical mesh modification can lead to divergent numerical methods in the context of evolution PDEs~\cite{Dupont:1982}, and the effects of non-hierarchicality in subsequent spatial finite element spaces for an evolution PDE can therefore introduce new challenges and behaviours. 
The a posteriori error bounds presented in this work could eventually be used in understanding such phenomena further and possibly aid the user in avoiding such scenarios in practical simulations.

The apparently innocuous assumption of hierarchicality is particularly restrictive in practice and unrepresentative of the general case, as exemplified by the following five scenarios in which non-hierarchicality naturally appears:
\begin{enumerate}[leftmargin=0.65cm]
	\item \textbf{Non-hierarchical refinement or coarsening.}
				A bilinear polynomial space on a rectangular element has the basis $\{1, x, y, xy\}$, yet if this element is refined into two triangular elements then the discrete space on each may be spanned only by the basis $\{1, x, y\}$.
				The function $xy$ cannot then be represented on the refined element and the spaces are therefore not hierarchical.
				Refining a mesh of squares into a mesh of triangles in the presence of homogeneous Dirichlet boundary conditions can mean the intersection of the two global finite element spaces is just the zero function.
	\item \textbf{Moving meshes.}
				If a mesh node is moved, then piecewise polynomial functions with respect to the original mesh cannot in general be represented on the modified mesh.
				Such a situation arises, for instance, in classical moving mesh and $r$-adaptive methods~\cite{Budd:2009hxb}, and in fluid-structure interaction problems.
	\item \textbf{Non-polynomial discrete function spaces.}
				Common non-polynomial discrete function spaces are naturally non-hierarchical under refinement.
				For instance, on meshes with polygonal elements, function spaces are typically directly tailored to the physical geometry of the elements, often in the form of rational functions or solutions to local boundary value problems and are therefore not hierarchical.
	\item \textbf{Boundary conditions.}
				If a non-polynomial essential boundary condition is incorporated into the space, hierarchicality is lost because the boundary traces of the discrete functions change when an element adjacent to the boundary is refined.
	\item \textbf{Domain approximation.}
				Hierarchicality is automatically lost if the mesh only approximates the problem domain or interior interfaces, so that the boundaries of the mesh change with refinement~\cite{Dorfler:1998,Ainsworth:2017,Cangiani:2018,Cangiani:2020}.
\end{enumerate}

The results we present here tackle challenges (1), (2) and (3) above, with a particular focus on treating schemes incorporating very general forms of mesh modification.
We demonstrate this with two examples: a conforming finite element method built on a moving mesh  (Section~\ref{sec:fem}), and 
a virtual element method (Section~\ref{sec:vem}). 
In the latter example, we also demonstrate the effectivity of the error estimators to drive a mesh adaptive algorithm exploiting meshes consisting of arbitrary polygonal elements.
Despite the fundamental appeal of using polygonal meshes in adaptive algorithms for time dependent problems, due to their natural ability to handle coarsening operations by simply merging arbitrary patches of elements, there does not appear to be any existing literature in this area, aside from the doctoral thesis of~\citer{Sutton:2017vc}.

We remove the assumption of hierarchical spaces in two stages,
producing two distinct estimates for the error component measuring the modification of the discrete spaces between time-steps, given in Lemmas~\ref{lem:residualEstimates:twoMesh:local} and~\ref{lem:residualEstimates:twoMesh:global} respectively.
Firstly, we suppose that the \emph{meshes} are still hierarchical, in the sense that one is constructed from the other by coarsening or refining a small number of elements, even though the function spaces themselves are not. This setting  is particularly applicable to scenarios (1), (3), and (4) above.
The form of this estimate mimics that of previous analogous estimates in the hierarchical setting~\cite{Lakkis:2006jk}, but with two extra terms which achieve a degree of `smallness' from the fact that they are only active on those few elements which are modified.

Secondly, we consider the case when the meshes may be completely different between time-steps, thereby incorporating moving mesh schemes (as in scenario 2) or the complete re-meshing of the domain.
The result hinges on the introduction of an \emph{elliptic transfer operator} (Definition~\ref{def:ellipticTransfer}) which provides a natural representation on one mesh of a discrete function defined on another, with respect to the PDE being studied.
The key role played by the elliptic transfer operator in the analysis 
is to enable a `discrete integration by parts' to be performed, ultimately replacing a term in the estimate which otherwise scales sub-optimally with an optimal term.

Although we study this operator in the context of backward Euler time-stepping, its properties mean that it may be expected to be of more general interest.
For example, it has previously been shown by B\"ansch et al.~\cite{Bansch:2012gu,Bansch:2013cu} that Crank-Nicolson time-stepping schemes can become unstable under mesh refinement when the previous solution is projected onto the new mesh.
Instead, in~\citers{Bansch:2012gu,Bansch:2013cu} they produce a stable scheme by also transferring the discrete Laplacian of the solution to the new mesh.
This is unnecessary for the elliptic transfer operator, at the expense of solving an additional elliptic problem for the transferred solution, since the discrete Laplacian of the transferred function is simply the $L^2(\domain)$-orthogonal projection onto the new mesh of the discrete Laplacian of the original function.

We first present the estimates in Section~\ref{sec:abstractFramework}, in the abstract framework of a general inconsistent non-hierarchical Galerkin method satisfying certain approximation properties.
We then show, in Section~\ref{sec:fem}, how this translates into the simpler context of a conforming finite element scheme constructed on a moving (time dependent) mesh, with numerical examples demonstrating the behaviour of the error estimate.

As a second example, in Section~\ref{sec:vem} we take a detailed look at how our results apply to a virtual element discretisation, incorporating adaptive meshes composed of general polygonal elements. 
The virtual element method (VEM), introduced in~\citer{VEIGA:2013wi}, is a generalisation of the finite element method to meshes containing general polygonal or polyhedral elements.
The application of virtual element methods to time-dependent problems is still in its infancy, with only schemes and convergence results presented for a model heat equation~\cite{Vacca:2015jh} and a Cahn-Hilliard problem~\cite{Antonietti:2016kr}.
Instead, virtual element methods for elliptic problems are already well developed; in particular, there is a growing literature on a posteriori error estimates and
adaptivity~\cite{BeiraodaVeiga:2015ku,Cangiani:2016ug,Berrone_apost,Mora:2016un,Weisser19,Antonietti19,daVeiga2019}, which may be utilised through the elliptic reconstruction framework as described above.
Similarly, adaptive algorithms incorporating agglomeration techniques have been applied by the discontinuous Galerkin community~\cite{Bassi:2012bt,Collis:2016gp} to efficiently discretise stationary problems on complicated domains from an initial fine mesh.

Here, for the first time, we exploit polygonal meshes for the adaptive solution of time-dependent problems by applying our abstract results to an adaptive virtual element method incorporating general mesh coarsening and refinement.
The $L^{\infty}(L^2)$ and $L^2(H^1)$ error estimates we present both appear to be novel. 
We  examine their practical behaviour through a series  of fixed mesh convergence benchmarks and adaptive tests, confirming that they are effective even in challenging adaptive situations.
Moreover, developing the mesh adaptive scheme itself requires the introduction of various new auxiliary components which may also be of independent interest in other contexts, such as operators to transfer discrete solutions between meshes which remain computable and accurate, even when the discrete basis functions themselves are assumed to be unknown.

We conclude with a brief discussion of our results in Section~\ref{sec:conclusion}.

\section{Model problem and notation}\label{sec:modelandnotation}
For $\omega \subset \Re^{m}$, with $m \in \mathbb{N}$, and functions $v,w \in L^2(\omega)$, we denote the $L^2(\omega)$ inner product by $(v,w)_{\omega} = \int_{\omega} vw \d x$.
We further use $\norm{\cdot}_{W^{k,p}(\omega)}$ and $\abs{\cdot}_{W^{k,p}(\omega)}$ to denote the standard norm and seminorm on the Sobolev space $W^{k,p}(\omega)$ for $k \geq 0$ and $p \in [1,\infty]$ (for further details see~\citer{Adams:2003wi}, for example).
In the case of $p=2$, we shall denote the $L^2(\omega)$ norm by $\norm{\cdot}_{\omega}$ and the $H^k(\omega)$ norm and seminorm by $\norm{\cdot}_{\omega,k}$ and $\abs{\cdot}_{\omega,k}$, respectively.
If $\omega = \domain$, the physical domain, then we shall omit the subscripts $\omega$ above.

Let $\finaltime > 0$ and let $\domain \subset \Re^{\spacedim}$, with $\spacedim = 2,3$, be a convex polytope.
We focus on the model parabolic problem: find $u : \domain \times [0,\finaltime] \to \Re$ satisfying
\begin{align}\label{eq:pde}
 \begin{split}
 u_t(x,t) - \diffop u(x,t) &= \force(x,t)\phantom{u_0(x)0} \text{ for } (x,t) \in \domain \times (0,\finaltime],
 \\
 u(x,0) &= u_0(x)\phantom{\force(x,t)0} \text{ for } x \in \domain,
 \\
 u(x,t) &= 0\phantom{u_0(x)\force(x,t)} \text{ for } (x,t) \in \boundary \times (0,\finaltime],
 \end{split}
\end{align}
with $\diffop$ denoting the second order linear elliptic reaction-diffusion operator
\begin{align*}
 \diffop v = \nabla \cdot (\diff \nabla v) - \reac v,
\end{align*}
where $\reac \in L^2(\domain)$ is such that there exists a constant $\reacLower \in \Re$ with $\reac(x) \geq \reacLower \geq 0$ for almost every $x \in \domain$.
We suppose that $\diff : \domain \to \Re^{\spacedim \times \spacedim}$ is symmetric and positive definite,
i.e. there exist constants $\ellipLower, \ellipUpper > 0$ such that
	$\ellipLower \abs{\vec{v}}^2 \leq \vec{v}^{\top} \diff(x) \vec{v} \leq \ellipUpper \abs{\vec{v}}^2$
for all $\vec{v} \in \Re^{\spacedim}$ and almost every $x \in \domain$, where $\abs{\cdot}$ denotes the Euclidean norm on $\Re^{\spacedim}$.

\begin{remark}
The results we present can be extended to non-convex domains via the careful application of weighted estimates; see, for example,~\citers{Liao:2003ke,Wihler:2007tv}. 
Furthermore, the non-hierarchicality introduced by removing the assumption  on $\Omega$ being a polytope would naturally fit within our framework.
Finally, more general boundary conditions can also be treated.
We do not pursue these here to avoid introducing additional (addressable) technicalities.
\end{remark}

Let $\A : H^1_0(\domain) \times H^1_0(\domain) \to \Re$ denote the bilinear form
\begin{align*}
	\A(v,w) = (\diff \nabla v, \nabla w) + (\reac v,w),
\end{align*}
and let $\triplenorm{v}^2 = \A(v, v)$ denote the norm induced on $H^1_0(\domain)$.
We shall also use the notation $\A_{\omega}$ to represent the bilinear form $\A$ with its component integrals taken over the set $\omega$.
We observe that $\A$ is continuous in $\triplenorm{\cdot}$, and this norm is equivalent to the $H^1(\domain)$ seminorm, i.e. there exists a constant $\Cequiv > 0$ such that
\begin{align}\label{eq:normEquivalence}
	\Cequiv^{-1} \abs{\testfn}_1 \leq \triplenorm{\testfn} \leq \Cequiv \abs{\testfn}_1,
\end{align}
for all $\testfn \in H^1_0(\domain)$.
Consequently, the Poincar\'e-Friedrichs-type inequality
\begin{align}\label{eq:PoincareFriedrichs}
	\norm{\testfn} \leq \Cpf \triplenorm{\testfn},
\end{align}
holds for any $\testfn \in H^1_0(\domain)$, with constant $\Cpf > 0$ depending on $\Cequiv$ and $\domain$.

The problem~\eqref{eq:pde} can be posed in the weak form: find $u \in L^{2}(0,\finaltime; H^1_0(\domain))$ with $u_t \in L^{2}(0,\finaltime; H^{-1}(\domain))$ such that
\begin{align}
	(u_t(t), \testfn) + \A(u(t), \testfn) = (\force(t), \testfn) \text{ for all } \testfn \in H^1_0(\domain) \text{ and a.e. } t \in [0,\finaltime].
	\label{eq:continuousProblem}
\end{align}
Standard arguments ensure that this problem possesses a unique solution~\cite{Evans:2010ec}.

\section{Abstract error estimates}\label{sec:abstractFramework}

We develop a posteriori error estimates in the abstract framework of an inconsistent Galerkin method built around discrete function spaces which may not be hierarchical, with no compatibility required between the spaces used on different time-steps.
In particular,
given a partition $\{t^{\curr}\}_{\curr = 0}^{N}$ of the time domain $[0,\finaltime]$, with $\timestep^{\curr} = t^{\curr} - t^{\prev} > 0$ for $\curr \in \{ 1,\dots,N \}$, we suppose that the scheme is formed of the following components.

\begin{assumption}[Components of the discrete framework]\label{assump:abstractFramework}
	For each $\curr \in \{0,\dots,N\}$ we assume that there exists
	\begin{enumerate-assumption}
		\item \label{item:abstractFramework:mesh}
			A \emph{mesh} $\mesh[\curr]$, dividing $\domain$ into a finite number of non-overlapping polytopic elements $\E$, such that the cardinality of the set $\sides[\E]$, denoting the set of sides of $\E$ (co-dimension one planar facets; edges when $\spacedim = 2$, faces when $\spacedim = 3$), is uniformly bounded.
			Further, there exists a constant $\meshReg > 0$ which is uniformly bounded with respect to mesh modification, satisfying $h_{\side} \geq \meshReg h_{\E}$ for each side $\side \in \sides[\E]$, where $h_{\omega}$ denotes the diameter of the set $\omega \subset \Re^{\spacedim}$.
		\item \label{item:abstractFramework:fespace}
			A \emph{finite-dimensional discrete function space} $\Vh[\E]$ for each $\E \in \mesh[\curr]$, which may be combined to build the conforming global discrete function space
			\begin{align}\label{eq:globalSpaceDefinition}
				\Vh[\curr] := \{ \trialfn \in H^1_0(\domain) : \trialfn|_{\E} \in \Vh[\E] \text{ for each } \E \in \mesh[\curr] \}.
			\end{align}
		\item \label{item:abstractFramework:bilinearForms}
			A pair of \emph{local discrete bilinear forms} $\m[\curr]_{\E} : \Vh[\E] \times \Vh[\E] \to \Re$ and $\A[\curr]_{\E} : \Vh[\E] \times \Vh[\E] \to \Re$ for each $\E \in \mesh[\curr]$,
			approximating the $L^2(\E)$ inner product $(\cdot, \cdot)_{\E}$ and the bilinear form $\A_{\E}$, respectively.
			These are summed to form the global discrete bilinear forms $\mh[\curr], \Ah[\curr] : \Vh[\curr] \times \Vh[\curr] \to \Re$, namely, 
			\begin{align}\label{eq:abstractGlobalBilinearForms}
				\mh[\curr](\cdot, \cdot) := \sum_{\E \in \mesh[\curr]} \m[\curr]_{\E}(\cdot, \cdot)
				\quad\text{ and }\quad
				\Ah[\curr](\cdot, \cdot) := \sum_{\E \in \mesh[\curr]} \A[\curr]_{\E}(\cdot, \cdot).
			\end{align}
			which are assumed to be inner products on $\Vh[\curr]$.
		\item \label{item:abstractFramework:forcingTerm}
			An elementwise projection operator $\brokenProj[\curr] : L^2(\domain) \to L^2(\domain)$ providing an \emph{approximation of the forcing data} $\forceProj[\curr] = \brokenProj[\curr] \force[\curr]$, where $\force[\curr] \equiv \force(t^{\curr})$, for which there exists a constant $\Cdata > 0$ such that for any $\testfn \in H^1(\domain)$
			\begin{align}\label{eq:approximationProperties:dataApproximation}
				(\force[\curr] - \forceProj[\curr], \testfn) \leq \Cdata \norm{h_{\curr} (\force[\curr] - \forceProj[\curr])} \triplenorm{\testfn}.
			\end{align}
			We further introduce $\forceh[\curr] = \LTwoRecon[\curr] \forceProj[\curr] \in \Vh[\curr]$, where the projection operator $\LTwoRecon[\curr] : L^2(\domain) \to \Vh[\curr]$ satisfies
			\begin{align}\label{eq:forcehDefinition}
				\mh[\curr](\LTwoRecon[\curr] \forceProj[\curr], \testcurr) = (\forceProj[\curr], \testcurr) \quad \text{ for all } \testcurr \in \Vh[\curr].
			\end{align}
		\item \label{item:abstractFramework:meshTransfer}
			A \emph{transfer operator} $\transfer[\curr] : \Vh[\prev] \to \Vh[\curr]$ which may be practically computed.
	\end{enumerate-assumption}
\end{assumption}

The mesh skeleton, formed as the set of all element sides in the mesh $\mesh[\curr]$, will be denoted by $\sides[\curr] = \bigcup_{\E \in \mesh[\curr]} \sides[\E]$, and we introduce the mesh-size function $h_{\curr} : \domain \to \Re$ associated with $\mesh[\curr]$ such that
$h_{\curr}(x) = h_{\E}$ for $x \in \E \in \mesh[\curr]$ and $h_{\curr}(x) = h_{\side}$ for $x \in \side \in \sides[\curr]$.
We further introduce the skeleton norm $\norm{\cdot}_{\sides[\curr]}^2 = \sum_{\side \in \sides[\curr]} \norm{\cdot}_{\side}^2$.
For brevity, we describe the inconsistency of the bilinear forms as follows.

\begin{definition}[Representation of inconsistency]
\label{def:inconsistencyRepresentation}
	For $w^{\curr}, \testcurr \in \Vh[\curr]$, let
	\begin{align*}
		\inconsistencym[\curr](w^{\curr}, \testcurr) &= (w^{\curr}, \testcurr) - \mh[\curr](w^{\curr}, \testcurr)
		\quad \text{and} \quad
		\inconsistencya[\curr](w^{\curr}, \testcurr) = \A(w^{\curr}, \testcurr) - \Ah[\curr](w^{\curr}, \testcurr).
	\end{align*}
	Then, we say that a scheme is inconsistent if there exists an $\curr \in \mathbb{N}$ such that $\inconsistencym[\curr](w^{\curr}, \testcurr) \neq 0$ or $\inconsistencya[\curr](w^{\curr}, \testcurr) \neq 0$.
\end{definition}

\begin{remark}[Approximations of the forcing data]
	We introduce $\forceProj[\curr]$ and $\forceh[\curr]$ separately above in order to separate the discretisation of the forcing data from the projection of it into the discrete space which naturally arises in the analysis.
	For a finite element scheme, $\brokenProj[\curr]$ could be taken as the identity operator, a Lagrangian interpolation operator, or a local projection into a finite element space, for example.
	Similarly, if the bilinear forms are consistent, $\LTwoRecon[\curr]$ is simply the $L^2(\domain)$-orthogonal projector onto $\Vh[\curr]$.
	Beyond the potential for $\forceProj[\curr]$ to be discontinuous, the crucial difference between the two is that $\forceh[\curr]$ is required to be zero on $\boundary$ since $\Vh[\curr] \subset H^1_0(\domain)$.
	Defining these separately ensures that the data is approximated at optimal order in the final estimate.
\end{remark}
\subsection{Numerical scheme}

The discrete scheme we pose for approximating solutions to the problem~\eqref{eq:pde} is: given $U^0 \in \Vh[0]$ approximating $u_0$, for each $\curr = 1,\ldots,N$ find $\ucurr \in \Vh[\curr]$ satisfying
\begin{align}\label{eq:fullyDiscreteScheme}
	\mh[\curr] \left( \dfrac{\ucurr - \transfer[\curr] \uprev}{\timestep^{\curr}}, \testcurr \right) + \Ah[\curr](\ucurr, \testcurr) = \mh[\curr](\forceh[\curr], \testcurr)
	\quad \text{ for all }
	\testcurr \in \Vh[\curr].
\end{align}
The fact that $\mh[\curr]$ is an inner product on $\Vh[\curr]$ implies the following equivalent pointwise form of the numerical scheme: given $U^0$, find $\ucurr \in \Vh[\curr]$ satisfying
\begin{align}\label{eq:discreteSchemePointwise}
	\timederivh[\curr]  \ucurr - \ellipReconRHSLag{\curr}{\curr}{\ucurr} = \forceProj[\curr]
	\quad
	\text{ for each } \curr \in \{ 1,\ldots,N \}.
\end{align}
Here, we have used the following discrete differential operators, noting that $\diffoph[\curr]$ is the analogue of the discrete Laplacian operator (cf. \citer{Thomee:2006th}) in the current setting.

\begin{definition}[Discrete differential operators]\label{def:discreteSpatialOperators}
	Let $\timederivh[\curr] : \Vh[\curr] \to \Vh[\curr]$ denote the \emph{discrete time derivative operator}, defined by
	\begin{align*}
		\timederivh[\curr] \ucurr := \frac{\ucurr - \transfer[\curr] \uprev}{\timestep^{\curr}} \in \Vh[\curr].
	\end{align*}
	We also define the \emph{discrete spatial operator} $\diffoph[\curr] : \Vh[\curr] \to \Vh[\curr]$ such that, for $\trialcurr \in \Vh[\curr]$
	\begin{align}\label{eq:discreteLaplacianDef}
		-\mh[\curr](\diffoph[\curr] \trialcurr, \testcurr) = \Ah[\curr](\trialcurr, \testcurr) \qquad \forall \testcurr \in \Vh[\curr],
	\end{align}
	and the \emph{data-dependent discrete spatial operator} $\ellipReconRHSLag{\curr}{m}{} : \Vh[\curr] \to L^2(\domain)$ given by
	\begin{align*}
		-\ellipReconRHSLag{\curr}{m}{\trialcurr} = -\diffoph[\curr] \trialcurr - (\LTwoRecon[\curr] - \identity) \brokenProj[\curr] \force[m],
	\end{align*}
	where $\identity$ denotes the identity operator.
	We emphasise that the superscript indicates the discrete space used in the construction, while the subscript represents the time step at which the data $\force[m]$ is evaluated.
\end{definition}

The definition of $\LTwoRecon[\curr]$ ensures that the discrete spatial operators are related by
\begin{align}\label{eq:modifiedDiffOpInconsistency}
	(\ellipReconRHSLag{\curr}{m}{\trialcurr}, \testcurr) = (\diffoph[\curr] \trialcurr, \testcurr) + \inconsistencym[\curr](\LTwoRecon[\curr] \brokenProj[\curr] \force[m], \testcurr).
\end{align}

\subsection{Solution reconstructions}
The forthcoming analysis revolves around the concept of an \emph{elliptic reconstruction operator}, introduced by Makridakis and Nochetto~\cite{Makridakis:2003ws}, which we define here as follows.

\begin{definition}[Elliptic reconstruction operator]
	\label{def:fullydiscrete:ellipticReconstruction}
	For each $\curr, m \in \{0,\dots,N\}$, we define the elliptic reconstruction operator $\ellipReconLag{\curr}{m}{} : \Vh[\curr] \to H^1_0(\domain)$, satisfying
	\begin{equation}\label{eq:ellipticReconstruction}
		\!\A(\ellipReconLag{\curr}{m}{\trialcurr}, \testfn)
		=
		- (\ellipReconRHSLag{\curr}{m}{\trialcurr}, \testfn)
		=
		-(\diffoph[\curr] \trialcurr + (\LTwoRecon[\curr] - \identity) \brokenProj[\curr] \force[m], \testfn)
     \text{ for all } \testfn \in H^1_0(\domain),
	\end{equation}
	with the same super/subscript convention as in Definition~\ref{def:discreteSpatialOperators}.
\end{definition}

The inconsistency of the elliptic reconstrucion in this framework is recorded in the following lemma.
When the discrete bilinear forms are consistent, this reduces to the conventional Galerkin orthogonality relationship $\A(\ellipReconLag{\curr}{\curr}{\trialcurr} - \trialcurr, \testcurr) = 0$.

\begin{lemma}[Elliptic reconstruction inconsistency]\label{lem:ellipreconInconsistency}
	For $\trialcurr \in \Vh[\curr]$, the elliptic reconstruction satisfies
	\begin{align*}
		\A(\trialcurr - \ellipReconLag{\curr}{m}{\trialcurr}, \testcurr)
		&=
		\inconsistencya[\curr](\trialcurr, \testcurr)
		+
		\inconsistencym[\curr](\diffoph[\curr] \trialcurr + \LTwoRecon[\curr] \brokenProj[\curr] \force[m], \testcurr)
	\quad \text{ for all } \testcurr \in \Vh[\curr].
	\end{align*}
\end{lemma}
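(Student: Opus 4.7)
The plan is to start from the identity $\A(\trialcurr - \ellipReconLag{\curr}{m}{\trialcurr}, \testcurr) = \A(\trialcurr, \testcurr) - \A(\ellipReconLag{\curr}{m}{\trialcurr}, \testcurr)$ and rewrite each of the two terms on the right-hand side so that the continuous forms are replaced by the discrete ones, picking up precisely the two inconsistency contributions on the way. The overall strategy is purely an algebraic bookkeeping exercise; there is no analytic obstacle to overcome because $\testcurr \in \Vh[\curr] \subset H^1_0(\domain)$, so every identity that has been recorded for $H^1_0$-test functions may be applied directly.

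First I would deal with the reconstructed term. Since $\testcurr \in H^1_0(\domain)$, Definition~\ref{def:fullydiscrete:ellipticReconstruction} gives $\A(\ellipReconLag{\curr}{m}{\trialcurr}, \testcurr) = -(\ellipReconRHSLag{\curr}{m}{\trialcurr}, \testcurr)$, and then the identity~\eqref{eq:modifiedDiffOpInconsistency} lets me express this as
\begin{equation*}
-\A(\ellipReconLag{\curr}{m}{\trialcurr}, \testcurr) = (\diffoph[\curr] \trialcurr, \testcurr) + \inconsistencym[\curr](\LTwoRecon[\curr] \brokenProj[\curr] \force[m], \testcurr).
\end{equation*}
This already isolates one of the two inconsistency terms appearing in the claim.

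Next I would massage the consistent term $\A(\trialcurr, \testcurr)$. By Definition~\ref{def:inconsistencyRepresentation}, $\A(\trialcurr, \testcurr) = \Ah[\curr](\trialcurr, \testcurr) + \inconsistencya[\curr](\trialcurr, \testcurr)$, and using the defining relation~\eqref{eq:discreteLaplacianDef} of $\diffoph[\curr]$ I can replace $\Ah[\curr](\trialcurr, \testcurr)$ by $-\mh[\curr](\diffoph[\curr] \trialcurr, \testcurr)$. Adding this to the previous display produces the combination $(\diffoph[\curr] \trialcurr, \testcurr) - \mh[\curr](\diffoph[\curr] \trialcurr, \testcurr)$, which by definition is exactly $\inconsistencym[\curr](\diffoph[\curr] \trialcurr, \testcurr)$, thereby producing the second inconsistency contribution.

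Finally, the bilinearity of $\inconsistencym[\curr](\cdot, \testcurr)$ in its first argument packages
\begin{equation*}
\inconsistencym[\curr](\diffoph[\curr] \trialcurr, \testcurr) + \inconsistencym[\curr](\LTwoRecon[\curr] \brokenProj[\curr] \force[m], \testcurr) = \inconsistencym[\curr](\diffoph[\curr] \trialcurr + \LTwoRecon[\curr] \brokenProj[\curr] \force[m], \testcurr),
\end{equation*}
which, together with the surviving $\inconsistencya[\curr](\trialcurr, \testcurr)$ term, is exactly the claimed expression. The only point requiring a little care is keeping track of the identity versus the reconstruction projector in $\ellipReconRHSLag{\curr}{m}{}$, which is handled precisely by~\eqref{eq:modifiedDiffOpInconsistency}; beyond that the proof is a few lines of substitution.
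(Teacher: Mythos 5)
Your argument is correct and uses exactly the same ingredients as the paper's proof: the definition~\eqref{eq:ellipticReconstruction} of the elliptic reconstruction, the identity~\eqref{eq:modifiedDiffOpInconsistency}, the defining relation~\eqref{eq:discreteLaplacianDef} of $\diffoph[\curr]$, and the definitions of $\inconsistencya[\curr]$, $\inconsistencym[\curr]$ together with bilinearity in the first argument. The paper merely packages the last three of these into a single expansion of $(\diffoph[\curr] \trialcurr, \testcurr)$, so your proof is essentially the paper's argument written out in slightly more detail.
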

\begin{proof}
	The result follows by substituting~\eqref{eq:modifiedDiffOpInconsistency} into the definition~\eqref{eq:ellipticReconstruction}, alongside the expansion
		$(\diffoph[\curr] \trialcurr, \testcurr) = -\A(\trialcurr, \testcurr) + \inconsistencya[\curr](\trialcurr, \testcurr) + \inconsistencym[\curr](\diffoph[\curr] \trialcurr, \testcurr)$.
\end{proof}

The \emph{time} and \emph{space-time reconstructions} of the discrete solutions are defined as
\begin{align}\label{eq:timeReconstructions}
	\timerecon(t) = \sum_{n = 0}^{N} \lcurr(t) \ucurr
	\quad \text{ and } \quad
	\spacetimerecon(t) = \sum_{n = 0}^{N} \lcurr(t) \ellipReconLag{\curr}{\curr}{\ucurr},
\end{align}
respectively where, for each $\curr \in \{ 0,\dots,N \}$, the continuous piecewise linear function $\lcurr : [0,T] \to [0,1]$, designed to satisfy $\ell^{i}(t^{j}) = \delta_{ij}$ where $\delta_{ij}$ is Kronecker's delta, is given by
\begin{align}
	\lcurr(t) =
	\begin{cases}
		\frac{t - t^{\prev}}{\timestep^{\curr}} & \text{ for } t \in [t^{\prev}, t^{\curr}],
		\\
		\frac{t^{\curr+1} - t}{\timestep^{\curr}} & \text{ for } t \in [t^{\curr}, t^{\curr+1}],
		\\
		\quad 0 & \text{ otherwise}.
	\end{cases}
\end{align}

These reconstructions split the error $\err(t) = u(t) - \timerecon(t)$ into a \emph{parabolic component} $\ctserr(t) = u(t) - \spacetimerecon(t)$ and an \emph{elliptic component} $\reconerr(t) = \spacetimerecon(t) - \timerecon(t)$.
The power of the elliptic reconstruction approach is that the elliptic component of the error may be estimated using the standard techniques for elliptic problems. This is because a discrete function $\trialcurr \in \Vh[\curr]$ may be viewed as the discrete approximate solution (in the framework of Assumption~\ref{assump:abstractFramework}) to the elliptic problem satisfied by $\ellipReconLag{\curr}{\curr}{\trialcurr}$.
Consequently, terms of the form $\norm{\ellipReconLag{\curr}{\curr}{\trialcurr} - \trialcurr}$ are simply the error of an elliptic problem.
This is particularly attractive in the present context of `exotic' spatial discretisations, and so for now we encapsulate this in the following assumption.
Concrete examples of such estimates are derived in Lemma~\ref{lem:residualEstimates:singleMesh}.

\begin{assumption}[Elliptic reconstruction error estimate]\label{assump:ellipticEstimates:singleMesh}
	We assume that
	\begin{enumerate-assumption}
		\item \label{item:ellipticEstimates:singleMesh}
			There exist \emph{elliptic reconstruction  estimators} $\ellipest{L^2}^{\curr}, \ellipest{H^1}^{\curr} : \Vh[\curr] \times L^2(\domain) \to \Re$ providing, for any $\trialcurr \in \Vh[\curr]$,  the estimates
			\begin{align*}
				\norm{\trialcurr - \ellipReconLag{\curr}{\curr}{\trialcurr}}
				&\leq
				\ellipest{L^2}^{\curr}(\trialcurr,
				\force[\curr])
				\quad\text{ and }\quad
				\triplenorm{\trialcurr - \ellipReconLag{\curr}{\curr}{\trialcurr}}
				\leq
				\ellipest{H^1}^{\curr}(\trialcurr,
				\force[\curr]).
			\end{align*}
	\end{enumerate-assumption}
\end{assumption}

\subsection{Error equation}
The parabolic component $\ctserr(t)$ of the error is estimated via an error equation.
Testing the pointwise form~\eqref{eq:discreteSchemePointwise} of the scheme with an arbitrary $\testfn \in H^1_0(\domain)$, using the definitions of the reconstructions
and recalling the variational problem~\eqref{eq:continuousProblem} gives
\begin{equation}\label{eq:L2H1errorequation}
	(\err_t, v) + \A(\ctserr, v) = (\force(t) - \forceProj[\curr], v) + \A(\ellipReconLag{\curr}{\curr}{\ucurr} - \spacetimerecon(t), v) + (\timederivh[\curr] \ucurr - \timerecon_t(t), v),
\end{equation}
for $t \in (t^{\prev}, t^{\curr}]$, which may be further expressed as
\begin{equation}\label{eq:LinfL2errorequation}
	(\ctserr_t, v) + \A(\ctserr, v) = (\force(t) - \forceProj[\curr], v) + \A(\ellipReconLag{\curr}{\curr}{\ucurr} - \spacetimerecon(t), v) + (\timederivh[\curr] \ucurr - \spacetimerecon_t(t), v).
\end{equation}
Following~\citer{Georgoulis:2011ita}, we note that the former form of the error equation is more convenient for deriving $L^2(0,t; H^1(\domain))$ norm estimates, while the latter can be used for estimates in the $L^{\infty}(0,t; L^2(\domain))$ norm.
The difference between their right-hand sides is in the final term:
for~\eqref{eq:L2H1errorequation} this is
\begin{align}\label{eq:meshTransferCharacterisation}
	(\timederivh[\curr] \ucurr - \timerecon_t(t), v) = \frac{1}{\timestep^{\curr}} ( \uprev - \transfer[\curr] \uprev, v ),
\end{align}
which naturally estimates the error from transferring solutions between meshes, while~\eqref{eq:LinfL2errorequation} contains
\begin{align*}
	(\timederivh[\curr]\ucurr - \spacetimerecon_t(t), v)
	=
	\frac{1}{\timestep^{\curr}}
	( (\ucurr - \ellipReconLag{\curr}{\curr}{\ucurr}) - (\transfer[\curr] \uprev - \ellipReconLag{\prev}{\prev}{\uprev}), v ),
\end{align*}
the nature of which is slightly more subtle, and the estimation of which presents the key difficulty of the $L^{\infty}(0,t; L^2(\domain))$ norm estimate in the non-hierarchical setting.
The derivation of such estimates is the focus of Section~\ref{sec:abstract:ellipticEstimators}, but for now we just assume the existence of the following estimator.

\begin{assumption}[Elliptic reconstruction time derivative estimator]\label{assump:ellipticEstimates:twoMesh}
	Let $\trialcurr \in \Vh[\curr]$ for $\curr \in \{0, \dots, N\}$, and let $\trialfn^{\ellipRecon}(t) = \sum_{\curr = 0}^{N} \ell^{\curr}(t) \ellipReconLag{\curr}{\curr}{\trialcurr}$.
	We assume that
	\begin{enumerate-assumption}
		\item \label{item:ellipticEstimates:twoMesh}
			A \emph{time derivative estimator} $\ellipest{L^2}^{\partial_t} : \Vh[\curr] \times \Vh[\prev] \times L^2(\domain) \times L^2(\domain) \to \Re$ exists with
			\begin{align*}
				\norm{\timederivh[\curr] \trialcurr - \trialfn^{\ellipRecon}_t(t)}
				&
				\leq
				\ellipest{L^2}^{\partial_t}(\trialcurr, \trialprev,
				\force[\curr], \force[\prev])
				\quad \text{ for }
				t \in (t^{\prev}, t^{\curr}].
			\end{align*}
	\end{enumerate-assumption}
\end{assumption}

We now focus on estimating the terms of the error equations above, which will utilise the following individual error estimators as shown in Lemma~\ref{lem:individualTerms}.

\begin{definition}[Estimator terms]\label{def:estimatorterms}
	For $\curr \in \{1,\dots,N\}$ and $t \in [t^{\prev}, t^{\curr}]$ define:
	\begin{itemize}
		\item the \emph{elliptic reconstruction error estimators}
			\begin{align*}
				\ellipreconest[L^2](t) = \sum_{\curr = 0}^{N} \lcurr(t) \ellipest{L^2}^{\curr}(\ucurr, \force[\curr]),
				\quad \text{ and } \quad
				\ellipreconest[H^1](t) = \sum_{\curr = 0}^{N} \lcurr(t) \ellipest{H^1}^{\curr}(\ucurr, \force[\curr]),
			\end{align*}
		\item the \emph{space error estimator}, where $\ellipest{L^2}^{\partial_t}$ satisfies Assumption~\ref{item:ellipticEstimates:twoMesh},
			\begin{align*}
				\spaceest(t)
				&=
				\ellipest{L^2}^{\partial_t}(\ucurr, \uprev, \force[\curr], \force[\prev]),
			\end{align*}
		\item the \emph{time error estimator}
			\begin{align*}
				\timeest(t) &= \norm{\ellipReconRHSLag{\curr}{\curr}{\ucurr} - \ellipReconRHSLag{\prev}{\prev}{\uprev}},
			\end{align*}
		\item the \emph{data approximation error estimators} for \emph{time} and \emph{space}
			\begin{align*}
				\dataest{T}(t) = \norm{\force(t) - \force[\curr]},
				\quad\text{ and }\quad
				\dataest{S}(t) = \Cdata \norm{h_{\curr} (\force[\curr] - \forceProj[\curr])},
				\quad \text{ respectively,}
			\end{align*}
		\item the \emph{mesh transfer error estimator}
			\begin{align*}
				\meshchangeest(t) = \frac{1}{\timestep^{\curr}} \norm{ \uprev - \transfer[\curr] \uprev }.
			\end{align*}
	\end{itemize}
\end{definition}

\begin{lemma}[Bounds for individual terms]\label{lem:individualTerms}
	Suppose that Assumptions~\ref{item:abstractFramework:mesh}--\ref{item:ellipticEstimates:twoMesh} are satisfied.
	Then, for each $n = 1,\dots,N$ and $t \in (t^{\prev}, t^{\curr}]$, the terms of the error equations may be estimated by separate contributions from the spatial error and temporal error
	\begin{align*}
		(\timederivh[\curr] \ucurr - \spacetimerecon_t(t), v )
		\leq
			\spaceest(t) \norm{v}
		\quad\text{ and }\quad
		\A(\ellipReconLag{\curr}{\curr}{\ucurr} - \spacetimerecon(t), v)
		\leq
			\timeest(t) \norm{v},
	\end{align*}
	respectively,
	and the data approximation error and mesh transfer error
	\begin{align*}
		(\force(t) - \forceProj[\curr], v)
		\leq
			\dataest{T}(t) \norm{v}
			+
			\dataest{S}(t) \triplenorm{v}
		\quad\text{ and }\quad
		(\timederivh[\curr] \ucurr - \timerecon_t(t), v)
		\leq
			\meshchangeest(t) \norm{v}.
	\end{align*}
\end{lemma}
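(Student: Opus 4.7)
The plan is to handle the four estimates independently, each by writing the relevant quantity in the interval $(t^{\prev}, t^{\curr}]$ using the fact that only the hat functions $\lcurr$ and $\ell^{\prev}$ are nonzero there, so that $\spacetimerecon$ and $\timerecon$ reduce to convex combinations of two consecutive reconstructions/solutions, with time derivatives equal to simple divided differences.

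For the mesh transfer bound, I would start from the identity~\eqref{eq:meshTransferCharacterisation}, i.e.\
\[
	(\timederivh[\curr]\ucurr - \timerecon_t(t), v) = \frac{1}{\timestep^{\curr}}(\uprev - \transfer[\curr]\uprev, v),
\]
which follows directly from $\timerecon_t(t) = (\ucurr - \uprev)/\timestep^{\curr}$ on $(t^{\prev}, t^{\curr}]$ and the definition of $\timederivh[\curr]$, and then apply Cauchy--Schwarz to recognise $\meshchangeest(t) \norm{v}$.  The data approximation bound splits $\force(t) - \forceProj[\curr] = (\force(t) - \force[\curr]) + (\force[\curr] - \forceProj[\curr])$: the first piece is handled by Cauchy--Schwarz giving $\dataest{T}(t)\norm{v}$, while the second is exactly the object controlled by the assumed approximation property~\eqref{eq:approximationProperties:dataApproximation}, yielding $\dataest{S}(t)\triplenorm{v}$.

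The spatial-error bound is essentially a direct invocation of Assumption~\ref{item:ellipticEstimates:twoMesh}: since $\spacetimerecon_t(t) = (\ellipReconLag{\curr}{\curr}{\ucurr} - \ellipReconLag{\prev}{\prev}{\uprev})/\timestep^{\curr}$, the combination $\timederivh[\curr]\ucurr - \spacetimerecon_t(t)$ coincides with the quantity $\timederivh[\curr]\trialcurr - \trialfn^{\ellipRecon}_t(t)$ appearing in that assumption when we choose $\trialcurr = \ucurr$ and $\trialprev = \uprev$, so Cauchy--Schwarz and the definition of $\spaceest$ close the estimate.  For the temporal-error bound, I use that $\ellipReconLag{\curr}{\curr}{\ucurr} - \spacetimerecon(t) = \ell^{\prev}(t)(\ellipReconLag{\curr}{\curr}{\ucurr} - \ellipReconLag{\prev}{\prev}{\uprev})$ on $(t^{\prev}, t^{\curr}]$ and the fact that $0 \leq \ell^{\prev}(t) \leq 1$; applying the elliptic reconstruction definition~\eqref{eq:ellipticReconstruction} to each of the two elliptic reconstructions and subtracting gives
\[
	\A(\ellipReconLag{\curr}{\curr}{\ucurr} - \ellipReconLag{\prev}{\prev}{\uprev}, v) = -(\ellipReconRHSLag{\curr}{\curr}{\ucurr} - \ellipReconRHSLag{\prev}{\prev}{\uprev}, v),
\]
so Cauchy--Schwarz recovers $\timeest(t)\norm{v}$.

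There is no real obstacle here: the lemma is essentially a bookkeeping step that packages the pieces of the error equations~\eqref{eq:L2H1errorequation}--\eqref{eq:LinfL2errorequation} in the notation of Definition~\ref{def:estimatorterms}, and the only subtle point is making sure to use the elliptic reconstruction identity from Definition~\ref{def:fullydiscrete:ellipticReconstruction} (rather than Lemma~\ref{lem:ellipreconInconsistency}, which concerns discrete test functions) when bounding the temporal-error term with an arbitrary test function $v \in H^1_0(\domain)$.
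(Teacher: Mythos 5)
Your proposal is correct and follows essentially the same route as the paper's proof: the spatial bound is a direct appeal to Assumption~\ref{item:ellipticEstimates:twoMesh} plus Cauchy--Schwarz, the temporal bound uses $\ell^{\prev}(t)\le 1$ together with the reconstruction identity $\A(\ellipReconLag{\curr}{\curr}{\ucurr}-\ellipReconLag{\prev}{\prev}{\uprev},v)=(\ellipReconRHSLag{\prev}{\prev}{\uprev}-\ellipReconRHSLag{\curr}{\curr}{\ucurr},v)$, the data bound adds and subtracts $\force[\curr]$ and invokes~\eqref{eq:approximationProperties:dataApproximation}, and the mesh-transfer bound follows from~\eqref{eq:meshTransferCharacterisation}. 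Your remark about using Definition~\ref{def:fullydiscrete:ellipticReconstruction} (valid for all $v\in H^1_0(\domain)$) rather than Lemma~\ref{lem:ellipreconInconsistency} is exactly the right observation.
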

\begin{proof}
	The spatial error estimate follows from Assumption~\ref{item:ellipticEstimates:twoMesh}.
	The temporal error estimate is derived by using the property that $\lprev(t) \leq 1$ and
	the expansion
	\begin{align*}
		\A(\ellipReconLag{\curr}{\curr}{\ucurr} - \spacetimerecon(t), v)
		&=
		\lprev \A(\ellipReconLag{\curr}{\curr}{\ucurr} - \ellipReconLag{\prev}{\prev}{\uprev}, v)
		=
		\lprev (\ellipReconRHSLag{\prev}{\prev}{\uprev} - \ellipReconRHSLag{\curr}{\curr}{\ucurr}, v).
	\end{align*}
	The data approximation estimate is shown by adding and subtracting $(\force[\curr], \testfn)$ and applying~\eqref{eq:approximationProperties:dataApproximation}.
	The bound for the mesh transfer error follows from~\eqref{eq:meshTransferCharacterisation}.
\end{proof}

\subsection{Parabolic a posteriori error estimates}

We recall some results from \citer{Sutton:2018ex} on \emph{exponentially weighted time accumulations} in Lemma~\ref{lem:timeAccumulations}.
The key appeal of these is to enable $L^{\infty}(L^2)$ error estimates in which the estimator terms accumulate through time in the minimal $L^p([0,t])$ norm for any $p \in [1,\infty]$.
The effectivities of such estimators can therefore become constant with $t$ (since the error and estimator may both accumulate in the $L^{\infty}([0,t])$ norm), rather than growing like $t$ or $t^{1/2}$ as they would if only $L^1$ or $L^2$ accumulations were used respectively (see~\citer{Sutton:2018ex} for details).
In the statement of the lemma, the term $F$ represents an estimator term accumulating with simulation time, and $\errorSurrogate$ represents the error to be estimated.
The terms estimated in each case are typical terms encountered in the $L^{\infty}(0,t; L^2(\domain))$ error analysis in Theorem~\ref{thm:reconstructionError}.

\begin{lemma}[Exponentially weighted time accumulations~{\cite[Lemma 4.9]{Sutton:2018ex}}]
	\label{lem:timeAccumulations}
	Let $p \in [1,\infty]$, $\lambda \in [0,1]$, and $r \in (0,T]$.
	We introduce the
	\emph{accumulation weighting coefficients}
	\begin{align*}
		c_{p,r} := \norm{\beta_r}_{L^q(0,r)}
		=
		\begin{cases}
			 \Big( \dfrac{1 - e^{-q \alpha_\lambda r}}{q \alpha_\lambda} \Big)^{1/q} &\text{ for } p \in (1,\infty],
			 \\
			 \qquad 1 &\text{ for } p = 1,
		\end{cases}
	\end{align*}
	where $q$ satisfies $\frac{1}{p} + \frac{1}{q} = 1$, $\beta_r(s) = e^{\alpha_\lambda(s-r)}$, and $\alpha_\lambda = \frac{2(1 - \lambda) }{(\Cequiv\Cpf)^2}$.

	Let $t > 0$ and suppose that $F \in L^{p^{\star}}(0,t)$ for some $p^{\star} \in [1,\infty]$, with $F(s) \geq 0$ for a.e. $s \in [0,t]$.
	Then, for $\errorSurrogate \in L^{\infty}(0,t; L^2(\domain))$, the estimate
	\begin{align}\label{eq:lplinfacc}
		\int_{0}^{t}
		&e^{\alpha_\lambda (s - t)}
		F(s)
		\norm{\errorSurrogate(s)}
		\d s
		\leq
		\Big(\accumulation{[1,\infty]}F\Big)
		\Big(\max_{s \in [0, t]} \norm{\errorSurrogate(s)}\Big),
	\end{align}
	holds, and $\errorSurrogate \in L^{2}(0,t; H^1(\domain))$ satisfies
	\begin{align}\label{eq:lpl2acc}
		\int_{0}^{t}
		&e^{\alpha_\lambda (s - t)}
		F(s)
		\triplenorm{\errorSurrogate(s)}
		\d s
		\leq
		\Big(\specialaccumulation{[2,\infty]}F\Big)
		\Big( \int_{0}^{t} e^{\alpha_\lambda (s - t)} \triplenorm{\errorSurrogate(s)}^2 \d s \Big)^{1/2}.
	\end{align}
\end{lemma}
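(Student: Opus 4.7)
The plan is to derive both estimates as compound applications of Hölder's inequality in time, after an explicit evaluation of the $L^q(0,r)$ norm of the exponential weight $\beta_r$. A direct calculation, treating the cases $q \in [1,\infty)$ and $q = \infty$ separately, yields $\|\beta_r\|_{L^q(0,r)} = ((1 - e^{-q \alpha_\lambda r})/(q \alpha_\lambda))^{1/q}$ in the former case and $\|\beta_r\|_{L^\infty(0,r)} = 1$ in the latter, which is precisely the accumulation weighting coefficient $c_{p,r}$ in the statement once $1/p + 1/q = 1$ is imposed.

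For the first estimate \eqref{eq:lplinfacc}, I would begin by pulling the spatial norm outside the integral using its essential supremum,
\[
\int_0^t e^{\alpha_\lambda(s-t)} F(s) \|\errorSurrogate(s)\| \d s \leq \Big(\max_{s \in [0,t]} \|\errorSurrogate(s)\|\Big) \int_0^t \beta_t(s) F(s) \d s,
\]
and then apply Hölder's inequality with conjugate exponents $p \in [1,\infty]$ and $q$ to bound the residual weighted integral by $c_{p,t} \|F\|_{L^{p}(0,t)}$. Since $p$ is free, the optimal choice produces precisely the accumulation quantity $\accumulation{[1,\infty]}F$.

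For the second estimate \eqref{eq:lpl2acc}, the right-hand side only controls $\triplenorm{\errorSurrogate}$ in a weighted $L^2$-in-time sense, so I would split the exponential weight symmetrically as $\beta_t(s) = \beta_t(s)^{1/2} \beta_t(s)^{1/2}$ and apply the Cauchy--Schwarz inequality,
\[
\int_0^t \beta_t(s) F(s) \triplenorm{\errorSurrogate(s)} \d s \leq \Big(\int_0^t \beta_t(s) F(s)^2 \d s\Big)^{1/2} \Big(\int_0^t \beta_t(s) \triplenorm{\errorSurrogate(s)}^2 \d s\Big)^{1/2}.
\]
The second factor already matches the target right-hand side. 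For the first, I would apply a further Hölder step to $F^2$ with exponents $p/2$ and its conjugate, valid for any $p \in [2,\infty]$, and identify the resulting weight factor with the corresponding $L^q$ norm of $\beta_t$. Optimising over $p \in [2,\infty]$ then produces the minimal accumulation $\specialaccumulation{[2,\infty]}F$; the restriction $p \geq 2$ here is the reason the admissible range differs from that in the first bound.

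The analytical content is elementary: both estimates are driven entirely by Hölder, Cauchy--Schwarz, and the exact $L^q$ norm of $\beta_t$. The main obstacle is purely bookkeeping, namely matching the final constants to the precise definitions of $\accumulation{\cdot}$ and $\specialaccumulation{\cdot}$ adopted in \cite{Sutton:2018ex}, and tracking how the infimum (or specific choice) over $p$ is encoded in those symbols so that the right-hand sides coincide with the claimed expressions.
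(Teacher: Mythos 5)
The paper does not prove this lemma itself but simply recalls it from~\cite[Lemma 4.9]{Sutton:2018ex}, and your argument is the standard one underlying that result: the explicit evaluation of $\norm{\beta_r}_{L^q(0,r)}$, pulling out the essential supremum plus H\"older with conjugate exponents $(p,q)$ for the first bound, and the symmetric splitting of the weight followed by Cauchy--Schwarz and a further H\"older step with exponent $p/2$ (whence the restriction $p\in[2,\infty]$) for the second. This is correct and essentially the same approach as the cited source, with only the bookkeeping of matching the resulting coefficients to the accumulation notation remaining, as you note.
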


We are now fully equipped to estimate the error in the $L^2(0,t; H^1(\domain))$ and $L^{\infty}(0,t; L^2(\domain))$ norms committed by the abstract non-hierarchical and inconsistent scheme~\eqref{eq:fullyDiscreteScheme}.

\begin{theorem}[Abstract a posteriori error estimates]\label{thm:reconstructionError}
	Let $u \in L^{\infty}(0,t; L^2(\domain)) \cap L^2(0,t; H^1(\domain))$ be the solution to~\eqref{eq:continuousProblem} 
	and let $U$ be the linear time reconstruction of the solution to the scheme~\eqref{eq:fullyDiscreteScheme} defined in~\eqref{eq:timeReconstructions}.
	Then, under the assumptions of Lemma~\ref{lem:individualTerms}, for a.e. $t \in (0,T]$ the error $\err(t) = u(t) - U(t)$ satisfies the $L^2(0,t; H^1(\domain))$ estimate
	\begin{align*}
		\Big( \int_{0}^{t} \triplenorm{\err(s)}^2 \d s \Big)^{1/2}
		&\leq
		C \Big(
			\norm{\err(0)}
			+
			\norm{\ellipreconest[H^1]}_{L^2(0,t)}
			+
			\norm{\dataest{S}}_{L^2(0,t)}
			+
			\min_{p \in \{1,2\}}
			\norm{\timeest}_{L^p(0,t)}
			\\&\qquad\qquad\qquad
			+
			\min_{p \in \{1,2\}}
			\norm{\meshchangeest}_{L^p(0,t)}
			+
			\min_{p \in \{1,2\}}
			\norm{\dataest{T}}_{L^p(0,t)}
		\Big),
	\end{align*}
	where the constant $C > 0$ depends only on $\Cpf$, and the $L^{\infty}(0,t; L^2(\domain))$ estimate
	\begin{align*}
	\max_{s \in [0,t]} \norm{\err(s)}
	&\leq
	C\Big(
		\norm{\err(0)}
		+
		\norm{\ellipreconest[L^2]}_{L^{\infty}(0,t)}
		+
\accumulation{[1,\infty]}
		\spaceest
		\\&\qquad\qquad
		+
		\accumulation{[1,\infty]}
		\timeest
		+
		\accumulation{[1,\infty]}
		\dataest{T}
		+
		\specialaccumulation{[2,\infty]} \dataest{S}
		\Big),
	\end{align*}
	where the constant $C > 0$ depends only on $\lambda$ from Lemma~\ref{lem:timeAccumulations}.
\end{theorem}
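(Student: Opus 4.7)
The plan is to follow the standard elliptic reconstruction strategy: split the error as $\err = \ctserr + \reconerr$ into its parabolic and elliptic components and estimate each separately, combining via the triangle inequality. The elliptic component is immediate: because $\reconerr(t) = \sum_{n=0}^{N} \lcurr(t)(\ellipReconLag{\curr}{\curr}{\ucurr} - \ucurr)$ is a linear-in-time interpolant with $\lcurr(t) \in [0,1]$, Assumption~\ref{item:ellipticEstimates:singleMesh} gives the pointwise bounds $\norm{\reconerr(t)} \leq \ellipreconest[L^2](t)$ and $\triplenorm{\reconerr(t)} \leq \ellipreconest[H^1](t)$, which account directly for the $\ellipreconest$-contributions in both estimates.

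For the parabolic component in $L^2(0,t; H^1)$, I would test the error equation~\eqref{eq:L2H1errorequation} with $v = \err$, using $\A(\ctserr, \err) = \triplenorm{\ctserr}^2 + \A(\ctserr, \reconerr)$ and $(\err_s, \err) = \tfrac{1}{2}\tfrac{d}{ds}\norm{\err}^2$. Young's inequality on the cross term absorbs half of the dissipation, leaving $\triplenorm{\reconerr}^2$ (controlled by $(\ellipreconest[H^1])^2$) on the right. Each right-hand side contribution from Lemma~\ref{lem:individualTerms} is then treated accordingly: the $\dataest{S}$ term pairs with $\triplenorm{\err}$ and is handled by Cauchy--Schwarz in time followed by Young absorption, yielding the $L^2$ accumulation; the $\timeest$, $\meshchangeest$, and $\dataest{T}$ terms pair with $\norm{\err}$ and admit two distinct treatments---Cauchy--Schwarz combined with the Poincar\'e--Friedrichs inequality~\eqref{eq:PoincareFriedrichs} and Young absorption (giving the $L^2$ accumulation), or $L^1$ accumulation paired with $\sup_s \norm{\err(s)}$ controlled through the simultaneously derived $L^\infty(L^2)$ bound. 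Taking the better of the two yields the $\min_{p\in\{1,2\}}$ displayed in the statement. A final triangle inequality to pass from $\int_0^t \triplenorm{\ctserr}^2$ to $\int_0^t \triplenorm{\err}^2$ then completes this part.

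For the parabolic component in $L^\infty(0,t; L^2)$, I would instead test~\eqref{eq:LinfL2errorequation} with $v = \ctserr$ to obtain $\tfrac{1}{2}\tfrac{d}{ds}\norm{\ctserr}^2 + \triplenorm{\ctserr}^2 = \text{RHS}$. Multiplying by the exponential weight $e^{\alpha_\lambda(s-t)}$ and invoking~\eqref{eq:PoincareFriedrichs} in the form $\alpha_\lambda \norm{\ctserr}^2 \leq 2(1-\lambda)\triplenorm{\ctserr}^2$ recasts the left-hand side as $\tfrac{d}{ds}[e^{\alpha_\lambda(s-t)}\norm{\ctserr}^2] + 2\lambda\, e^{\alpha_\lambda(s-t)}\triplenorm{\ctserr}^2$. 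Integrating over $(0,t)$ and applying Lemma~\ref{lem:individualTerms} to the right, the contributions pairing with $\norm{\ctserr}$ (namely $\spaceest$, $\timeest$, and $\dataest{T}$) are handled via the exponentially weighted accumulation~\eqref{eq:lplinfacc} of Lemma~\ref{lem:timeAccumulations}, producing the $L^{[1,\infty]}$ norms displayed, while the $\dataest{S}$ contribution pairing with $\triplenorm{\ctserr}$ is handled by the variant~\eqref{eq:lpl2acc} and absorbed into the $\lambda$-weighted dissipation on the left, producing the $L^{[2,\infty]}$ norm.

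The main obstacle is the careful bookkeeping of the exponentially weighted accumulations in the $L^\infty(L^2)$ argument. The $\dataest{S}$ contribution forces the use of the $L^{[2,\infty]}$ accumulation~\eqref{eq:lpl2acc}, which in turn dictates the introduction of the free parameter $\lambda \in (0,1)$ so that a strictly positive coefficient remains on $\triplenorm{\ctserr}^2$ after Young absorption. Tracking the Poincar\'e--Friedrichs constant, the parameter $\lambda$, and the exponential weighting simultaneously is the principal technical care required, and explains precisely why only $\dataest{S}$ carries the $L^{[2,\infty]}$ accumulation, while the other three estimators carry the typically smaller $L^{[1,\infty]}$ accumulation.
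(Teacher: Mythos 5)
Your overall strategy coincides with the paper's: the same splitting $\err = \ctserr + \reconerr$, testing \eqref{eq:L2H1errorequation} with $v=\err$ for the $L^2(H^1)$ bound and \eqref{eq:LinfL2errorequation} with $v=\ctserr$ for the $L^\infty(L^2)$ bound, the same use of Lemma~\ref{lem:individualTerms}, and the same exponential-weighting argument with Lemma~\ref{lem:timeAccumulations} (including the correct allocation of the $L^{[2,\infty]}$ accumulation to $\dataest{S}$ and the role of $\lambda$). The $L^\infty(L^2)$ part of your argument is essentially the paper's proof.

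The one step that does not work as you describe it is the $p=1$ option of the $\min_{p\in\{1,2\}}$ in the $L^2(H^1)$ estimate. You propose to pair $\timeest$, $\meshchangeest$ and $\dataest{T}$ with $\sup_{s}\norm{\err(s)}$ and to control that supremum ``through the simultaneously derived $L^{\infty}(L^2)$ bound''. If you invoke the theorem's second estimate there, the resulting $L^2(H^1)$ bound inherits its right-hand side — $\norm{\ellipreconest[L^2]}_{L^{\infty}(0,t)}$, the exponentially weighted accumulation of $\spaceest$, and a constant depending on $\lambda$ — none of which appear in the stated $L^2(H^1)$ estimate, whose constant is claimed to depend only on $\Cpf$. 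The paper avoids this entirely: because the test function is $\err$ (not $\ctserr$), the integrated energy inequality already carries $\norm{\err(t)}^2$ on its left-hand side, and since its right-hand side is non-decreasing in $t$ it also bounds $\max_{s\in[0,t]}\norm{\err(s)}^2$; the Young term $\delta \max_{s\in[0,t]}\norm{\err(s)}^2$ arising from the $L^1$-in-time treatment of $\term{}=\timeest+\dataest{T}+\meshchangeest$ is then absorbed \emph{within the same inequality}, with no appeal to the $L^\infty(L^2)$ result. With that self-contained absorption (and the triangle inequality $\triplenorm{\err}\le\triplenorm{\ctserr}+\triplenorm{\reconerr}$ to pass from $\ctserr$ to $\err$ and to absorb the residual $\delta\int_0^t\triplenorm{\err}^2\d s$ from the $p=2$ option), your argument reproduces the stated bound; as written, that single step would deliver a different, weaker-looking estimate.
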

\begin{proof}
	We begin with the $L^2(0,t; H^1(\domain))$ error estimate.
	Selecting $\testfn = \err$ in~\eqref{eq:L2H1errorequation} and applying the bounds of Lemma~\ref{lem:individualTerms} to the terms on the right-hand side provides
	\begin{align*}
		\frac{1}{2} \frac{d}{dt} \norm{\err(t)}^2
		+
		\triplenorm{\ctserr(t)}^2
		\leq
		\triplenorm{\reconerr(t)}
		\triplenorm{\ctserr(t)}
		+
		\dataest{S}(t)
		\triplenorm{\err(t)}
		+
			\term{}(t)
		\norm{\err(t)},
	\end{align*}
	where
		$\term{} = \timeest + \dataest{T} + \meshchangeest$.
	Applying the triangle inequality and Young's inequality $ab \leq \frac{1}{2\delta} a^2 + \frac{\delta}{2} b^2$, valid for any $a,b \geq 0, \delta > 0$, we deduce that
	\begin{align*}
		\triplenorm{\reconerr(t)}
		\triplenorm{\ctserr(t)}
		+
		\dataest{S}(t)
		\triplenorm{\err(t)}
		\leq
		\frac{1 + 2\delta^2}{2\delta} \triplenorm{\reconerr(t)}^2
		+
		\frac{3\delta}{2} \triplenorm{\ctserr(t)}^2
		+
		\frac{1}{2\delta} \dataest{S}(t)^2.
	\end{align*}
	Therefore, integrating over $s \in [0,t]$ for some $t \in [0,T]$, we find that
	\begin{align}\label{eq:L2H1parabolic:intermediateBound}
		\norm{\err(t)}^2
		+
		\gamma \int_{0}^{t} \triplenorm{\ctserr(s)}^2 \d s
		&
		\leq
		\norm{\err(0)}^2
		+
		\int_{0}^{t}
			(
			\xi \triplenorm{\reconerr(s)}^2
			+
			\frac{1}{\delta} \dataest{S}(s)
		+
		2
			\term{}(s)
			\norm{\err(s)}
			)
		\d s,
	\end{align}
	where $\gamma = 2 - 3 \delta$ and $\xi =	\frac{1 + 2\delta^2}{\delta}$.
	H\"older's inequality and~\eqref{eq:PoincareFriedrichs} give
	\begin{align*}
		\int_{0}^{t}
			\term{}(s)
			\norm{\err(s)}
		\d s
		&\leq
		\min \Big\{
			\max_{s \in [0,t]} \norm{\err(s)}
			\int_{0}^{t}
				\term{}(s)
			\d s,
		\\&\qquad\qquad
			\Cpf
			\Big(\int_{0}^{t}
				\triplenorm{\err(s)}^2
			\d s
			\int_{0}^{t}
				\term{}^2(s)
			\d s\Big)^{1/2}
		\Big\},
	\end{align*}
	to which we further apply Young's inequality and the triangle inequality to find that
	\begin{align*}
		2
		\int_{0}^{t}
			\term{}(s)
			\norm{\err(s)}
		\d s
		&
		\leq
		\min \Big\{
			\delta
			\max_{s \in [0,t]} \norm{\err(s)}^2
			+
			\frac{1}{\delta}
			\Big(
				\int_{0}^{t}
					\term{}(s)
				\d s
			\Big)^2,
			\\&\qquad\qquad\qquad\qquad
			\int_{0}^{t}
			(
				\frac{\Cpf}{\delta}
				\term{}^2(s)
				+
				\delta
				\triplenorm{\err(s)}^2
			)
			\d s
		\Big\}.
	\end{align*}
	Substituting this into~\eqref{eq:L2H1parabolic:intermediateBound} produces an estimate with a right-hand side which is a non-decreasing function of $t$, and consequently we deduce the bound
	\begin{align*}
		&\max_{s \in [0,t]} \norm{\err(s)}^2
		+
		\gamma \int_{0}^{t} \triplenorm{\ctserr(s)}^2 \d s
		\leq
		\norm{\err(0)}^2
		+
		\int_{0}^{t}
		(
			\xi \triplenorm{\reconerr(s)}^2
			+
			\frac{1}{\delta} \dataest{S}(s)
		)
		\d s
		\\&\qquad
		+
		\min \Big\{
			\delta
			 \max_{s \in [0,t]} \norm{\err(s)}^2
			+
			\frac{1}{\delta}
			\Big(
				\int_{0}^{t}
					\term{}(s)
				\d s
			\Big)^2,
			\int_{0}^{t}
			(
				\delta
				\triplenorm{\err(s)}^2
				+
				\frac{\Cpf}{\delta}
				\term{}^2(s)
			)
			\d s
		\Big\}.
	\end{align*}
	Upon simplification, this implies that there exists a constant $C > 0$ such that
	\begin{align*}
		C \norm{\ctserr}_{L^2(H^1)}
		&
		\leq
		\norm{\err(0)}
		+
		\Big(
		\int_{0}^{t}
		(
			\triplenorm{\reconerr(s)}^2
			+
			\dataest{S}(s)
		)
		\d s
		\Big)^{1/2}
		+
		\min_{p\in\{1,2\}}
				\norm{\term{}}_{L^p(0,t)},
	\end{align*}
	and the $L^2(0,t; H^1(\domain))$ estimate follows by recalling the elliptic reconstruction error estimates for $\reconerr$ and
	noting that we could have treated the terms of $\term{}$ separately (we combined them here for brevity).

	We now turn to the $L^{\infty}(0,t; L^2(\domain))$ estimate.
	For this, we argue as in~\cite[Lemma 4.10]{Sutton:2018ex}, although we recall the main points of the argument here for clarity.
	Selecting $\testfn = \ctserr$ in the error equation~\eqref{eq:LinfL2errorequation}, applying the individual bounds of Lemma~\ref{lem:individualTerms}, and introducing $
		\termalt{}
		=
		\spaceest
		+
		\timeest
		+
		\meshchangeest
		+
		\dataest{T}$
	produces the estimate
	\begin{align*}
		\frac{1}{2} \frac{d}{dt} \norm{\ctserr(t)}^2
		+
		\triplenorm{\ctserr(t)}^2
		\leq
		\termalt{}(t)
		\norm{\ctserr(t)}
		+
		\dataest{S}(t) \triplenorm{\ctserr(t)}.
	\end{align*}
	The Poincar\'e-Friedrichs inequality~\eqref{eq:PoincareFriedrichs} implies that, for any $\lambda \in [0,1]$,
	\begin{align*}
		\frac{1}{2} \frac{d}{dt} \norm{\ctserr(t)}^2
		+
		\alpha
		\norm{\ctserr(t)}^2
		+
		\lambda
		\triplenorm{\ctserr(t)}^2
		\leq
		\frac{1}{2} \frac{d}{dt} \norm{\ctserr(t)}^2
		+
		\triplenorm{\ctserr(t)}^2,
	\end{align*}
	with $\alpha \equiv \alpha_\lambda = \frac{2(1 - \lambda) }{\Cpf^2}$ (from Lemma~\ref{lem:timeAccumulations}; we omit the subscript for brevity), and hence
	\begin{align*}
		\frac{1}{2}\frac{d}{dt} \Big( e^{\alpha t} \norm{\ctserr(t)}^2 \Big) + \lambda e^{\alpha t} \triplenorm{\ctserr(t)}^2 \leq
			e^{\alpha t}
			\termalt{}(t) \norm{\ctserr(t)}
			+
			e^{\alpha t}
			\dataest{S}(t)
			\triplenorm{\ctserr(t)}.
	\end{align*}
	Integrating over $s \in [0, t]$, multiplying both sides by $e^{-\alpha t}$, 
	and invoking Lemma~\ref{lem:timeAccumulations} with $\xi = \ctserr$
	we thus obtain
	\begin{align*}
		\frac{1}{2} \norm{\ctserr(t)}^2
		 +
		 \lambda
		 \int_{0}^{t}  e^{\alpha(s-t)} \triplenorm{\ctserr(s)}^2 \d s
		&
		\leq
		\frac{1}{2} e^{-\alpha t} \norm{\ctserr(0)}^2
		+
		\max_{s \in [0,t]} \norm{\ctserr(s)}
			\accumulation{[1,\infty]} \termalt{}
		\\&\qquad
			+
			\Big(\int_{0}^{t} e^{\alpha(s-t)} \triplenorm{\ctserr(s)}^2 \d s\Big)^{1/2}
			\specialaccumulation{[2,\infty]} \dataest{S}.
	\end{align*}
	We next
  apply Young's inequality with
	$\delta = 2\lambda$ to the final two factors,
	 obtaining
	\begin{align*}
		&\norm{\ctserr(t)}^2
		\leq
		\norm{\ctserr(0)}^2
		+
		2
		\max_{s \in [0,t]} \norm{\ctserr(s)}
			\accumulation{[1,\infty]} \termalt{}
			+
			\frac{2}{\lambda}
			\Big(
			\specialaccumulation{[2,\infty]} \dataest{S}\Big)^{2},
	\end{align*}
  since $e^{-\alpha t} \leq 1$.
	The right-hand side of this bound is a non-decreasing function of $t$, and thus, as before, we deduce that it provides a bound on $\max_{s \in [0,t]} \norm{\ctserr(s)}^2$.
	Applying Young's inequality again,
	we find that
	\begin{align*}
		&
		\max_{s \in [0,t]} \norm{\ctserr(s)}
		\leq
		\max\Big\{1, \sqrt{\frac{2}{\lambda}}\Big\}
		\Big(
		\norm{\ctserr(0)}
		+
		\accumulation{[1,\infty]} \termalt{}
		+
		\specialaccumulation{[2,\infty]} \dataest{S}
		\Big).
	\end{align*}
	The $L^{\infty}(0,t; L^2(\domain))$ error estimate follows from the elliptic reconstruction estimates and noting, as before, that we could have considered the terms of $\termalt{}$ separately.
	We note that the dependence of the weightings $c_{p,t}$ on $\lambda$ means that the optimal value of $\lambda$ is not clear, and depends on the time $t$.
\end{proof}

\subsection{Residual-type elliptic error estimates}\label{sec:abstract:ellipticEstimators}

We now derive residual-type elliptic estimators satsifying Assumptions~\ref{item:ellipticEstimates:singleMesh} (in Lemma~\ref{lem:residualEstimates:singleMesh})
and~\ref{item:ellipticEstimates:twoMesh} (in Lemmas~\ref{lem:residualEstimates:twoMesh:local}
and~\ref{lem:residualEstimates:twoMesh:global}).
The key developments here are the estimates satisfying Assumption~\ref{item:ellipticEstimates:twoMesh}. 
In particular,  Lemmas~\ref{lem:residualEstimates:twoMesh:local}
and~\ref{lem:residualEstimates:twoMesh:global} are best suited to the case of local or global mesh modification, respectively, although neither result requires any particular compatibility between the spaces.

For vector-valued quantities which may be discontinuous across the mesh skeleton, we
define the \emph{jump operator} $\jump{\cdot}$ across a mesh interface $\side \in \sides[\curr]$ as
\begin{align}
	\jump{\vec{v}}|_{\side} =
	\begin{cases}
		\vec{v}^+ \cdot \vec{n}_{\side}^{+} + \vec{v}^- \cdot \vec{n}_{\side}^{-}
		&\text{ if } \side \cap \boundary = \emptyset,
		\\
		0
		&\text{ otherwise},
	\end{cases}
\end{align}
with the following notation: if $\side \cap \boundary = \emptyset$, then there exist $\E[+], \E[-] \in \mesh[\curr]$ such that $\side \subset \partial\E[+]\cap\partial\E[-]$;
the trace of the function $\vec{v}$ on $\side$ from within $\E[\pm]$ is therefore denoted by $\vec{v}^{\pm}$, and $\vec{n}_\side^{\pm}$ denotes the unit outward normal on $\side$ with respect to $\E[\pm]$.
We may now introduce the following residual operators which form the basis of the error estimates in this section.

\begin{definition}[Residuals]
	Let $m, \curr \in \{ 0,\dots,N \}$. For $\trialcurr \in \Vh[\curr]$, let $\jumpresidual[\curr] : \Vh[\curr] \to L^2(\sides[\curr])$  denote the \emph{jump residual operator}
	\begin{align*}
		(\jumpresidual[\curr] \trialcurr)|_{\side}
		=
		\jump{\diff \nabla \trialcurr}_{\side},
	\end{align*}
	the definition of which is extended by zero to the whole of $\domain$.
	Further, let $\elresidualLag{\curr}{m} : \Vh[\curr] \to L^2(\domain)$ denote the \emph{element residual operator}, defined as
	\begin{align*}
		(\elresidualLag{\curr}{m}{\trialcurr} )|_{\E}
		=
		\diffop (\trialcurr |_{\E}) - (\ellipReconRHSLag{\curr}{m}{\trialcurr})|_{\E}
		=
		\diffop (\trialcurr |_{\E}) - (\diffoph[\curr] \trialcurr + (\LTwoRecon[\curr] - \identity) \brokenProj[\curr] \force[m])|_{\E}
	\end{align*}
	for each $\E \in \mesh[\curr]$, where $\ellipReconRHSLag{\curr}{m}{}$ is from Definition~\ref{def:fullydiscrete:ellipticReconstruction}.
	We emphasise that the superscript denotes the discrete space the operator acts on while the subscript indicates the time-step at which the PDE data is evaluated.
\end{definition}

The following approximation properties are required for the estimates.

\begin{assumption}[Approximation properties of the discrete framework]\label{assump:approximationProperties}
We suppose that the components of the discrete framework specified in Assumption~\ref{assump:abstractFramework} satisfy:
\begin{enumerate-assumption}
	\item \label{item:approximationProperties:interpolationEstimates}
		There exists $\Cinterp > 0$, depending only on the regularity of the mesh, such that:
		\begin{itemize}
			\item For any $\trialfn \in H^{1}(\domain)$, there exists a function $\trialcurr_{\clement} \in \Vh[\curr]$ satisfying
					\begin{equation}\label{eq:quasiInterpolation}
						\!\!\norm{\trialfn - \trialcurr_{\clement}}_{0,\E} + h_{\E} \abs{\trialfn - \trialcurr_{\clement}}_{1,\E} \leq \Cclem h_{\E} \abs{\trialfn}_{1,\patch{\E}}
						\text{ for all }
						\E \in \mesh[\curr],
					\end{equation}
					where $\patch{\E}$ denotes the usual finite element patch relative to $\E$ consisting of elements sharing a vertex with $\E$.
			\item For any $\trialfn \in H^{2}(\domain)$, there exists a function $\trialcurr_{\interp} \in \Vh[\curr]$ satisfying
					\begin{equation}\label{eq:interpolationEstimate}
						\!\!\norm{\trialfn - \trialcurr_{\interp}}_{0,\E} + h_{\E} \abs{\trialfn - \trialcurr_{\interp}}_{1,\E} \leq \Cinterp h_{\E}^{2} \abs{\trialfn}_{2, \E}
						\text{ for all }
						\E \in \mesh[\curr],
					\end{equation}
					constructed locally such that if $\mesh[m] \ni \E \in \mesh[\curr]$ then $\trialcurr_{\interp}|_{\E} = \trialfn^m_{\interp}|_{\E}$.
			\end{itemize}
		\item \label{item:approximationProperties:inconsistencyEstimate}
			There exist \emph{inconsistency estimators} $\projErrorLTwo[\E], \projErrorEnergy[\E] : \Vh[\E] \to \Re$ such that
			\begin{align*}
				\inconsistencym[\E](w^{\curr}, \testcurr) \leq (\projErrorLTwo[\E] w^{\curr}) (\projErrorLTwo[\E] \testcurr)
				\quad \text{ and } \quad
				\inconsistencya[\E](w^{\curr}, \testcurr) \leq (\projErrorEnergy[\E] w^{\curr}) (\projErrorEnergy[\E] \testcurr),
			\end{align*}
			for $\trialcurr, \testcurr \in \Vh[\curr]$. Moreover, if $\trialfn \in H^{2}(\domain)$ and $\trialfn_{\clement}, \trialfn_{\interp} \in \Vh[\curr]$ satisfy~\eqref{eq:quasiInterpolation} and~\eqref{eq:interpolationEstimate} respectively, then
			\begin{align*}
				\projErrorLTwo[\E] \trialfn_{\clement} + h_{\E} \projErrorEnergy[\E] \trialfn_{\clement} \leq \Cincons h_{\E} \abs{\trialfn}_{1, \patch{\E}}
				 \text{ and }
					\projErrorLTwo[\E] \trialfn_{\interp} + h_{\E} \projErrorEnergy[\E] \trialfn_{\interp} \leq \Cincons h_{\E}^2 \abs{\trialfn}_{2, \E},
			\end{align*}
			where $\Cincons>0$ only depends on $\Cinterp$, the problem data, and the regularity of $\mesh[\curr]$.
	\end{enumerate-assumption}
\end{assumption}

These inconsistency estimators provide the following estimates for the inconsistency of the global discrete bilinear forms, which we record here.
\begin{lemma}[Global inconsistency estimate]\label{lem:globalInconsistencyEstimate}
	Let $\trialcurr \in \Vh[\curr]$ and let $X$ denote either $L^2$ or $\A$.
	For $r \in \Re$, we denote weighted global inconsistency estimates by
	\begin{align*}
		\projError{X}{\curr} (h_{\curr}^r, \trialcurr) = \Big(\sum_{\E \in \mesh[\curr]} (h_{\E}^r \projError{X}{\E} \trialcurr)^2 \Big)^{1/2},
	\end{align*}
	abbreviated to $\projError{X}{\curr} (\trialcurr) $ when $r=0$.
	If $\testfn \in H^{2}(\domain)$ and $\testfn_{\clement}, \testfn_{\interp} \in \Vh[\curr]$ satisfy~\eqref{eq:quasiInterpolation} and~\eqref{eq:interpolationEstimate} respectively,
	then there exists a constant $\Cglob = \alpha \Cincons$, where $\alpha$ only depends on the regularity of $\mesh[\curr]$, such that
	\begin{align*}
		\inconsistencya[\curr](\trialcurr, \testfn_{\clement})
		\leq
		\Cglob
		\projErrorEnergy[\curr](\trialcurr)
		\abs{\testfn}_{1}
			&\quad\text{ and } \quad
		\inconsistencya[\curr](\trialcurr, \testfn_{\interp})
		\leq
		\Cglob
		\projErrorEnergy[\curr](h_{\curr}, \trialcurr)
		\abs{\testfn}_{2},
		\\
		\inconsistencym[\curr](\trialcurr, \testfn_{\clement})
		\leq
		\Cglob
		\projErrorLTwo[\curr](h_{\curr}, \trialcurr)
		\abs{\testfn}_{1}
			&\quad\text{ and } \quad
		\inconsistencym[\curr](\trialcurr, \testfn_{\interp})
		\leq
		\Cglob
		\projErrorLTwo[\curr](h_{\curr}^2, \trialcurr)
		\abs{\testfn}_{2}.
	\end{align*}
\end{lemma}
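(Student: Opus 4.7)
The plan is a direct decomposition into element contributions followed by Cauchy--Schwarz with a carefully chosen $h$-splitting. Both inconsistency forms are defined globally as differences of global forms, but since $\mh[\curr]$ and $\Ah[\curr]$ are assembled elementwise from $\m[\curr]_{\E}$ and $\A[\curr]_{\E}$ (Assumption~\ref{item:abstractFramework:bilinearForms}), we may write
\begin{equation*}
	\inconsistencya[\curr](\trialcurr, \testfn) = \sum_{\E \in \mesh[\curr]} \inconsistencya[\E](\trialcurr, \testfn),
	\qquad
	\inconsistencym[\curr](\trialcurr, \testfn) = \sum_{\E \in \mesh[\curr]} \inconsistencym[\E](\trialcurr, \testfn),
\end{equation*}
so that the local estimates of Assumption~\ref{item:approximationProperties:inconsistencyEstimate} immediately apply on each element.

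Next, I apply the local inconsistency bounds to each summand and then the discrete Cauchy--Schwarz inequality, inserting a factor $h_{\E}^{-r} \cdot h_{\E}^{r}$ to distribute the mesh size between the two arguments. Concretely, for the energy inconsistency tested against $\testfn_{\interp}$,
\begin{equation*}
	\inconsistencya[\curr](\trialcurr, \testfn_{\interp})
	\leq
	\sum_{\E} (h_{\E} \projErrorEnergy[\E] \trialcurr)\, (h_{\E}^{-1} \projErrorEnergy[\E] \testfn_{\interp})
	\leq
	\projErrorEnergy[\curr](h_{\curr}, \trialcurr) \Big( \sum_{\E} (h_{\E}^{-1} \projErrorEnergy[\E] \testfn_{\interp})^2 \Big)^{1/2},
\end{equation*}
and the bound $h_{\E} \projErrorEnergy[\E] \testfn_{\interp} \leq \Cincons h_{\E}^2 \abs{\testfn}_{2,\E}$ from Assumption~\ref{item:approximationProperties:inconsistencyEstimate} reduces the second factor to $\Cincons \abs{\testfn}_2$ (here the patch overlap is trivial, $\alpha=1$, since we only sum over single elements). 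The three remaining cases follow the same template with the following weight choices: $(h_{\E}^0, h_{\E}^0)$ for $\inconsistencya(\cdot,\testfn_{\clement})$, $(h_{\E}, h_{\E}^{-1})$ for $\inconsistencym(\cdot,\testfn_{\clement})$, and $(h_{\E}^2, h_{\E}^{-2})$ for $\inconsistencym(\cdot,\testfn_{\interp})$, selected precisely so that the test-function factor can be completely discharged using the corresponding bound from Assumption~\ref{item:approximationProperties:inconsistencyEstimate}.

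In the two cases involving $\testfn_{\clement}$, the test-function bound only controls the patch seminorm $\abs{\testfn}_{1,\patch{\E}}$, so after discrete Cauchy--Schwarz one is left with $(\sum_{\E} \abs{\testfn}_{1,\patch{\E}}^2)^{1/2}$. This is absorbed into $\alpha \abs{\testfn}_1$, where $\alpha$ is the maximum overlap number of the patches $\{ \patch{\E} \}_{\E \in \mesh[\curr]}$; this overlap is uniformly bounded thanks to the mesh regularity controlled by $\meshReg$ in Assumption~\ref{item:abstractFramework:mesh}. Setting $\Cglob = \alpha \Cincons$ then yields all four stated inequalities in a uniform way.

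There is really no essential obstacle here: the proof is routine once the correct $h_{\E}$ splitting is identified in the Cauchy--Schwarz step. The only minor point to watch is that the same constant $\Cglob$ must serve all four estimates, which forces us to take the worst patch-overlap factor (trivially $\alpha \geq 1$ covers the cases with no patch overlap as well), and to keep track that $\Cincons$ already packages any dependence on $\Cinterp$, the problem data (e.g.\ $\ellipUpper$, $\reacLower$), and local mesh regularity, as specified in Assumption~\ref{item:approximationProperties:inconsistencyEstimate}.
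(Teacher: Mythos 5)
Your proof is correct and is exactly the standard argument the paper leaves implicit (the lemma is stated without proof): split the global inconsistency elementwise, apply the local bounds of Assumption~\ref{item:approximationProperties:inconsistencyEstimate}, redistribute powers of $h_{\E}$ before discrete Cauchy--Schwarz, and absorb the finite overlap of the patches $\patch{\E}$ into $\alpha$. The bookkeeping of the four weight choices and of the single constant $\Cglob=\alpha\Cincons$ is handled correctly.
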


Equipped with these basic components, the following single mesh elliptic estimators may be derived using standard techniques developed for finite element methods (accounting for this inconsistent setting through Lemmas~\ref{lem:ellipreconInconsistency} and~\ref{lem:globalInconsistencyEstimate}).

\begin{lemma}[Single mesh residual-type error estimators]\label{lem:residualEstimates:singleMesh}
	There exists a positive constant $\Cellip$ depending only on the regularity of the domain and the mesh geometry so that Assumption~\ref{item:ellipticEstimates:singleMesh} is satisfied with
	\begin{align*}
		\ellipest{L^2}^{\curr}(\trialcurr,
		 \force[\curr])
		&=
		\Cellip
		\big(
			\norm{h_{\curr}^2 \elresidualLag{\curr}{\curr}{\trialcurr}}^2
			+
			\norm{h_{\curr}^{3/2} \jumpresidual[\curr] \trialcurr }_{\sides[\curr]}^2
			+
			\inconGroup[\curr](\trialcurr, \force[\curr])^2
		\big)^{1/2},
		\\
		\ellipest{H^1}^{\curr}(\trialcurr,
		\force[\curr])
		&=
		\Cellip
		\big(
			\norm{h_{\curr} \elresidualLag{\curr}{\curr}{\trialcurr}}^2
			+
			\norm{h_{\curr}^{1/2} \jumpresidual[\curr] \trialcurr }_{\sides[\curr]}^2
			+
			\inconGroupEnergy[\curr](\trialcurr, \force[\curr])^2
		\big)^{1/2}.
	\end{align*}
	where $\inconGroup[\curr]$ and $\inconGroupEnergy[\curr]$ respectively represent the inconsistency terms
	\begin{align*}
		\inconGroup[\curr](\trialcurr, \force[\curr])
		&=
		\big(
			\projErrorLTwo[\curr](h_{\curr}^2, \diffoph[\curr] \trialcurr + \forceh[\curr])^2
			+
			\projErrorEnergy[\curr](h_{\curr}, \trialcurr)^2
		\big)^{1/2},
		\\
		\inconGroupEnergy[\curr](\trialcurr, \force[\curr])
		&=
		\big(
			\projErrorLTwo[\curr](h_{\curr}, \diffoph[\curr] \trialcurr + \forceh[\curr])^2
			+
			\projErrorEnergy[\curr](\trialcurr)^2
		\big)^{1/2}.
	\end{align*}
\end{lemma}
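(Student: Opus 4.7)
The plan is to follow the standard residual-based a posteriori analysis for elliptic problems, adapted to the inconsistent framework by means of Lemma~\ref{lem:ellipreconInconsistency} (in place of Galerkin orthogonality) and Lemma~\ref{lem:globalInconsistencyEstimate} (to absorb the inconsistency terms at the correct order). Write $e := \trialcurr - \ellipReconLag{\curr}{\curr}{\trialcurr} \in H^1_0(\domain)$. Elementwise integration by parts, combined with the identity $\A(\ellipReconLag{\curr}{\curr}{\trialcurr}, v) = -(\ellipReconRHSLag{\curr}{\curr}{\trialcurr}, v)$ from Definition~\ref{def:fullydiscrete:ellipticReconstruction}, will yield the residual representation
\begin{align*}
  \A(e, v) = -(\elresidualLag{\curr}{\curr}{\trialcurr}, v) + (\jumpresidual[\curr] \trialcurr, v)_{\sides[\curr]}
\end{align*}
valid for every $v \in H^1_0(\domain)$. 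On the other hand, for discrete test functions $\testcurr \in \Vh[\curr]$, Lemma~\ref{lem:ellipreconInconsistency} supplies the ``quasi-orthogonality'' relation
\begin{align*}
  \A(e, \testcurr) = -\inconsistencya[\curr](\trialcurr, \testcurr) - \inconsistencym[\curr](\diffoph[\curr]\trialcurr + \forceh[\curr], \testcurr).
\end{align*}

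For the energy estimate I would take $v = e$ in the first identity, insert a Cl\'ement-type quasi-interpolant $e_\clement \in \Vh[\curr]$ of $e$ via the split $e = (e - e_\clement) + e_\clement$, and use the two identities in tandem to obtain
\begin{align*}
  \triplenorm{e}^2 = -(\elresidualLag{\curr}{\curr}{\trialcurr}, e - e_\clement) + (\jumpresidual[\curr]\trialcurr, e - e_\clement)_{\sides[\curr]} - \inconsistencya[\curr](\trialcurr, e_\clement) - \inconsistencym[\curr](\diffoph[\curr]\trialcurr + \forceh[\curr], e_\clement).
\end{align*}
Cauchy--Schwarz, the approximation bounds~\eqref{eq:quasiInterpolation} (and the usual scaled trace inequality for the side terms, giving the factor $h_E^{1/2}$), Lemma~\ref{lem:globalInconsistencyEstimate} applied with $\testfn_\clement = e_\clement$ to produce the $\projErrorEnergy[\curr](\trialcurr)$ and $\projErrorLTwo[\curr](h_\curr, \diffoph[\curr]\trialcurr + \forceh[\curr])$ factors, and finally the norm equivalence~\eqref{eq:normEquivalence} to absorb $\abs{e}_1$ into $\triplenorm{e}$, will then yield the $H^1$ bound after dividing by $\triplenorm{e}$.

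For the $L^2$ bound I would appeal to an Aubin--Nitsche duality argument: let $\phi \in H^1_0(\domain)$ solve $\A(\phi, v) = (e, v)$ for all $v \in H^1_0(\domain)$, which by elliptic regularity on the convex polytope $\domain$ satisfies $\abs{\phi}_2 \leq C \norm{e}$. Testing with $v = e$ and splitting $\phi = (\phi - \phi_\interp) + \phi_\interp$ with the Lagrange-type interpolant $\phi_\interp \in \Vh[\curr]$ from~\eqref{eq:interpolationEstimate} gives
\begin{align*}
  \norm{e}^2 = \A(e,\phi) = \A(e, \phi - \phi_\interp) + \A(e, \phi_\interp),
\end{align*}
after which the same two identities as before control each piece: the first summand acquires one extra power of $h_\curr$ on each residual factor (yielding $h_\curr^2$ and $h_\curr^{3/2}$ weights), while the second summand is bounded using the $\phi_\interp$-variants in Lemma~\ref{lem:globalInconsistencyEstimate}, giving exactly the weighted inconsistency terms $\inconGroup[\curr](\trialcurr, \force[\curr])$ appearing in $\ellipest{L^2}^{\curr}$. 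Dividing through by $\norm{e}$ delivers the stated bound.

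The only genuine subtlety I anticipate is the careful bookkeeping of powers of $h_\curr$ on the inconsistency contributions: one must match the Cl\'ement/Lagrange quasi-interpolation error being used on the test side with the appropriate version of Lemma~\ref{lem:globalInconsistencyEstimate} so that the combined residual and inconsistency estimators carry the same mesh-weight and hence the optimal scaling. Once this correspondence is established, the remainder is routine application of Cauchy--Schwarz, the standard trace inequality, and the norm equivalence~\eqref{eq:normEquivalence} to collect constants into a single $\Cellip$ depending only on the domain and mesh regularity.
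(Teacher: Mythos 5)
Your proposal is correct and follows exactly the route the paper intends: the paper gives no detailed proof, stating only that these estimators follow from standard residual-based techniques adapted to the inconsistent setting via Lemmas~\ref{lem:ellipreconInconsistency} and~\ref{lem:globalInconsistencyEstimate}, which is precisely your Cl\'ement-split energy argument and Aubin--Nitsche duality with the interpolant from~\eqref{eq:interpolationEstimate}. The only blemish is the sign in your quasi-orthogonality relation (Lemma~\ref{lem:ellipreconInconsistency} gives $+\inconsistencya[\curr]$ and $+\inconsistencym[\curr]$ for $\A(\trialcurr-\ellipReconLag{\curr}{\curr}{\trialcurr},\testcurr)$), which is immaterial since those terms are bounded in absolute value.
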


\subsubsection{Time derivative estimator for local mesh modification}
For the time derivative estimator, we first present a result which is best suited for the case when each mesh in the sequence is constructed from its predecessor by modifying (e.g. coarsening or refining) a relatively small number of elements. In this situation, it is meaningful to exploit the equivalence of the interpolant satisfying~\eqref{eq:interpolationEstimate} in regions where there is no mesh change.

\begin{lemma}[Interpolation estimate for locally modified meshes]\label{lem:meshCompatibility}
	Let $n,m \in \{0,\dots,N\}$ and $F \in L^2(\domain)$.
	For $\testfn \in H^2(\domain)$, let $\testfn^n \in \Vh[n]$ and $\testfn^m \in \Vh[m]$ be the interpolants of $\testfn$
	satisfying~\eqref{eq:interpolationEstimate}.
	Then,
	\begin{align*}
		(F, \testfn^n - \testfn^m)
		\leq
		2\Cclem
		\norm{\hat{h}_{n,m}^2 F}_{\mesh[n] \setminus \mesh[m]}
		\abs{\testfn}_2,
		\quad\text{where} \quad
		\norm{F}_{\mesh[n] \setminus \mesh[m]}^2 = \sum_{\E[n] \in \mesh[n]\setminus\mesh[m]} \norm{F}_{\E[n]}^2,
	\end{align*}
	and $\hat{h}_{n,m}(x) := \max\{ h_n(x), h_m(x)\}$.
	Here, $\mesh[n] \setminus \mesh[m] = \{ \E\in\mesh[n] : \E \not \in \mesh[m] \}$.
\end{lemma}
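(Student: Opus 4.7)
The plan is to exploit the locality assumption built into~\eqref{eq:interpolationEstimate} (namely, that the interpolants coincide on elements shared by both meshes) in order to restrict the integral to the symmetric difference of $\mesh[n]$ and $\mesh[m]$, and then apply the elementwise interpolation estimate on each side of this symmetric difference.

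First, I would observe that by the locality clause in Assumption~\ref{item:approximationProperties:interpolationEstimates}, if $E\in\mesh[n]\cap\mesh[m]$ then $\testfn^n|_E = \testfn^m|_E$. Hence $\testfn^n - \testfn^m$ vanishes identically on every element common to both meshes, and so the integrand $F(\testfn^n - \testfn^m)$ is supported on the set
$$\Omega_\triangle := \bigcup_{E\in\mesh[n]\setminus\mesh[m]} E = \bigcup_{E\in\mesh[m]\setminus\mesh[n]} E,$$
the equality of the two unions following since $\mesh[n]$ and $\mesh[m]$ both partition $\domain$. This reduction is the crux of the argument: it is what makes the right-hand side of the estimate involve only norms over the modified region, and it is the step I expect to be the main (conceptual) obstacle, even though it is just a direct reading of the assumption.

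With this in hand, I would split $\testfn^n - \testfn^m = (\testfn^n - \testfn) - (\testfn^m - \testfn)$ on $\Omega_\triangle$ and estimate each term separately. For the first piece, elementwise Cauchy-Schwarz combined with~\eqref{eq:interpolationEstimate} gives, for $E\in\mesh[n]\setminus\mesh[m]$,
$$\int_E F(\testfn^n - \testfn)\,\d x \le \norm{F}_{0,E}\norm{\testfn - \testfn^n}_{0,E} \le \Cclem\, h_E^2 \norm{F}_{0,E} \abs{\testfn}_{2,E}.$$
Since $h_E \le \hat h_{n,m}(x)$ pointwise on $E$, the factor $h_E^2\norm{F}_{0,E}$ is bounded by $\norm{\hat h_{n,m}^2 F}_{0,E}$. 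Summing over $E\in\mesh[n]\setminus\mesh[m]$ and applying discrete Cauchy-Schwarz then yields
$$\int_{\Omega_\triangle} F(\testfn^n - \testfn)\,\d x \le \Cclem\norm{\hat h_{n,m}^2 F}_{\mesh[n]\setminus\mesh[m]} \abs{\testfn}_{2,\Omega_\triangle}.$$

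An identical argument applied to $\testfn^m - \testfn$, using the partition $\mesh[m]\setminus\mesh[n]$ instead, produces the same bound, since the $L^2$ norm of $\hat h_{n,m}^2 F$ over the set $\Omega_\triangle$ is the same whether we sum over elements of $\mesh[n]\setminus\mesh[m]$ or of $\mesh[m]\setminus\mesh[n]$. Adding the two estimates and using $\abs{\testfn}_{2,\Omega_\triangle} \le \abs{\testfn}_{2}$ gives the claimed bound with constant $2\Cclem$.
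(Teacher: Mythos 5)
Your proposal is correct and follows essentially the same route as the paper: restrict to the symmetric difference using the locality clause of the interpolation operator, split via $\pm\testfn$, apply~\eqref{eq:interpolationEstimate} elementwise with $h_{\E}\leq\hat h_{n,m}$, and use that $\mesh[n]\setminus\mesh[m]$ and $\mesh[m]\setminus\mesh[n]$ partition the same region. Your elementwise Cauchy--Schwarz plus discrete Cauchy--Schwarz accounting is just a slightly more explicit writing of the paper's final step.
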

\begin{proof}
	By Assumption~\ref{item:approximationProperties:interpolationEstimates}, $\testfn^n - \testfn^m = 0$ by construction on elements $\E \not \in \mesh[n] \setminus \mesh[m]$, and therefore
	\begin{align*}
		(F, \testfn^n - \testfn^m)
		&=
		\sum_{\E[n] \in \mesh[n] \setminus \mesh[m]}
			(F, \testfn^n - \testfn^m)_{\E[n]}
			\\
		&=
		\sum_{\E[n] \in \mesh[n] \setminus \mesh[m]}
			(F, \testfn^n - \testfn)_{\E[n]}
		+
		\sum_{\E[m] \in \mesh[m] \setminus \mesh[n]}
			(F, \testfn - \testfn^m)_{\E[m]}.
	\end{align*}
	The result now follows from~\eqref{eq:interpolationEstimate} and the fact that
		$\bigcup_{\E \in \mesh[n] \setminus \mesh[m]}
		\E
		=
		\bigcup_{\E \in \mesh[m] \setminus \mesh[n]}
		\E$,
	i.e. both sets of elements form partitions of the same parts of the domain $\domain$.
\end{proof}

\begin{lemma}[Local mesh modification time derivative estimator]\label{lem:residualEstimates:twoMesh:local}
	Assumption~\ref{item:ellipticEstimates:twoMesh} is satisfied by the \emph{local mesh modification time derivative estimator}
	\begin{align}\label{eq:twoMesh:localBound}
		\ellipest{L^2}^{\partial_t}(\trialcurr, \trialprev,
		 \force[\curr], \force[\prev])
		&=
		\frac{1}{\timestep^{\curr}}
		\Big(
		\norm{\transfer[\curr] \trialprev - \trialprev}^2
		\\&\qquad\quad
		+\Cellip
		\Big(
			\norm{h_{\curr}^2 (\elresidualLag{\curr}{\curr}{\trialcurr} - \elresidualLag{\prev}{\prev}{\trialprev})}^2
			 \notag
		\\&\qquad\qquad\qquad\quad
			+
			\norm{h_{\curr}^{3/2} (\jumpresidual[\curr] \trialcurr - \jumpresidual[\prev] \trialprev)}_{\sides[\curr] \cup \sides[\prev]}^2
		\notag
		\\&\qquad\qquad\qquad\quad
			+
			\norm{\hat{h}_{\prev, \curr}^2 \elresidualLag{\prev}{\prev}{\trialprev}}_{\mesh[\prev] \setminus \mesh[\curr]}^2
		\notag
		\\&\qquad\qquad\qquad\quad
			+
			\norm{\hat{h}_{\prev, \curr}^{3/2} \jumpresidual[\prev] \trialprev}_{\sides[\prev] \setminus \sides[\curr]}^2
		\notag
		\\&\qquad\qquad\qquad\quad
		+
    	\inconGroup[\curr](\trialcurr, \force[\curr])^2
	    +
	    \inconGroup[\prev](\trialprev, \force[\prev])^2
		\Big)\Big)^{1/2},
		\notag
	\end{align}
	where the constant $\Cellip>0$ only depends on the regularity of the domain and mesh, $\hat{h}_{\prev, \curr}$ is defined in Lemma~\ref{lem:meshCompatibility}, and $\inconGroup[\curr]$ is defined in Lemma~\ref{lem:residualEstimates:singleMesh}.
\end{lemma}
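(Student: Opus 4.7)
The plan is to reduce the claim to bounding the $L^2$ norm of the elliptic reconstruction difference $w := (\trialcurr - \ellipReconLag{\curr}{\curr}{\trialcurr}) - (\trialprev - \ellipReconLag{\prev}{\prev}{\trialprev})$ by an $L^2$-duality argument, and then to use elementwise integration by parts together with the overlap Lemma~\ref{lem:meshCompatibility} to recover the localised structure of~\eqref{eq:twoMesh:localBound}. Since $\trialfn^{\ellipRecon}_t(t) = (\ellipReconLag{\curr}{\curr}{\trialcurr} - \ellipReconLag{\prev}{\prev}{\trialprev})/\timestep^{\curr}$ on $(t^{\prev}, t^{\curr}]$, inserting $\pm \trialprev$ inside the norm and applying the triangle inequality yields
\begin{align*}
\norm{\timederivh[\curr] \trialcurr - \trialfn^{\ellipRecon}_t(t)}
\leq
\tfrac{1}{\timestep^{\curr}} \norm{\transfer[\curr] \trialprev - \trialprev}
+ \tfrac{1}{\timestep^{\curr}} \norm{w},
\end{align*}
so the first term of~\eqref{eq:twoMesh:localBound} is accounted for, and the remainder of the argument is devoted to bounding $\norm{w}$.

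For the duality step, I would let $z \in H^1_0(\domain) \cap H^2(\domain)$ solve the dual problem $\A(v,z) = (w, v)$ for all $v \in H^1_0(\domain)$, which satisfies $\abs{z}_2 \lesssim \norm{w}$ by elliptic regularity on the convex polytope $\domain$. Using the compatibility property of Assumption~\ref{item:approximationProperties:interpolationEstimates}, I would pick interpolants $z^{\curr} \in \Vh[\curr]$ and $z^{\prev} \in \Vh[\prev]$ of $z$ which agree on each $\E \in \mesh[\curr] \cap \mesh[\prev]$. Writing $\norm{w}^2 = \A(\trialcurr - \ellipReconLag{\curr}{\curr}{\trialcurr}, z) - \A(\trialprev - \ellipReconLag{\prev}{\prev}{\trialprev}, z)$ and splitting each summand as $\A(\cdot, z) = \A(\cdot, z - z^{\curr}) + \A(\cdot, z^{\curr})$ (and analogously at time $\prev$), the conforming-space contributions are controlled directly by Lemma~\ref{lem:ellipreconInconsistency} together with the global inconsistency bounds of Lemma~\ref{lem:globalInconsistencyEstimate}, producing the $\inconGroup[\curr]$ and $\inconGroup[\prev]$ summands of~\eqref{eq:twoMesh:localBound} after factoring out $\abs{z}_2$.

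The interpolation-error summands are rewritten, via the definition of $\ellipReconLag{\curr}{\curr}{\cdot}$ and elementwise integration by parts, as $-(\elresidualLag{\curr}{\curr}{\trialcurr}, z - z^{\curr}) + (\jumpresidual[\curr] \trialcurr, z - z^{\curr})_{\sides[\curr]}$ and analogously at time $\prev$. Subtracting the two representations I would then apply the add/subtract trick, inserting $\pm (\elresidualLag{\prev}{\prev}{\trialprev}, z - z^{\curr})$ and $\pm (\jumpresidual[\prev] \trialprev, z - z^{\curr})$, so that the residual/jump contribution decomposes into cross-mesh differences $(\elresidualLag{\curr}{\curr}{\trialcurr} - \elresidualLag{\prev}{\prev}{\trialprev}, z - z^{\curr})$ and $(\jumpresidual[\curr] \trialcurr - \jumpresidual[\prev] \trialprev, z - z^{\curr})_{\sides[\curr] \cup \sides[\prev]}$, handled by Cauchy–Schwarz together with the scaled estimates $\norm{z - z^{\curr}}_{\E} \lesssim h_{\E}^{2} \abs{z}_{2,\E}$ and a scaled trace inequality $\norm{z - z^{\curr}}_{\side} \lesssim h_{\side}^{3/2} \abs{z}_{2,\patch{\E}}$, plus two remainders $(\elresidualLag{\prev}{\prev}{\trialprev}, z^{\curr} - z^{\prev})$ and $(\jumpresidual[\prev] \trialprev, z^{\curr} - z^{\prev})_{\sides[\prev]}$. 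The latter two vanish on shared elements and sides by the compatibility of interpolants, and can then be treated directly by Lemma~\ref{lem:meshCompatibility} (and its trace analogue), reducing them to contributions over $\mesh[\prev]\setminus\mesh[\curr]$ and $\sides[\prev]\setminus\sides[\curr]$ weighted by $\hat{h}_{\prev,\curr}$.

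Assembling all contributions, dividing by $\abs{z}_2$, and using $\abs{z}_2 \lesssim \norm{w}$ cancels one factor of $\norm{w}$ and produces the stated estimator. The main obstacle I anticipate is the careful bookkeeping on the mesh skeletons: Lemma~\ref{lem:meshCompatibility} is formulated for elementwise integrals and relies on $\mesh[\curr]\setminus\mesh[\prev]$ and $\mesh[\prev]\setminus\mesh[\curr]$ tessellating the same subdomain, but no identical identity is available for $\sides[\curr]\setminus\sides[\prev]$ and $\sides[\prev]\setminus\sides[\curr]$; the trace version therefore requires one to exploit that $z^{\curr} - z^{\prev}$ is supported in elements adjacent to modified elements and to combine a discrete trace inequality with the bound from Lemma~\ref{lem:meshCompatibility} applied to the element-patch of each modified side, ensuring that the final contribution genuinely lives only on $\sides[\prev]\setminus\sides[\curr]$ with the correct $\hat{h}_{\prev,\curr}^{3/2}$ weight.
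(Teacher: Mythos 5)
Your proposal is correct and takes essentially the same route as the paper's proof: the same initial split into the transfer term and the reconstruction-difference term, the same $L^2$ duality argument with interpolants constructed locally so that they coincide on common elements, the inconsistency contributions handled via Lemmas~\ref{lem:ellipreconInconsistency} and~\ref{lem:globalInconsistencyEstimate}, the cross-mesh residual and jump differences via interpolation and scaled trace estimates, and the remainder localised to $\mesh[\prev]\setminus\mesh[\curr]$ and $\sides[\prev]\setminus\sides[\curr]$ through Lemma~\ref{lem:meshCompatibility}. Your add/subtract at the level of the integrated-by-parts residuals is algebraically the same decomposition the paper performs at the level of the bilinear form (its terms $\term{3}$ and $\term{4}$), and your closing observation about the skeleton bookkeeping merely makes explicit a step the paper leaves implicit.
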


\begin{proof}
	Let $t\in(t^{\prev},t^{\curr}]$.
	Adopting the notation of Assumption~\ref{item:ellipticEstimates:twoMesh}, we split
	\begin{align*}
		\timestep^{\curr} \norm{\timederivh[\curr] \trialcurr - \trialfn^{\ellipRecon}_t(t)}
		\leq
		\norm{(\ellipReconLag{\curr}{\curr}{\trialcurr} - \trialcurr) - (\ellipReconLag{\prev}{\prev}{\trialprev} - \trialprev)}
		+
		\norm{\transfer[\curr] \trialprev - \trialprev},
	\end{align*}
	and the estimate is obtained by further estimating the first term.
	To this end, let $\dualfn \in H^2(\domain) \cap H^1_0(\domain)$ be the unique solution to the dual elliptic problem
	\begin{align}\label{eq:twoMesh:dualEllipticProblem:local}
		\A(\testfn, \dualfn) = ((\ellipReconLag{\curr}{\curr}{\trialcurr} - \trialcurr) - (\ellipReconLag{\prev}{\prev}{\trialprev} - \trialprev), \testfn) \quad \text{ for all } \testfn\in H^1_0(\domain),
	\end{align}
	which satisfies
		$\abs{\dualfn}_2 \leq \Creg \norm{(\ellipReconLag{\curr}{\curr}{\trialcurr} - \trialcurr) - (\ellipReconLag{\prev}{\prev}{\trialprev} - \trialprev)}$.
	Introducing $\dualfnh[\curr] \in \Vh[\curr]$ and $\dualfnh[\prev] \in \Vh[\prev]$ as the interpolants of $\dualfn$
	satisfying~\eqref{eq:interpolationEstimate},
	we split
	\begin{align*}
		&\norm{(\ellipReconLag{\curr}{\curr}{\trialcurr} - \trialcurr) - (\ellipReconLag{\prev}{\prev}{\trialprev} - \trialprev)}^2
		\notag
		\\
		&\qquad\qquad=
		\A(\ellipReconLag{\curr}{\curr}{\trialcurr} - \trialcurr, \dualfnh[\curr])
		\notag
		+
		\A(\trialprev - \ellipReconLag{\prev}{\prev}{\trialprev}, \dualfnh[\prev])
		\\&\qquad\qquad\quad
    +
		\A((\ellipReconLag{\curr}{\curr}{\trialcurr} - \trialcurr) - (\ellipReconLag{\prev}{\prev}{\trialprev} - \trialprev), \dualfn - \dualfnh[\curr])
		\notag
		\\&\qquad\qquad\quad
		+
		\A(\trialprev - \ellipReconLag{\prev}{\prev}{\trialprev}, \dualfnh[\curr] - \dualfnh[\prev])
		\notag
		=
		\term{1} + \term{2} + \term{3} + \term{4}.
	\end{align*}
	The terms $\term{1}$ and $\term{2}$
	express the inconsistency of $\ellipReconLag{\curr}{\curr}{}$ and $\ellipReconLag{\prev}{\prev}{}$
	and are estimated by $\inconGroup[\curr](\trialcurr, \force[\curr]) \abs{\dualfn}_2$ and $\inconGroup[\prev](\trialprev, \force[\prev]) \abs{\dualfn}_2$ respectively using Lemmas~\ref{lem:ellipreconInconsistency} and~\ref{lem:globalInconsistencyEstimate}.
	Applying the definition of the elliptic reconstruction and residuals, and integrating by parts, $\term{3}$ becomes
	\begin{align*}
		\term{3}
		&
		=
		(\elresidualLag{\curr}{\curr}{\trialcurr} - \elresidualLag{\prev}{\prev}{\trialprev}, \dualfn - \dualfnh[\curr])
		+
		\sum_{\side \in \sides[\curr] \cup \sides[\prev]}
			(\jumpresidual[\curr] \trialcurr - \jumpresidual[\prev] \trialprev, \dualfn - \dualfnh[\curr])_{\side},
	\end{align*}
	and
  Assumption~\ref{item:approximationProperties:interpolationEstimates} and the scaled trace inequality produce
	\begin{align*}
		&
		\term{3}
		\leq
		\Cclem \abs{\dualfn}_2
		\Big(
				\norm{h_{\curr}^2 (\elresidualLag{\curr}{\curr}{\trialcurr} - \elresidualLag{\prev}{\prev}{\trialprev})}^2
			+
			\beta
				\norm{h_{\curr}^{3/2} (\jumpresidual[\curr] \trialcurr - \jumpresidual[\prev] \trialprev) }_{\sides[\curr]}^2
		\Big)^{1/2},
	\end{align*}
	where $\beta > 0$ depends on the regularity of the mesh.

  The term $\term{4}$ represents the non-hierarchicality of the discrete spaces.
	Integrating by parts and recalling the definition of the elliptic reconstruction it becomes
  \begin{align*}
		\term{4}
    =
    (\elresidualLag{\prev}{\prev}{\trialprev}, \dualfnh[\curr] - \dualfnh[\prev])
    +
    \sum_{\side \in \sides[\prev]}
    (\jumpresidual[\prev], \dualfnh[\curr] - \dualfnh[\prev])_{\side}.
  \end{align*}
  The local nature of the interpolant implies that $(\dualfnh[\curr] - \dualfnh[\prev])|_{\E} = 0$ for $\E \in \mesh[\curr] \cap \mesh[\prev]$, and the term may be estimated using Lemma~\ref{lem:meshCompatibility} to find
	\begin{align*}
		\term{4}
		\leq
		C
		\abs{\dualfn}_2
		\Big(
		    \norm{\hat{h}_{\prev, \curr}^2
			\elresidualLag{\prev}{\prev}{\trialprev}}_{\mesh[\prev] \setminus \mesh[\curr]}^2
		    +
		    \norm{\hat{h}_{\prev, \curr}^{3/2}
			\jumpresidual[\prev] \trialprev}_{\sides[\prev] \setminus \sides[\curr]}^2
		\Big)^{1/2}.
	\end{align*}

	The result then follows by invoking the regularity estimate for $\dualfn$.
\end{proof}

Although the proof above makes no assumptions on the relationship between $\mesh[\curr]$ and $\mesh[\prev]$, the estimate itself is clearly best suited to the case when $\mesh[\curr]$ is built by modifying (e.g. coarsening or refining) a small subset of elements from mesh $\mesh[\prev]$.
This is exploited by the estimate through the cancellation of the interpolants into the two discrete spaces in areas of the mesh which are not modified, providing some `smallness' to the estimator terms.

\subsubsection{Time derivative estimator for global mesh modification}
In the case of global mesh modification (e.g. through a complete remeshing procedure or a moving mesh method), however, the mesh elements and edges \emph{cannot} be expected to coincide.
Thus, even if the mesh is only modified slightly, the estimator above will not benefit from local cancellation and may provide a rather pessimistic estimate of the error.

Instead, we handle this case with a separate estimate, presented in Lemma~\ref{lem:residualEstimates:twoMesh:global}, which is slightly more cumbersome but directly relies on similarities between the discrete spaces rather than geometrical similarities between the meshes.
It is based on the following \emph{elliptic transfer operator}, which enables a `discrete integration by parts' to be performed in the analysis to replace a suboptimal term by an optimal one; see Remark~\ref{rem:ellipticTransferRole}.
\begin{definition}[Elliptic transfer operator]\label{def:ellipticTransfer}
	The \emph{elliptic transfer operator} $\canonicalTransfer[\curr] : \Vh[\prev] \to \Vh[\curr]$ satisfies
	\begin{align*}
		\A(\ellipReconLag{\curr}{\prev}{\canonicalTransfer[\curr] \trialprev}, \testcurr)
		=
		\A(\ellipReconLag{\prev}{\prev}{\trialprev}, \testcurr)
		\quad \text{ for all } \trialprev \in \Vh[\prev] \text{ and } \testcurr \in \Vh[\curr].
	\end{align*}
\end{definition}
This essentially transfers the solution from one mesh to another by projecting its elliptic reconstruction onto the new mesh.
Indeed, it satisfies the crucial properties
\begin{align*}
	\ellipProj[\curr] \ellipReconLag{\curr}{\prev}{\canonicalTransfer[\curr] \trialprev}
	=
	\ellipProj[\curr] \ellipReconLag{\prev}{\prev}{\trialprev}
	\quad \text{ and } \quad
	\LTwoProj[\curr] \ellipReconRHSLag{\curr}{\prev}{\canonicalTransfer[\curr] \trialprev}
	=
	\LTwoProj[\curr] \ellipReconRHSLag{\prev}{\prev}{\trialprev},
\end{align*}
where $\ellipProj[\curr]: H^1_0(\domain) \to \Vh[\curr]$ and $\LTwoProj[\curr] : L^2(\domain) \to \Vh[\curr]$ denote the $\A$-orthogonal elliptic projector and $L^2(\domain)$-orthogonal projector onto $\Vh[\curr]$ respectively.
This transfer operator may be practically computed by inverting a stiffness matrix since 
it satisfies
\begin{align*}
	\diffoph[\curr]\canonicalTransfer[\curr] \trialprev
	=
	\LTwoProj[\curr]  \big( \diffoph[\prev] \trialprev
	+
	(\LTwoRecon[\prev] - \identity) \brokenProj[\prev] \force[\prev]
	-
	(\LTwoRecon[\curr] - \identity) \brokenProj[\curr] \force[\prev] \big),
\end{align*}
and this further confirms its existence
and that $\canonicalTransfer[\curr]\trialprev = \trialprev$ when $\Vh[\curr] = \Vh[\prev]$.

\begin{remark}[The role of the elliptic transfer operator]\label{rem:ellipticTransferRole}
	Roughly speaking, the key role played by the elliptic transfer operator is to enable a `discrete integration by parts' by converting a discrete spatial operator on one mesh to a discrete spatial operator on a different mesh which can then be moved onto the test function in an optimal manner.
	In a simplified setting, we perform the following:
	\begin{align*}
		(\diffoph[\prev] \trialprev - \diffoph[\curr] \transfer[\curr] \trialprev, \phi^{\curr})
		=
		(\diffoph[\curr] (\canonicalTransfer[\curr] - \transfer[\curr]) \trialprev, \phi^{\curr})
		=
		((\canonicalTransfer[\curr] - \transfer[\curr]) \trialprev, \diffoph[\curr] \phi^{\curr}),
	\end{align*}
	and apply a stability estimate of the form
		$\norm{\diffoph[\curr] \phi^{\curr}} \leq C \abs{\phi}_2$.
	The elliptic transfer operator is thus responsible for ensuring that the final estimate is of optimal order, because the $H^2(\domain)$-like term $\norm{\diffoph[\prev] \trialprev - \diffoph[\curr] \transfer[\curr] \trialprev}$ which would otherwise be present (and of suboptimal order) is replaced by the (optimal order) $L^2(\domain)$ term $\norm{\canonicalTransfer[\curr] \trialprev - \transfer[\curr] \trialprev}$.
\end{remark}

\begin{lemma}[Time derivative estimator for global mesh modification]\label{lem:residualEstimates:twoMesh:global}
	There exist positive constants $C_1, C_2$ depending only on the domain and mesh regularity, such that
	\begin{align*}
		&\norm{(\ellipReconLag{\curr}{\curr}{} - \Id) \trialcurr - (\ellipReconLag{\curr}{\prev}{} - \Id) \transfer[\curr] \trialprev}
		\leq
		E_1
		\quad \text{and} \quad
		\norm{\ellipReconLag{\curr}{\prev}{\transfer[\curr] \trialprev} - \ellipReconLag{\prev}{\prev}{\trialprev}}
		\leq
		E_2,
	\end{align*}
	where
	\begin{align*}
		E_1
		&=
		C_1
			\Big(
				\norm{h_{\curr}^2 (\elresidualLag{\curr}{\curr}{\trialcurr} - \elresidualLag{\curr}{\prev}{\transfer[\curr] \trialprev})}^2
				+
				\norm{h_{\curr}^{3/2}
					\jumpresidual[\curr] (\trialcurr - \transfer[\curr] \trialprev)}_{\sides[\curr]}^2
		\\&\qquad\qquad
				+
				\inconGroup[\curr](\trialcurr - \transfer[\curr] \trialprev, \force[\curr] - \force[\prev]) ^2
			\Big)^{1/2},
	\end{align*}
	measures error due to time-stepping and spatial discretisation, and
	\begin{align*}
		E_2
		&=
		C_2
			\Big(
				\norm{h_{\curr}^2 \big( \ellipReconRHSLag{\curr}{\prev}{\transfer[\curr] \trialprev}
				-
				\ellipReconRHSLag{\prev}{\prev}{\trialprev} \big)}^2
			\\&\qquad\quad
				+
				\norm{\transfer[\curr] \trialprev - \canonicalTransfer[\curr] \trialprev}^2
				+
				\sum_{\E \in \mesh[\curr]}
				h_{\E}^2
				\triplenorm{\transfer[\curr] \trialprev - \canonicalTransfer[\curr] \trialprev}_{\E}^2
			\\&\qquad\quad
				+
				\projErrorLTwo[\curr](h_{\curr}^2, \diffoph[\curr] (\transfer[\curr] \trialprev - \canonicalTransfer[\curr] \trialprev))^2
				+
				\projErrorEnergy[\curr](h_{\curr}, \transfer[\curr] \trialprev - \canonicalTransfer[\curr] \trialprev)^2
			\Big)^{1/2},
	\end{align*}
	measures error produced by modifying the discrete space.
	Consequently, Assumption~\ref{item:ellipticEstimates:twoMesh} is satisfied by
	the \emph{time derivative estimator for global mesh modification}
	\begin{align}
		\ellipest{L^2}^{\partial_t}(\trialcurr, \trialprev,
		 \force[\curr], \force[\prev])
		&=
		\frac{1}{\timestep^{\curr}} (E_1 + E_2).
		\label{eq:twoMesh:globalBound}
	\end{align}
\end{lemma}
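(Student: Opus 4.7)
The plan is to estimate the time derivative mismatch by noting that on $(t^{\prev},t^{\curr}]$
\begin{equation*}
\timestep^{\curr}\big(\timederivh[\curr]\trialcurr - \trialfn^{\ellipRecon}_t(t)\big) = (\trialcurr - \ellipReconLag{\curr}{\curr}{\trialcurr}) - (\transfer[\curr] \trialprev - \ellipReconLag{\prev}{\prev}{\trialprev}),
\end{equation*}
and inserting the intermediate elliptic reconstruction $\ellipReconLag{\curr}{\prev}{\transfer[\curr]\trialprev}$, which is computed on the current mesh but driven by the previous-step data. The triangle inequality then immediately produces the sum $E_1+E_2$ once the two stated bounds are established, so the two estimates decouple cleanly into a same-mesh reconstruction argument for $E_1$ and a mesh-comparison argument for $E_2$.

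For the $E_1$ bound, I would exploit that both $\ellipReconLag{\curr}{\curr}{\trialcurr}$ and $\ellipReconLag{\curr}{\prev}{\transfer[\curr]\trialprev}$ are computed on $\mesh[\curr]$: a direct subtraction in~\eqref{eq:ellipticReconstruction} shows that their difference satisfies an elliptic reconstruction equation on $\Vh[\curr]$ whose discrete argument is $\trialcurr - \transfer[\curr]\trialprev$ and whose data increment is $\force[\curr]-\force[\prev]$ (the $(\LTwoRecon[\curr]-\Id)\brokenProj[\curr]$ contributions enter the two reconstructions identically up to the data slot). The $E_1$ estimate then follows from the standard dual-problem argument underlying Lemma~\ref{lem:residualEstimates:singleMesh}: introduce a regular dual function $\dualfn\in H^2\cap H^1_0(\domain)$, interpolate it by $\dualfnh[\curr]\in\Vh[\curr]$ satisfying~\eqref{eq:interpolationEstimate}, invoke Lemma~\ref{lem:ellipreconInconsistency}, integrate by parts elementwise to expose the element and jump residuals of $\trialcurr-\transfer[\curr]\trialprev$ on $\mesh[\curr]$, and close with Lemma~\ref{lem:globalInconsistencyEstimate} to produce the inconsistency contribution $\inconGroup[\curr](\trialcurr-\transfer[\curr]\trialprev,\force[\curr]-\force[\prev])$.

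For $E_2$, which is the novel part, I would further split
\begin{equation*}
\ellipReconLag{\curr}{\prev}{\transfer[\curr]\trialprev} - \ellipReconLag{\prev}{\prev}{\trialprev} = T_A + T_B,
\end{equation*}
with $T_A:=\ellipReconLag{\curr}{\prev}{\transfer[\curr]\trialprev}-\ellipReconLag{\curr}{\prev}{\canonicalTransfer[\curr]\trialprev}$ and $T_B:=\ellipReconLag{\curr}{\prev}{\canonicalTransfer[\curr]\trialprev}-\ellipReconLag{\prev}{\prev}{\trialprev}$. Definition~\ref{def:ellipticTransfer} delivers, by construction, the Galerkin-type orthogonality $\A(T_B,\testcurr)=0$ for all $\testcurr\in\Vh[\curr]$; a duality argument with $\dualfn$ solving the adjoint problem with source $T_B$, freely subtracting $\dualfnh[\curr]$ from the dual function, then bounds $\norm{T_B}$ by $\Cinterp\norm{h_{\curr}^2(\ellipReconRHSLag{\curr}{\prev}{\canonicalTransfer[\curr]\trialprev}-\ellipReconRHSLag{\prev}{\prev}{\trialprev})}\abs{\dualfn}_2$, and the triangle inequality converts this into the first term of $E_2$ plus a remainder $\norm{h_{\curr}^2\diffoph[\curr](\transfer[\curr]\trialprev-\canonicalTransfer[\curr]\trialprev)}$ that is absorbed by the weighted $\triplenorm{\cdot}_{\E}$ and $\projErrorLTwo$ contributions of $E_2$ via an inverse estimate. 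For $T_A$, subtracting the two defining instances of~\eqref{eq:ellipticReconstruction} (the data terms cancel) gives
\begin{equation*}
\A(T_A,\testfn) = -(\diffoph[\curr]\phi^{\curr},\testfn) \quad \text{for all } \testfn\in H^1_0(\domain), \qquad \phi^{\curr}:=\transfer[\curr]\trialprev-\canonicalTransfer[\curr]\trialprev\in\Vh[\curr],
\end{equation*}
and a second dual problem with source $T_A$ is introduced. The crucial manoeuvre is to rewrite $-(\diffoph[\curr]\phi^{\curr},\dualfnh[\curr])$, using~\eqref{eq:discreteLaplacianDef} and the inconsistency identities of Definition~\ref{def:inconsistencyRepresentation}, as $\A(\phi^{\curr},\dualfnh[\curr])-\inconsistencya[\curr](\phi^{\curr},\dualfnh[\curr])-\inconsistencym[\curr](\diffoph[\curr]\phi^{\curr},\dualfnh[\curr])$. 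Elementwise integration by parts on the $\A$-term together with~\eqref{eq:interpolationEstimate} produces the optimally-scaling $\sum_{\E}h_{\E}^2\triplenorm{\phi^{\curr}}_{\E}^2$ and $\norm{\phi^{\curr}}^2$ contributions of $E_2$, while Lemma~\ref{lem:globalInconsistencyEstimate} maps the two inconsistency terms to $\projErrorEnergy[\curr](h_{\curr},\phi^{\curr})^2$ and $\projErrorLTwo[\curr](h_{\curr}^2,\diffoph[\curr]\phi^{\curr})^2$. The bound $\abs{\dualfn}_2\leq\Creg\norm{T_A}$ finishes the estimate.

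The principal obstacle lies in analysing $T_A$: if one naively bounded $(\diffoph[\curr]\phi^{\curr},\dualfn-\dualfnh[\curr])\leq\Cinterp\norm{h_{\curr}^2\diffoph[\curr]\phi^{\curr}}\abs{\dualfn}_2$, the retained factor $\norm{\diffoph[\curr]\phi^{\curr}}$ behaves like an $H^2$-seminorm of $\phi^{\curr}$ and yields only $O(1)$ scaling with respect to the mesh-transfer error, spoiling the whole estimate as foreshadowed in Remark~\ref{rem:ellipticTransferRole}. The algebraic manoeuvre above is precisely a `discrete integration by parts' transferring the discrete spatial operator off $\phi^{\curr}$ onto $\dualfnh[\curr]$, leaving only $L^2$- and piecewise $H^1$-type norms of $\phi^{\curr}$ on the right-hand side; this is the whole reason the elliptic transfer operator $\canonicalTransfer[\curr]$ was introduced, and is what gives $E_2$ its optimal order.
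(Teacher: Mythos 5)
Your overall structure matches the paper up to a point: the split of $\timestep^{\curr}\norm{\timederivh[\curr]\trialcurr - \trialfn^{\ellipRecon}_t}$ via the intermediate reconstruction $\ellipReconLag{\curr}{\prev}{\transfer[\curr]\trialprev}$ is exactly the paper's, and your treatment of $E_1$ (dual problem, interpolant, Lemma~\ref{lem:ellipreconInconsistency}, elementwise integration by parts, Lemma~\ref{lem:globalInconsistencyEstimate}) is the paper's argument. The gap is in your treatment of $E_2$, where you deviate by splitting $T_A + T_B$ through $\canonicalTransfer[\curr]\trialprev$ at the level of the reconstructions. Two problems arise. First, in the $T_A$ duality argument you only describe rewriting $-(\diffoph[\curr]\phi^{\curr},\dualfnh[\curr])$ with $\phi^{\curr}=(\transfer[\curr]-\canonicalTransfer[\curr])\trialprev$; the companion piece $-(\diffoph[\curr]\phi^{\curr},\dualfn-\dualfnh[\curr])$ is left unaddressed, and it \emph{cannot} be treated by the discrete integration by parts, since the defining relation~\eqref{eq:discreteLaplacianDef} only converts $(\diffoph[\curr]\phi^{\curr},\cdot)$ when the second argument lies in $\Vh[\curr]$. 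Its natural bound is $C\norm{h_{\curr}^2\diffoph[\curr]\phi^{\curr}}\abs{\dualfn}_2$, and $\norm{h_{\curr}^2\diffoph[\curr]\phi^{\curr}}$ is not one of the terms of $E_2$ (the term $\projErrorLTwo[\curr](h_{\curr}^2,\diffoph[\curr]\phi^{\curr})$ controls only the inconsistency part, not the full $L^2$ norm). Second, the same quantity reappears as the remainder in your triangle inequality converting the $T_B$ bound (which naturally involves $\ellipReconRHSLag{\curr}{\prev}{\canonicalTransfer[\curr]\trialprev}$) into the stated first term of $E_2$ (which involves $\transfer[\curr]\trialprev$). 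Your proposed fix, absorbing it ``via an inverse estimate'', invokes an assumption that is not part of the abstract framework: the paper explicitly remarks, immediately after the lemma, that a local $H^1$--$L^2$ inverse estimate is an \emph{additional} hypothesis that would merely allow $E_2$ to be simplified, precisely because the actual proof never needs it. Moreover, since $\diffoph[\curr]$ is defined globally, the elementwise weights $h_{\E}$ do not localise through~\eqref{eq:discreteLaplacianDef}, so even granting an inverse estimate the absorption essentially also requires quasi-uniformity (or a genuinely local argument you have not supplied). As written, your argument proves a variant estimator, not the stated one, under the stated assumptions.

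The paper avoids this by \emph{not} inserting $\canonicalTransfer[\curr]\trialprev$ into the reconstructions. Instead, it writes $\term{2}^2=(\ellipReconRHSLag{\prev}{\prev}{\trialprev}-\ellipReconRHSLag{\curr}{\prev}{\transfer[\curr]\trialprev},\phi)$ for the dual solution $\phi$, splits $\phi=(\phi-\phi^{\curr})+\phi^{\curr}$ with $\phi^{\curr}\in\Vh[\curr]$ the interpolant, and only then uses the defining property $\LTwoProj[\curr]\ellipReconRHSLag{\curr}{\prev}{\canonicalTransfer[\curr]\trialprev}=\LTwoProj[\curr]\ellipReconRHSLag{\prev}{\prev}{\trialprev}$ on the piece paired with $\phi^{\curr}$. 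The cross-mesh residual difference (with $\transfer[\curr]$, as in $E_2$) thus meets only the interpolation error $\phi-\phi^{\curr}$, giving the first term of $E_2$ directly, while $\diffoph[\curr](\transfer[\curr]-\canonicalTransfer[\curr])\trialprev$ meets only the \emph{discrete} function $\phi^{\curr}$, where the discrete integration by parts and an $L^2$--$H^2$ splitting of $\A$ produce exactly the remaining terms of $E_2$, with no inverse estimate anywhere. If you reorder your insertion of the interpolant and of $\canonicalTransfer[\curr]$ in this way, your argument closes; as proposed, it does not.
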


\begin{proof}
	Adopting the notation of Assumption~\ref{item:ellipticEstimates:twoMesh}, we split the target term as
	\begin{align*}
		\timestep^{\curr}\norm{\timederivh[\curr] \trialcurr - \trialfn^{\ellipRecon}_t(t)}
		\leq
		&
		\norm{(\ellipReconLag{\curr}{\curr}{} - \Id) \trialcurr - (\ellipReconLag{\curr}{\prev}{} - \Id) \transfer[\curr] \trialprev}
		\\&
		+ \norm{\ellipReconLag{\curr}{\prev}{\transfer[\curr] \trialcurr} - \ellipReconLag{\prev}{\prev}{\trialprev}}
		= \term{1}+\term{2},
	\end{align*}
	and show that $E_1$ estimates $\term{1}$ (involving only quantities on one mesh at both the time-steps) and $E_2$ estimates $\term{2}$ (containing terms at time $t^{\prev}$ on both meshes).

	To estimate $\term{1}$, let $\dualfn \in H^2(\domain) \cap H^1_0(\domain)$ satisfy the dual elliptic problem
	\begin{align}\label{eq:twoMesh:dualEllipticProblem:global}
		\A(\testfn, \dualfn) = ((\ellipReconLag{\curr}{\curr}{} - \Id) \trialcurr - (\ellipReconLag{\curr}{\prev}{} - \Id) \transfer[\curr] \trialprev
		, \testfn)
		 \quad \text{ for all } v \in H^1_0(\domain),
	\end{align}
	which provides the regularity estimate
		$\abs{\dualfn}_2
		\leq
		\Creg
		\term{1}$
	and let $\dualfnh[\curr] \in \Vh[\curr]$ denote the interpolant of $\dualfn$ satisfying~\eqref{eq:interpolationEstimate}.
	Then,
	we have
	\begin{align*}
		\term{1}^2
		=&
		\A(
		(\ellipReconLag{\curr}{\curr}{} - \Id)\trialcurr - (\ellipReconLag{\curr}{\prev}{} - \Id) \transfer[\curr] \trialprev
		, \dualfnh[\curr])
		\\
		&+
		\A(
		(\ellipReconLag{\curr}{\curr}{} - \Id)\trialcurr - (\ellipReconLag{\curr}{\prev}{} - \Id) \transfer[\curr] \trialprev
		, \dualfn - \dualfnh[\curr])
		= \term{3} + \term{4}.
		\notag
	\end{align*}
	The term $\term{3}$ expresses the reconstruction inconsistency, so Lemmas~\ref{lem:ellipreconInconsistency} and~\ref{lem:globalInconsistencyEstimate} give
	\begin{align*}
		\term{3}
		\leq
			C \abs{\dualfn}_2
				\inconGroup[\curr](\trialcurr - \transfer[\curr]\trialprev, \force[\curr] - \force[\prev]).
	\end{align*}
	For $\term{4}$, the definition of the elliptic reconstruction and integration by parts provide
	\begin{align*}
		\term{4}
		&=
		(\elresidualLag{\curr}{\curr}{\trialcurr} - \elresidualLag{\curr}{\prev}{\transfer[\curr] \trialcurr}, \dualfn - \dualfnh[\curr])
		+
		\sum_{\side \in \sides[\curr]}
			(\jumpresidual[\curr] (\trialcurr - \transfer[\curr] \trialprev), \dualfn - \dualfnh[\curr])_{\side},
	\end{align*}
	and the Cauchy-Schwarz inequality, Assumption~\ref{item:approximationProperties:interpolationEstimates} and the trace inequality produce
	\begin{align*}
		\term{4}
		\leq
		\Cclem \abs{\dualfn}_2
		\Big(
			\norm{h_{\curr}^2 (\elresidualLag{\curr}{\curr}{\trialcurr} - \elresidualLag{\curr}{\prev}{\transfer[\curr] \trialprev} )}^2
			+
			\beta
			\norm{h_{\curr}^{3/2}
				\jumpresidual[\curr] (\trialcurr - \transfer[\curr] \trialprev)}_{\sides[\curr]}^2
		\Big)^{1/2},
	\end{align*}
	where $\beta$ depends on the regularity of the mesh.
	The estimate $E_1$ then follows by invoking the regularity estimate for $\dualfn$.

	Next, we show how $\term{2}$ may be estimated by $E_2$.
	For this, we use the auxiliary dual problem: find $\phi \in H^2(\domain) \cap H^1_0(\domain)$ such that
	\begin{align*}
		\A(v, \phi) = (\ellipReconLag{\curr}{\prev}{\transfer[\curr] \trialprev} - \ellipReconLag{\prev}{\prev}{\trialprev}, v) \quad \text{ for all } v \in H^1_0(\domain),
	\end{align*}
	which satisfies the regularity estimate
		$\abs{\phi}_2 \leq \Creg
		\term{2}$.
	We therefore obtain
	\begin{align*}
		\term{2}^2
		=
		\A(\ellipReconLag{\curr}{\prev}{\transfer[\curr] \trialprev} - \ellipReconLag{\prev}{\prev}{\trialprev}, \phi),
	\end{align*}
	and the definition of the elliptic transfer operator and elliptic reconstruction imply
	\begin{align*}
		\term{2}^2
		&
		=
		\big(
			\ellipReconRHSLag{\prev}{\prev}{\trialprev}
			-
			\ellipReconRHSLag{\curr}{\prev}{\transfer[\curr] \trialprev}
			, \phi - \phi^{\curr}
		\big)
		+
		\big(
			\diffoph[\curr] (\canonicalTransfer[\curr] \trialprev - \transfer[\curr] \trialprev)
			, \phi^{\curr}
		\big).
	\end{align*}
	The Cauchy-Schwarz inequality and Assumption~\ref{item:approximationProperties:interpolationEstimates} provide
	\begin{align*}
		&\big(
			\ellipReconRHSLag{\curr}{\prev}{\transfer[\curr] \trialprev}
			-
			\ellipReconRHSLag{\prev}{\prev}{\trialprev}, \phi - \phi^{\curr}
		\big)
		\leq
		C \abs{\phi}_2
			\norm{h_{\curr}^2 \big( \ellipReconRHSLag{\curr}{\prev}{\transfer[\curr] \trialprev}
			-
			\ellipReconRHSLag{\prev}{\prev}{\trialprev} \big)},
	\end{align*}
	while
	the definition of $\diffoph[\curr]$ gives
	\begin{align*}
		(
			\diffoph[\curr] (\transfer[\curr] - \canonicalTransfer[\curr]) \trialprev, \phi^{\curr}
		)
		&=
		\A((\transfer[\curr] - \canonicalTransfer[\curr]) \trialprev, \phi)
		+
		\A((\transfer[\curr] - \canonicalTransfer[\curr]) \trialprev, \phi^{\curr} - \phi)
		\\&\qquad
		+
		\inconsistencym[\curr] ((\transfer[\curr] - \canonicalTransfer[\curr]) \trialprev, \phi^{\curr})
		+
		\inconsistencya[\curr]((\transfer[\curr] - \canonicalTransfer[\curr]) \trialprev, \phi^{\curr}).
	\end{align*}
Assumptions~\ref{item:approximationProperties:interpolationEstimates} and \ref{item:approximationProperties:inconsistencyEstimate}
and an $L^2$---$H^2$ splitting of $\A$ then provide the estimate
	\begin{align*}\label{eq:discreteIntegrationByParts}
		&(
			\diffoph[\curr] (\transfer[\curr] - \canonicalTransfer[\curr]) \trialprev, \phi^{\curr}
		)
		\leq
		C
		\abs{\phi}_2
		\Big(
			\norm{(\transfer[\curr] - \canonicalTransfer[\curr]) \trialprev}^2
			+
			\sum_{\E \in \mesh[\curr]} h_{\E}^2 \triplenorm{(\transfer[\curr] - \canonicalTransfer[\curr]) \trialprev}_{\E}^2
		\\&\qquad\qquad\qquad\qquad
			+
			\projErrorLTwo[\curr](h_{\curr}^2, \diffoph[\curr] (\transfer[\curr] - \canonicalTransfer[\curr]) \trialprev)^2
			+
			\projErrorEnergy[\curr](h_{\curr}, (\transfer[\curr] - \canonicalTransfer[\curr]) \trialprev)^2
		\Big)^{1/2},
		\notag
	\end{align*}
	and the result follows by using the regularity estimate for $\phi$.
\end{proof}

We observe that the first four terms of the estimator $E_1$ mimic those of the previous estimate~\eqref{eq:twoMesh:localBound}.
The fifth term of~\eqref{eq:twoMesh:localBound} is instead replaced by the mesh modification estimator $E_2$ which satisfies the desirable property of being zero when $\Vh[\curr] = \Vh[\prev]$ if $\transfer[\curr]$ becomes the identity operator.
The estimator $E_2$ can therefore be seen as being composed of two groups of terms: the first term measures the impact that transferring the solution to the new space has on the modified discrete spatial operator (and is just an $L^2(\domain)$ projection error when $\transfer[\curr] = \canonicalTransfer[\curr]$), while the other terms compare the extent to which $\transfer[\curr]$ is different from $\canonicalTransfer[\curr]$ (and are zero when $\transfer[\curr] = \canonicalTransfer[\curr]$).
The estimator $E_2$ could be simplified further under the additional assumption of a local $H^1$ to $L^2$ inverse estimate to remove the third term.

\section{Application to finite element discretisations}\label{sec:fem}

A conventional conforming finite element method may be seen to fit the abstract framework of Section~\ref{sec:abstractFramework}.
To satisfy Assumption~\ref{item:abstractFramework:mesh}, we suppose that the computational mesh $\mesh[\curr]$ is formed of elements which are the image of a reference simplex or hypercube under a non-degenerate affine mapping.
For simplicity, we suppose that all elements in the mesh are images of the same reference element.
The local discrete function space $\Vh[\E]$ required by Assumption~\ref{item:abstractFramework:fespace} may then be constructed on each element $\E \in \mesh[\curr]$ as
\begin{equation}
	\Vh[\E] :=
	\begin{cases}
		\PE{\k} &\text{if } \E \text{ is the image of a simplex},
		\\
		\QE{\k} &\text{if } \E \text{ is the image of a hypercube},
	\end{cases}
\end{equation}
where $\PE{\k}$ denotes the space of polynomials of \emph{total} degree $\k$ on $\E$, and $\QE{\k}$ denotes the space of tensor-product polynomials of \emph{maximum} degree $\k$ on $\E$.

The discrete bilinear forms of Assumption~\ref{item:abstractFramework:bilinearForms} are constructed consistently, with
\begin{align*}
	\mh[\curr](\trialcurr, \testcurr) = (\trialcurr, \testcurr)
	\quad\text{ and }\quad
	\Ah[\curr](\trialcurr, \testcurr) = \A(\trialcurr, \testcurr),
\end{align*}
meaning that
\begin{align*}
	\inconsistencym[\curr](\trialcurr, \testcurr) = 0
	=
	\inconsistencya[\curr](\trialcurr, \testcurr),
\end{align*}
thus satisfying condition~\ref{item:approximationProperties:inconsistencyEstimate} with $\projErrorLTwo[\E] = \projErrorEnergy[\E] = 0$.
To satisfy Assumption~\ref{item:abstractFramework:forcingTerm} we take $\brokenProj[\curr]$ to be the identity operator (implying that $\forceProj[\curr] = \force[\curr]$ and so $\dataest{S}(t) = 0$),
and $\forceh[\curr] \in \Vh[\curr]$ is therefore defined (by~\eqref{eq:forcehDefinition}) to be the $L^2(\domain)$-orthogonal projection of $\force[\curr]$ into $\Vh[\curr]$.
Alternatively, to assess the impact of numerical integration, $\forceProj[\curr]$ could be fixed as the polynomial interpolating $\force[\curr]$ at the nodes of an appropriate quadrature scheme.

We consider the heat equation
\begin{align}\label{eq:modelHeatEquation}
	u_t - \alpha \Delta u = \force(x,y,t),
\end{align}
where $\alpha = 0.01$, and $\force$ and the boundary conditions are constructed to give the exact solution
\begin{align}\label{eq:fem:numerics:hat}
	u(r, t) = g(t) s(-m(t) (r^2 - r_0^2)),
\end{align}
where $r^2 = x^2 + y^2$ and
\begin{align*}
	s(\xi) = 1 + \tanh(\xi), \quad m(t) = \frac{100}{3 t + 2}, \quad g(t) = \frac{10}{t^2 + 20}, \quad r_0 = 0.15,
\end{align*}
for $(x,y) \in \domain = [0,1]^2$ and $t \in [0, 5]$.
This solution mimics the behaviour of an initially localised concentration of solute (shown in Fig.~\ref{fig:fem:numerics:hat:meshes}) dissolving into a medium, featuring initially steep gradients which diffuse away over time.
We discretise this problem using an ad-hoc moving mesh method, in which the topology of the mesh remains fixed throughout the simulation (i.e. there is no refinement or coarsening), although the vertices of the elements are mapped so that the areas of refinement are concentrated around the layers of the solution by a time-dependent mapping function determined a priori.
Sample meshes are shown in Fig.~\ref{fig:fem:numerics:hat:meshes}.
Comparable numerical results for an analogous estimator using a fixed computational mesh are given in~\cite[Section 7]{Sutton:2018ex}.

In this setting, it is appropriate to select Lemma~\ref{lem:residualEstimates:twoMesh:global} to provide the time derivative estimator, and by performing mesh transfer using the elliptic transfer operator from Definition~\ref{def:ellipticTransfer}, the spatial error estimator takes the simpler form
\begin{align*}
	\spaceest(t)
	=
	\frac{C}{\timestep^{\curr}}
	\Big(&
		\norm{h_{\curr}^2 (\elresidualLag{\curr}{\curr}{\trialcurr} - \elresidualLag{\curr}{\prev}{\canonicalTransfer[\curr] \trialprev})}^2
		+
		\norm{h_{\curr}^{3/2}
			\jumpresidual[\curr] (\trialcurr - \canonicalTransfer[\curr] \trialprev)}_{\sides[\curr]}^2
		\\&
		+
		\norm{h_{\curr}^2 \big( \Proj{\curr}{L^2} - \identity \big)
		\ellipReconRHSLag{\prev}{\prev}{\trialprev}}^2
	\Big)^{1/2}.
\end{align*}
Since the chief novelty here is the $L^{\infty}(0,t; L^2(\domain))$ estimate, for brevity we do not present the results for the $L^2(0,t; H^1(\domain))$ estimate.

The results are computed as a sequence of simulations, using linear finite element spaces over progressively finer meshes.
The computational mesh for each simulation is constructed by warping a fixed primitive mesh formed of uniform square elements, using the same (time-dependent) mesh warping function for each simulation.
The mesh used on simulation $i$, with $i \geq 1$, consists of $2^{2(i+2)}$ elements, and the diameter of the elements on the primitive mesh may therefore be computed in each case as $h_i = 2^{1/2 - 2(i+1)}$.
By linking the size of the time-step $\timestep_i$ to the size of $h_i$, we may therefore define a meaningful notion of convergence rate with respect to $i$ as a function of time, computed for a quantity $F^i(t)$ as
\begin{align}
	\operatorname{rate}_i(t) = \frac{\log(F^i(t)) - \log(F^{i-1}(t))}{\log(h_i) - \log(h_{i-1})}.
\end{align}

In Figures~\ref{fig:fem:numerics:hat:2} and~\ref{fig:fem:numerics:hat:1} we plot the results obtained with $\timestep_i = h_i^2$ and $\timestep_i = h_i$ respectively.
Each line on the plots represents  a single simulation in the sequence, and the solid line represents the results obtained on the finest mesh.
Subfigure (A) in each case shows the behaviour of the error and estimator alongside the \emph{effectivity} of the estimator, calculated as the ratio of estimated to true error, i.e.
\begin{align}
	\operatorname{effectivity}_i(t) = \frac{\operatorname{estimator}_i(t)}{\norm{u - \timerecon_i}_{L^{\infty}(0,t; L^2(\domain))}},
\end{align}
where, $\operatorname{estimator}_i(t)$ and $\timerecon_i(t)$ denote the estimator and discrete solution calculated on mesh $i$.

A priori error estimates for finite element discretisations of parabolic problems with backward Euler time-stepping (albeit on fixed computational meshes) lead us to expect the simulation error to be of the order $\mathcal{O}(h^2 + \timestep)$, translating to an expected convergence rate with respect to $i$ of 2 and 1 for the results in Figures~\ref{fig:fem:numerics:hat:2} and~\ref{fig:fem:numerics:hat:1}, respectively.
This is indeed what we observe numerically, for both the error and estimator.
Moreover, the effectivity values of approximately 16 and 2 in the two cases respectively indicate a good level of agreement between the true and estimated error.
For the simulations with $\timestep_i = h_i$ (Figure~\ref{fig:fem:numerics:hat:1}), it appears that the effectivity initially grows as $i$ increases (and thus the spatial and temporal discretisations become finer), before levelling off on the final simulations.
This may be attributed to the fact that the true error converges faster than expected between the first simulations, a pre-asymptotic effect which is reflected to a lesser extent by the estimator.
Once the asymptotic regime is reached, however, both the error and estimator converge at the same rate and the effectivities stabilise.
In the case of $\timestep_i = h_i^2$, we observe that the error and estimator both express approximately the expected convergence rate for all $i$, and the effectivity is therefore predictably smaller and more stable.

A further key observation we draw from these results is that the behaviour of the components of the estimator (plotted in subfigure (B) of each figure) appears to vindicate our choice of names for them.
In particular, when $\timestep_i = h_i$, the time estimator and data approximation estimator converge at order one, whereas when $\timestep_i = h_i^2$, they both converge at order two.
This matches the behaviour we would expect from a priori estimates, given the scaling of the time-step, and we note that it is these components which are of order one and therefore responsible for restricting the estimate to the correct convergence rate when $\timestep_i = h_i$.
The component of the estimator designated as measuring the spatial error may be seen to converge at rate two in both cases, alongside the elliptic component of the estimator.

\begin{figure}
	\centering{%
	\includegraphics[width=0.3\linewidth]{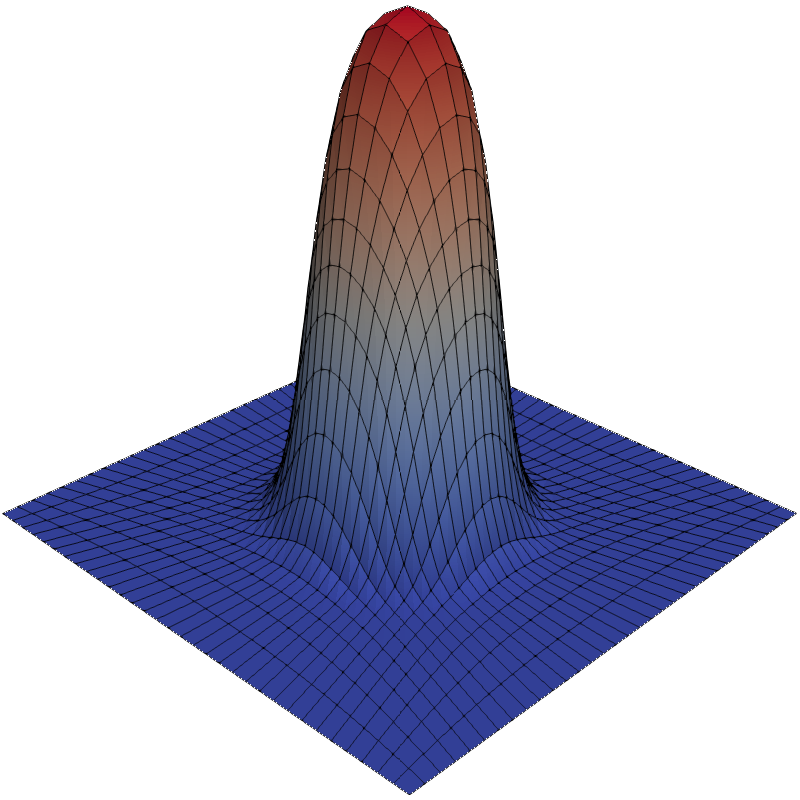}\quad%
	\includegraphics[width=0.3\linewidth]{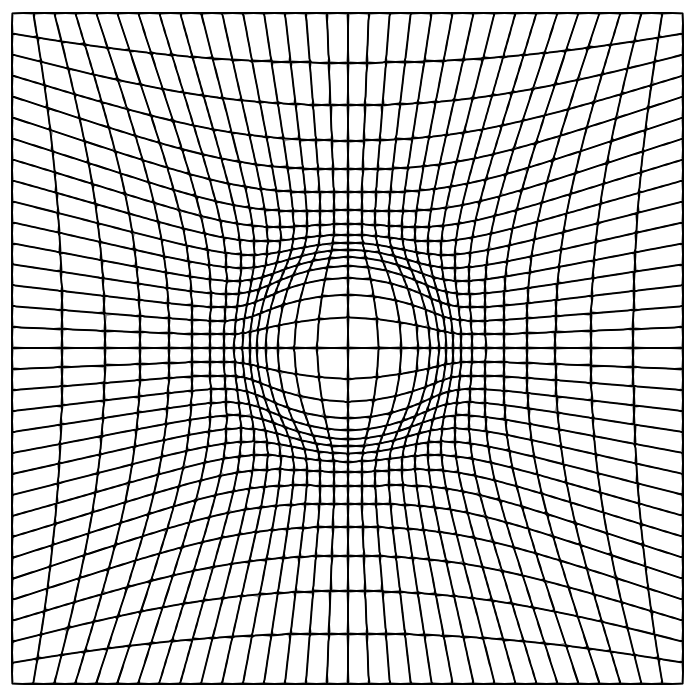}\quad%
	\includegraphics[width=0.3\linewidth]{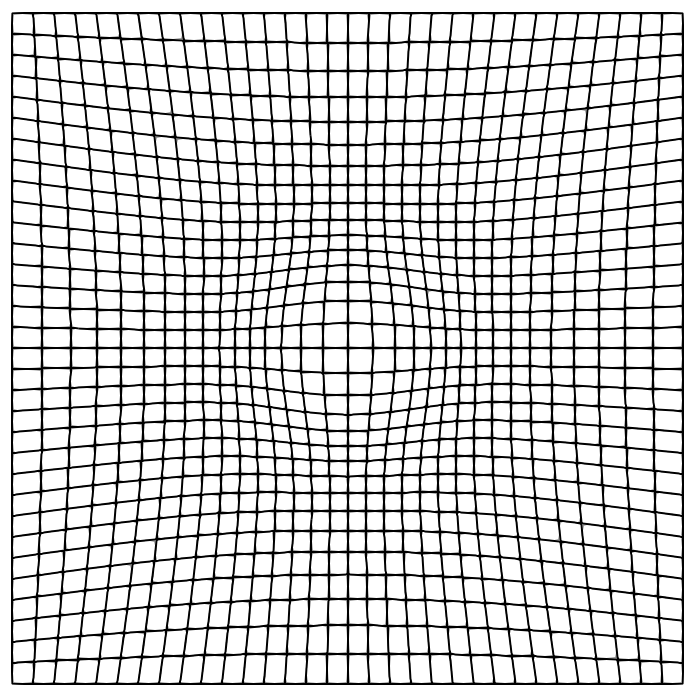}%
	}
	\caption{Initial condition and initial and final meshes, selected a priori to fit the layers of the solution~\eqref{eq:fem:numerics:hat}.}
	\label{fig:fem:numerics:hat:meshes}
\end{figure}

\begin{figure}%
  \centering{%
  \subcaptionbox{Error and estimator}[\textwidth]{\includegraphics[width=0.9\textwidth]{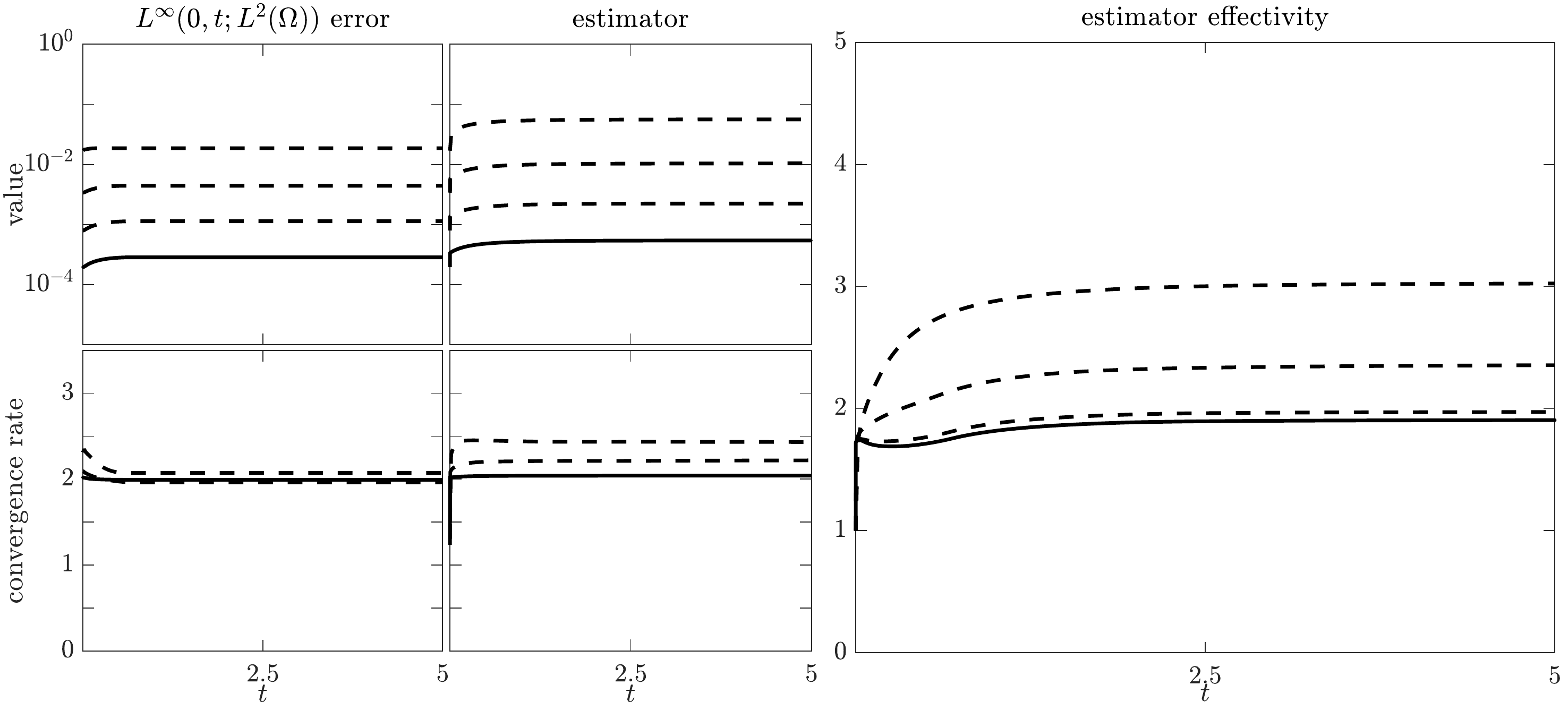}\vspace{-0.75em}}
  \vspace{1em}

  \subcaptionbox{Components of the estimator}[\textwidth]{\includegraphics[width=0.9\textwidth]{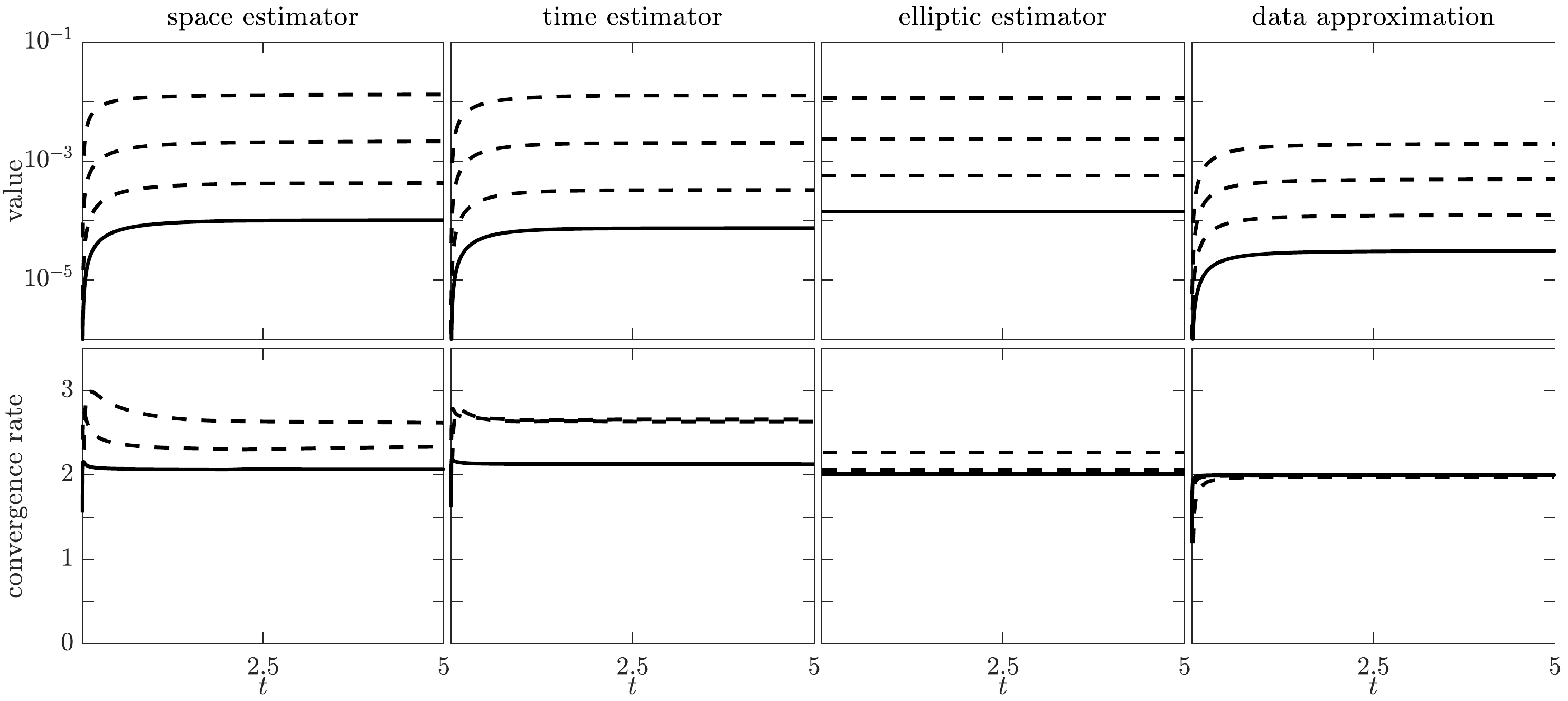}\vspace{-0.75em}}
  }
  \caption{Behaviour of the error and estimator for the moving mesh example~\eqref{eq:fem:numerics:hat} with $\tau \approx h^2$. Solid lines indicate results on the finest mesh.}
  \label{fig:fem:numerics:hat:2}
\end{figure}

\begin{figure}%
  \centering{%
  \subcaptionbox{Error and estimator}[\textwidth]{\includegraphics[width=0.9\textwidth]{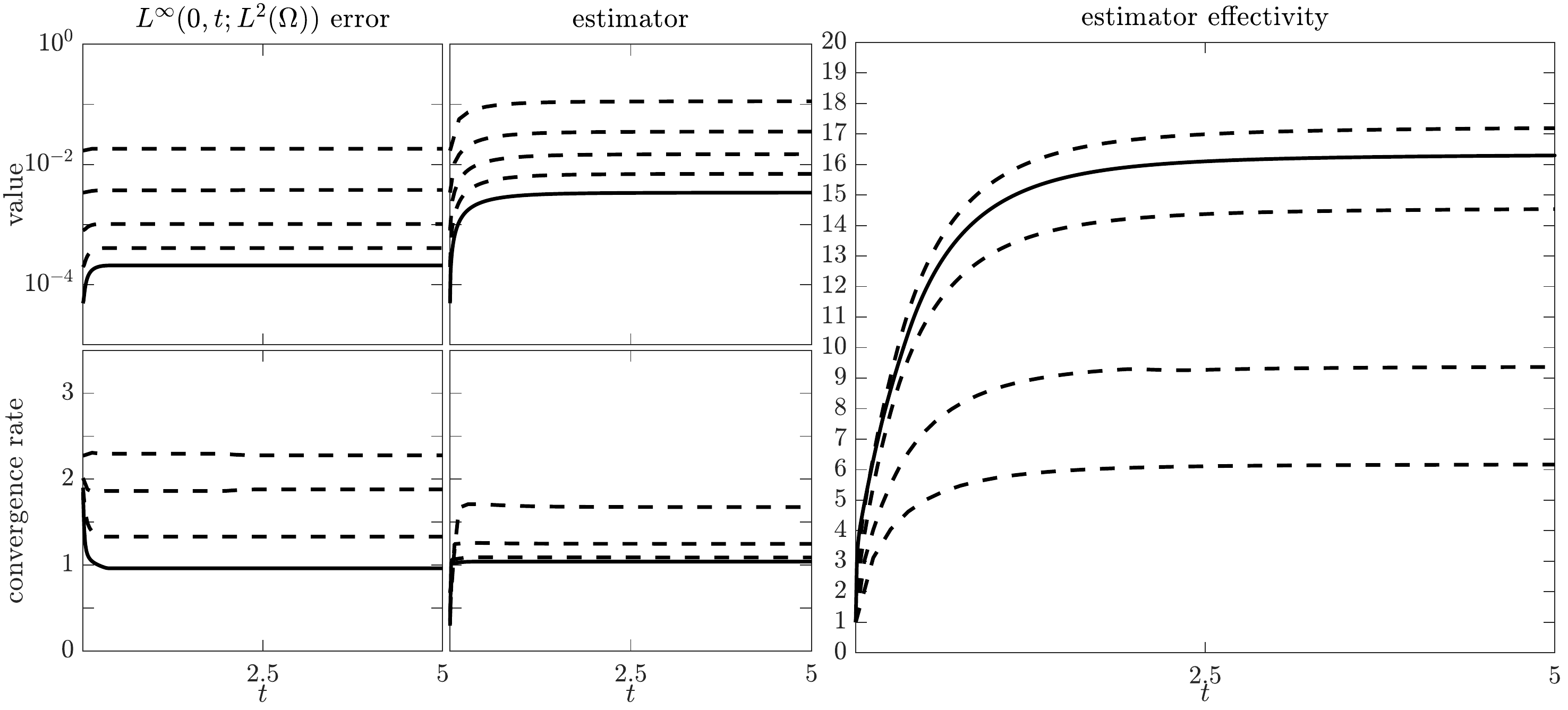}\vspace{-0.75em}}
  \vspace{1em}

  \subcaptionbox{Components of the estimator}[\textwidth]{\includegraphics[width=0.9\textwidth]{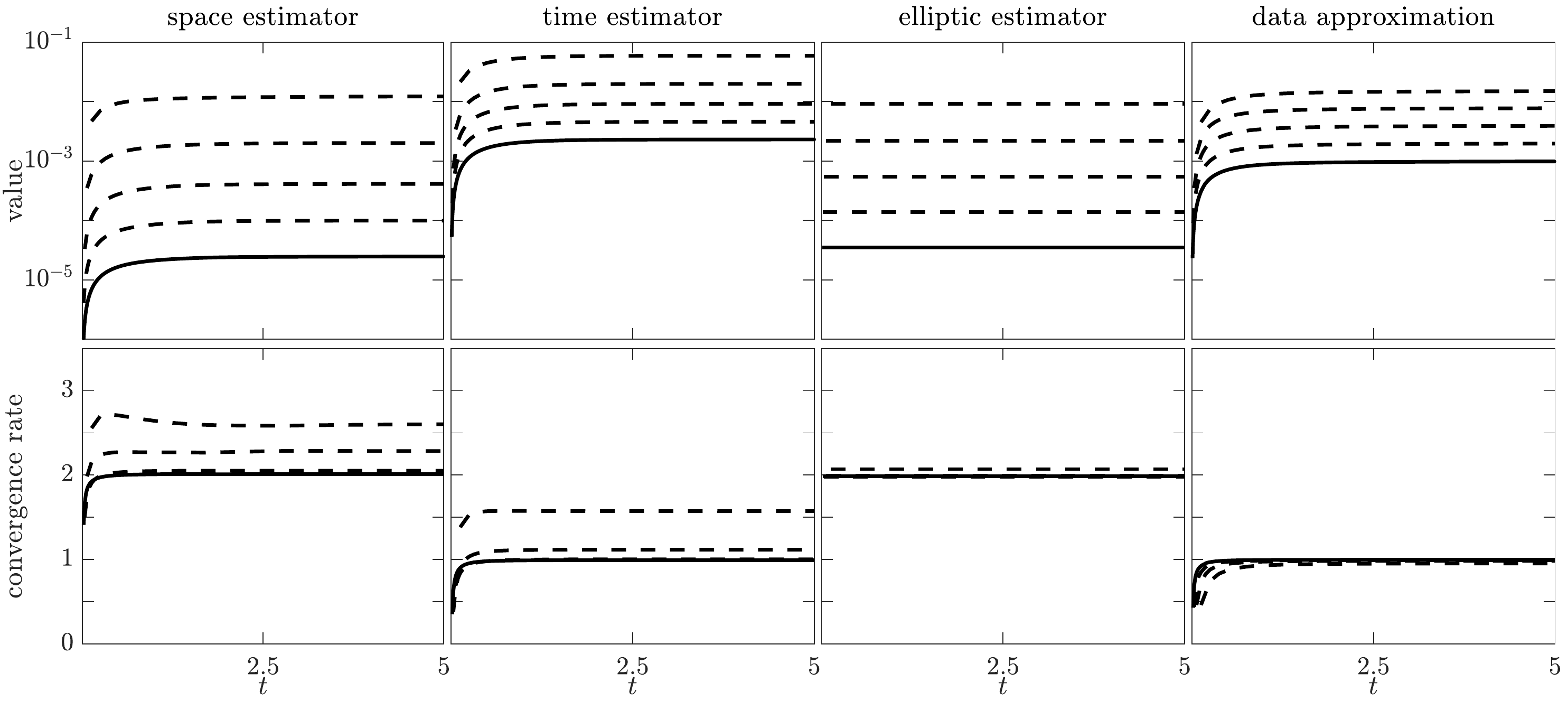}\vspace{-0.75em}}
  }
  \caption{Behaviour of the error and estimator for the moving mesh example~\eqref{eq:fem:numerics:hat} with $\tau \approx h$. Solid lines indicate results on the finest mesh.}
  \label{fig:fem:numerics:hat:1}
\end{figure}

\section{Application to virtual element discretisations}\label{sec:vem}

We now show how a virtual element method fits into the abstract framework above, and derive computable error estimates which are valid for general adaptive polygonal meshes.
In particular, the construction of the discrete function spaces used in the virtual element method means that even hierarchical refinement of the mesh does not lead to hierarchical sequences of spaces.
Table~\ref{table:vem:discreteComponents} provides an outline of the construction of the method's components.

\begin{table}
\begin{tabular}{c|c|c}
	Item & Conditions & Construction
	\\ \hline
	Mesh & Required by~\ref{item:abstractFramework:mesh} & Section~\ref{sec:vem:mesh}
	\\ 
	Discrete space & Required by~\ref{item:abstractFramework:fespace}, satisfying~\ref{item:approximationProperties:interpolationEstimates}
 & Section~\ref{sec:vem:spaces}
	\\ 
	Bilinear forms & Required by~\ref{item:abstractFramework:bilinearForms}, satisfying~\ref{item:approximationProperties:inconsistencyEstimate} & Section~\ref{sec:vem:bilinearForms}
	\\ 
	Forcing data & Required by~\ref{item:abstractFramework:forcingTerm}, satisfying~\eqref{eq:approximationProperties:dataApproximation} & Section~\ref{sec:vem:bilinearForms}
	\\ 
	Mesh transfer operator & Required by~\ref{item:abstractFramework:meshTransfer} & Section~\ref{sec:vem:meshtransfer}
	\\ 
	Elliptic estimates & Satisfying~\ref{item:ellipticEstimates:singleMesh} and~\ref{item:ellipticEstimates:twoMesh} & Section~\ref{sec:vem:ellipticEstimates}
\end{tabular}
\caption{The sections in which the components of a virtual element method satisfying the conditions of the abstract discrete scheme are constructed.}
\label{table:vem:discreteComponents}
\end{table}

\subsection{Mesh}\label{sec:vem:mesh}
The virtual element method may be applied in the context of very general polygonal or polyhedral meshes satisfying the following conditions.
\begin{assumption}[Polytopic mesh]
	We suppose that the mesh $\mesh[\curr]$ satisfies Assumption~\ref{item:abstractFramework:mesh} alongside
	\begin{enumerate-assumption}
		\item \label{item:vem:mesh:starshaped}
			Every element $\E$ of $\mesh[\curr]$ is star-shaped with respect to a ball of radius $\meshReg h_{\E}$;
		\item For $\spacedim=3$, every side $\side \in \sides[\curr]$ viewed as a $2$-dimensional element satisfies conditions~\ref{item:abstractFramework:mesh} and~\ref{item:vem:mesh:starshaped}.
	\end{enumerate-assumption}
\end{assumption}
For simplicity, these assumptions will be used in the subsequent analysis although they are more restrictive than is required in practice~\cite{Brenner:2018ce,BeiraodaVeiga:2017fh}.

\subsection{Local projection operators}\label{sec:vem:projectors}
The construction of the method hinges around the construction of elementwise $H^1$- and $L^2$-orthogonal projection operators onto piecewise polynomials, defined as follows.
For an element $\E \in \mesh[\curr]$ and polynomial degree $\ell \in \mathbb{N}$,
let $\Proj{\E}{\nabla,\ell} :H^1(\E)\rightarrow\PE{\k}$ denote
the $H^1(\E)$-orthogonal projector onto polynomials of degree $\ell$ (when $\ell = \k$, we will use the abbreviation $\Proj{\E}{\nabla} = \Proj{\E}{\nabla, \k}$), defined such that, for any $\trialfn \in H^1(\E)$,
\begin{align*}
	(\nabla \trialfn - \nabla \Proj{\E}{\nabla,\ell} \trialfn, \nabla p)_{\E} = 0, \quad \text{ for all } p \in \PE{\k},
\end{align*}
and
$\intdE (\trialfn - \Proj{\E}{\nabla,\ell} \trialfn) \dS = 0$
 for
 $\ell = 1$,
 or
$\intE (\trialfn - \Proj{\E}{\nabla,\ell} \trialfn) \dx = 0$
 otherwise.

The $L^2(\E)$-orthogonal projector $\Proj{\E}{\ell} : L^2(\E) \to \PE{\ell}$ of $\trialfn \in L^2(\E)$ satisfies
\begin{align*}
	(\trialfn - \Proj{\E}{\ell} \trialfn, p)_{\E} = 0, \quad \text{ for all } p \in \PE{\ell},
\end{align*}
and we define $\Proj{\curr}{\ell} : L^2(\domain) \to \P{\curr}{\ell}$ as
$
	(\Proj{\curr}{\ell} \trialfn)|_{\E} = \Proj{\E}{\ell} (\trialfn|_{\E})
$
for each $\E \in \mesh[\curr]$.
The following approximation properties for $\Proj{\E}{\ell}$ on star-shaped domains $\E$ are provided by~\citer{Brenner:2008tq}.
\begin{theorem}[Projection error estimate]
	\label{thm:polynomialApproximation}
	Let $\ell \geq 0$ be an integer and $1 \leq \reg \leq \ell+1$.
	Then, there exists a positive constant $\Cproj$ depending only on $\ell$ and the mesh regularity such that, for any $\E \in \mesh[\curr]$ and $\trialfn \in H^{\reg}(\E)$, we have
	\begin{equation*}
		\norm{\trialfn - \Proj{\E}{\ell} \trialfn}_{0,\E} + h_{\E} \abs{\trialfn - \Proj{\E}{\ell} \trialfn}_{1,\E} \leq \Cproj h_{\E}^\reg \abs{\trialfn}_{\reg,\E}.
	\end{equation*}
\end{theorem}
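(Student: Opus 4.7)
The plan is to combine a Bramble--Hilbert/Dupont--Scott polynomial approximation result on star-shaped domains with the best-approximation property of the $L^2$-orthogonal projector and a polynomial inverse inequality.

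First, I would invoke the Dupont--Scott averaged Taylor polynomial construction on the ball $B \subset \E$ with respect to which $\E$ is star-shaped (with radius $\geq \meshReg h_{\E}$): this produces a polynomial $q \in \PE{\ell}$ such that, for every $\trialfn \in H^{\reg}(\E)$ with $1 \leq \reg \leq \ell+1$,
\begin{equation*}
	\norm{\trialfn - q}_{0,\E} + h_{\E} \abs{\trialfn - q}_{1,\E} \leq C_1 h_{\E}^{\reg} \abs{\trialfn}_{\reg,\E},
\end{equation*}
where $C_1$ depends only on $\ell$ and the star-shapedness parameter $\meshReg$. The standard derivation uses the representation of $\trialfn - q$ as an integral of higher derivatives over $B$ and a change of variables exploiting that $B$ has comparable diameter to $\E$.

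Second, I would obtain the $L^2$ bound immediately from the defining orthogonality of $\Proj{\E}{\ell}$: since $\Proj{\E}{\ell}$ is the $L^2(\E)$ best approximation onto $\PE{\ell}$,
\begin{equation*}
	\norm{\trialfn - \Proj{\E}{\ell} \trialfn}_{0,\E} \leq \norm{\trialfn - q}_{0,\E} \leq C_1 h_{\E}^{\reg} \abs{\trialfn}_{\reg,\E}.
\end{equation*}

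Third, for the $H^1$ seminorm I would write $\trialfn - \Proj{\E}{\ell} \trialfn = (\trialfn - q) - (\Proj{\E}{\ell}\trialfn - q)$ and use the triangle inequality together with a polynomial inverse estimate applied to the polynomial $\Proj{\E}{\ell}\trialfn - q \in \PE{\ell}$, namely
\begin{equation*}
	\abs{\Proj{\E}{\ell}\trialfn - q}_{1,\E} \leq C_2 h_{\E}^{-1} \norm{\Proj{\E}{\ell}\trialfn - q}_{0,\E},
\end{equation*}
valid on domains star-shaped with respect to a ball of comparable size with constant $C_2$ depending only on $\ell$ and $\meshReg$. Then
\begin{equation*}
	\norm{\Proj{\E}{\ell}\trialfn - q}_{0,\E} \leq \norm{\trialfn - \Proj{\E}{\ell}\trialfn}_{0,\E} + \norm{\trialfn - q}_{0,\E} \leq 2 C_1 h_{\E}^{\reg} \abs{\trialfn}_{\reg,\E},
\end{equation*}
and combining yields $h_{\E}\abs{\trialfn - \Proj{\E}{\ell}\trialfn}_{1,\E} \leq (C_1 + 2 C_1 C_2) h_{\E}^{\reg} \abs{\trialfn}_{\reg,\E}$, completing the bound with $\Cproj = C_1 + 2 C_1 C_2$.

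The main obstacle is verifying that the constants $C_1$ and $C_2$ depend on $\E$ only through $\meshReg$ and $\ell$, and not on any affine reference-element mapping (which is unavailable for general polytopic elements). This requires either quoting the star-shaped Bramble--Hilbert result and a star-shaped polynomial inverse inequality directly (as is done in Brenner--Scott and in the polygonal FEM literature), or else working intrinsically on $\E$ using scaling by $h_{\E}$ and the fact that the inscribed ball has radius comparable to $h_{\E}$. Once the geometric dependence of these two auxiliary constants is controlled by $\meshReg$ alone, the rest of the argument reduces to combining the bounds as above.
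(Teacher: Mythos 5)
Your argument is correct, and it is essentially the argument behind the result as the paper uses it: the paper offers no proof of its own but simply cites Brenner--Scott, and your combination of the Dupont--Scott/Bramble--Hilbert estimate on star-shaped domains, the $L^2$ best-approximation property, and a polynomial inverse inequality with constants controlled by the star-shapedness parameter is exactly the standard derivation underlying that citation. As a minor simplification, you can avoid the factor-of-two step by noting that $\Proj{\E}{\ell}q = q$, so $\norm{\Proj{\E}{\ell}\trialfn - q}_{0,\E} = \norm{\Proj{\E}{\ell}(\trialfn - q)}_{0,\E} \leq \norm{\trialfn - q}_{0,\E}$.
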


\subsection{Virtual element space}\label{sec:vem:spaces}
We now recall the construction of the local virtual element space of order $\k \in {\mathbb N}$; see~\cite{Ahmad:2013uaa,Cangiani:2016vh} for further details.
On each element $\E \in \mesh[\curr]$, the local virtual element space $\Vh[\E]$ consists of a subspace $\PE{\k}$ of polynomials complemented by a subspace of non-polynomial functions which are implicitly defined as solutions to local boundary value problems.
The key to the virtual element methodology is that these extra non-polynomial \emph{virtual} functions never need to be known explicitly, ensuring that the local boundary value problems are never solved in practice.
Instead, the virtual element functions are only accessed through a set of \emph{degrees of freedom} of the following types.

\begin{definition}[Degrees of freedom]
  \label{def:dofs}
  Let $\omega\subset\Re^\spacedim$, $1\le \spacedim\le 3$, be a $\spacedim$-dimensional
  polytope (i.e. a line segment, polygon, or polyhedron,
  respectively). For any sufficiently regular function $v$ on $\omega$, we define the following types of \emph{degrees of freedom}:
  \begin{itemize}
  \item $\NO{\omega}$ are the \emph{nodal values}. For a vertex $\vertex$
    of $\omega$, $\NO{\omega}_\vertex (v):=v(\vertex)$ and
    $\NO{\omega}:=\{\NO{\omega}_\vertex: \vertex \text{ is a
      vertex}\}$;
  \item $\MO{\omega}{l}$ are the \emph{polynomial moments} up to order
    $\ell \ge 0$:
    \begin{equation*}
      \MO{\omega}{\mindex}(v) = \frac{1}{\abs{\omega}} (v, m_{\mindex})_\omega
      \quad\text{ with }\quad
      m_{\mindex}
      :=
      \left(\frac{ \vx - \vx_{\omega}}{h_{\omega}}\right)^{\mindex}
      \text{ and}\quad
      \abs{\mindex} \le \ell,
    \end{equation*}
    where $\mindex$ is a multi-index with $\abs{\mindex} := \alpha_1
    +\cdots +\alpha_\spacedim$ and $x^{\mindex} := x_1^{\alpha_1} \dots
    x_\spacedim^{\alpha_\spacedim}$ in a local coordinate system, and $x_\omega$
    denotes the barycentre of $\omega$. Further,
    $\MO{\omega}{\ell}=\{\MO{\omega}{\mindex}:\abs{\mindex} \leq \ell \}$
		and
    $\MO{\omega}{-1}:=\emptyset$.
  \end{itemize}
\end{definition}

The construction of the space is recursive in the spatial dimension.
We start with a line segment $\edge$ and define $\Vh[\edge] = \P{\edge}{\k}$, which is described by the degrees of freedom
\begin{align*}
	\operatorname{DoF}(\Vh[\edge])
	:=
	\{
		\NO{\edge},
		\quad
		\MO{\edge}{\k-2}
	\}.
\end{align*}
Then, for a polytope $\E \subset \Re^{\spacedim}$ with $\spacedim = 2,3$, the space $\Vh[\E]$ is defined (recursively) as
\begin{align*}
	\VhE
	:=
	\Big\{
	  	v \in H^1(\E) :
			\,\,
		&\Delta v \in \PE{\k},
		\quad
		v|_{\side} \in \Vh[\side] \text{ for all } \side \in \sides[\E],
		\quad
		v|_{\partial\E} \in C^0(\partial\E),
		\\
		&\text{ and }
		\MO{\E}{\mindex}(v-\Proj{\E}{\nabla} v) = 0
		\text{ for all } \abs{\mindex} = \k, \k-1
	\Big\},
\end{align*}
and the set of degrees of freedom describing this space is also defined recursively as
\begin{align*}
	\operatorname{DoF}(\VhE)
	:=
	\bigcup_{\side \in \sides[\E]}
		\operatorname{DoF}(\Vh[\side])
	\cup
		\MO{\E}{\k-2},
\end{align*}
with the convention that degrees of freedom associated with vertices and edges are only counted once, even when they are shared by several edges or faces.
The global space on the mesh $\mesh[\curr]$ is constructed from these local spaces as in~\eqref{eq:globalSpaceDefinition}, and the global degrees of freedom are obtained by collecting the local ones, with the same convention for shared vertices, edges and faces as above.
The fact that these degrees of freedom are unisolvent for the space is proved by~\citers{Ahmad:2013uaa,Cangiani:2016vh,Sutton:2017vc}, for example.
The approximation properties required by Assumption~\ref{item:approximationProperties:interpolationEstimates} are guaranteed by~\cite[Theorem 4.8]{Cangiani:2016vh}, \cite[Theorem 11]{Cangiani:2016ug}, and~\cite[Chapter 3]{Sutton:2017vc}.

To make it explicit that the local boundary value problem defining this space never needs to be solved, the method is constructed with the following notion of \emph{computability}, which is satisfied~\cite{Cangiani:2016vh} by the projectors
$\Proj{\E}{\nabla} v$, $\Proj{\E}{\k} v$,
and $\Proj{\E}{\k-1} \nabla v$ (defined componentwise) for $v \in \VhE$.
We refer to~\citers{Sutton:2017dp,BeiraodaVeiga:2014hza}, for instance, for details on how such a scheme may be implemented in practice.
\begin{definition}[Computability]
  \label{def:vem:computability}
  A term is \emph{computable} if it may be evaluated using only the problem data, the degrees of freedom, and operations on polynomials.
\end{definition}

\subsection{Discrete bilinear forms}\label{sec:vem:bilinearForms}
To discretise the forcing data we take $\brokenProj[\curr] = \Proj{\curr}{\k}$, so that $\forceProj[\curr] = \Proj{\curr}{\k} \force[\curr]$.
Assumption~\ref{item:abstractFramework:forcingTerm} is therefore satisfied because Theorem~\ref{thm:polynomialApproximation} ensures that
\begin{align*}
	(\force[\curr] - \forceProj[\curr], \testfn) = (\force[\curr] - \forceProj[\curr], \testfn - \Proj{\curr}{\k} \testfn) \leq \Cproj \Cequiv \norm{h_{\curr} (\force[\curr] - \forceProj[\curr])} \triplenorm{\testfn}.
\end{align*}
The local virtual element discrete bilinear forms on $\E \in \mesh[\curr]$ are defined as
	\begin{align*}
		\Ah[\curr]_{\E}(\trialfn, \testfn)
		&=
		(\diff  \Proj{\E}{\k-1} \nabla \trialfn,  \Proj{\E}{\k-1} \nabla \testfn)_{\E}
		+
		(\reac  \Proj{\E}{\k} \trialfn,  \Proj{\E}{\k} \testfn)_{\E}
		+
		\SaE(\trialfn - \Proj{\E}{\k} \trialfn,  \testfn - \Proj{\E}{\k} \testfn),
		\\
		\mh[\curr]_{\E}(\trialfn, \testfn) &= (\Proj{\E}{\k} \trialfn,  \Proj{\E}{\k} \testfn)_{\E} + \SmE(\trialfn - \Proj{\E}{\k} \trialfn,  \testfn - \Proj{\E}{\k} \testfn),
	\end{align*}
	 for $\trialfn, \testfn \in \Vh[\E]$, where the \emph{stabilising terms} $\SaE, \SmE : \Vh[\E] \times \Vh[\E] \to \Re$ are given by
	\begin{align*}
		\SaE(\trialfn, \testfn) = \sigma \altStaE(\trialfn, \testfn)
		\quad\text{and}\quad
		\SmE(\trialfn, \testfn) = h_{\E}^{\spacedim} \altStaE(\trialfn, \testfn),
	\end{align*}
	with $\sigma = (\ellipLowerE\ellipUpperE)^{1/2} h_{\E}^{\spacedim - 2} + (\reacLowerE \norm{\reac}_{\infty,\E}  )^{1/2}h_{\E}^{\spacedim}$
	where $\reacLowerE, \ellipLowerE$ and $\ellipUpperE$ denote the local counterparts of $\reacLower, \ellipLower$ and $\ellipUpper$ on $\E$,
	and $\altStaE$ is the Euclidean product between vectors of degrees of freedom.
	The bilinear forms $\mh[\curr]$ and $\Ah[\curr]$ are then given by~\eqref{eq:abstractGlobalBilinearForms}.

The bilinear forms are coercive and continuous in the local discrete (semi-)norms
\begin{align*}
	\triplenorm{\trialfn}_{h, \E}^2 = \Ah[\curr]_{\E}(\trialfn, \trialfn)
	\quad\text{ and }\quad
	\norm{\trialfn}_{h,\E}^2 = \mh[\curr]_{\E}(\trialfn, \trialfn),
\end{align*}
and are \emph{stable} since there exists a constant $\Cstab > 0$ such that, for any $\E \in \mesh[\curr]$,
\begin{align*}
	\Cstab^{-1} \triplenorm{\trialfn}_{\E}^2
	\leq
	\triplenorm{\trialfn}_{h, \E}^2
	\leq
	\Cstab \triplenorm{\trialfn}_{\E}^2
	\quad\text{ and }\quad
	\Cstab^{-1} \norm{\trialfn}_{\E}^2
	\leq
	\norm{\trialfn}_{h,\E}^2
	\leq
	\Cstab \norm{\trialfn}_{\E}^2,
\end{align*}
for all $\trialfn \in \Vh[\E]$.
The bilinear forms also offer \emph{polynomial consistency} such that
\begin{align*}
	\Ah[\curr]_{\E}(q, p) = \A_{\E}(q, p)
	\quad\text{ and }\quad
	\mh[\curr]_{\E}(\trialel, p) = (\trialfn, p)_{\E},
\end{align*}
for all $q, p \in \PE{\k}$.
The polynomial consistency property holds by construction, while the stability property may be proven as in \citers{Cangiani:2016vh,Sutton:2017vc}.
Together, these properties imply the following computable estimate for the inconsistency, which may be proven by arguing as in \citer{Cangiani:2016ug}.

\begin{lemma}[Inconsistency estimate]\label{lem:vem:inconsistencyEstimate}
	Let $\reg \in \{0,1\}$, $\trialcurr \in \Vh[\curr]$ and $\E \in \mesh[\curr]$.
	Suppose that $\diff \in (W^{\reg+1,\infty}(\E))^{\spacedim \times \spacedim}$ and $\reac \in W^{\reg+1, \infty}(\E)$.
	Then, the local virtual element bilinear forms satisfy~\ref{item:approximationProperties:inconsistencyEstimate}
	 with
	\begin{align*}
		(\projErrorLTwo[\E] \trialcurr)^2 = (1 + \Cstab) \norm{ \trialcurr - \Proj{\E}{\k} \trialcurr}_{h, \E}^2,
	\end{align*}
	and
	\begin{align*}
		(\projErrorEnergy[\E] \trialcurr)^2
		=
			C_{\A}
			\Big(&
			\norm{ \big( \Id - \Proj{\E}{\k-1} \big) \diff \Proj{\E}{\k-1} \nabla \trialcurr }_{\E}^2
			+
			\norm{\big( \Id - \Proj{\E}{\k} \big) \reac \Proj{\E}{\k} \trialcurr }_{\E}^2
			\\&
			+
			\triplenorm{\big( \Id - \Proj{\E}{\k-1} \big)\trialcurr}_{h, \E}^2
\Big),
	\end{align*}
	for each $\E \in \mesh[\curr]$, where
	$C_{\A}
	=
	1
	+
	\Cstab
	\Big(\Big(
		\frac{1}{2}
		+
		\frac{1}{2} \Big(\frac{\ellipUpperE}{\ellipLowerE}\Big)^2
		+
		\Big(\frac{1}{\ellipLowerE}\Big)^2
		+
		\Cpf^2
	\Big)\Big)^{1/2}$.
\end{lemma}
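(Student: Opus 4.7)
The proof proceeds by establishing each of the two inequalities of Assumption~\ref{item:approximationProperties:inconsistencyEstimate} separately, in both cases exploiting: (i) the $L^2(\E)$-orthogonality of the polynomial projectors $\Proj{\E}{\k}$ and $\Proj{\E}{\k-1}$; (ii) the polynomial consistency of $\mh[\curr]_\E$ and $\Ah[\curr]_\E$; and (iii) the stability equivalences $\Cstab^{-1}\norm{\cdot}_\E^2 \leq \norm{\cdot}_{h,\E}^2 \leq \Cstab \norm{\cdot}_\E^2$ together with the analogous bound relating $\triplenorm{\cdot}_{h,\E}$ and $\triplenorm{\cdot}_\E$.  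The constant $C_\A$ will emerge from careful bookkeeping of the accumulated coefficient ratios.

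For the mass inconsistency, $L^2$-orthogonality of $\Proj{\E}{\k}$ directly rewrites $(w^\curr, v^\curr)_\E - (\Proj{\E}{\k} w^\curr, \Proj{\E}{\k} v^\curr)_\E = ((\Id - \Proj{\E}{\k}) w^\curr, (\Id - \Proj{\E}{\k}) v^\curr)_\E$. Combined with the remaining stabiliser contribution $-\SmE((\Id - \Proj{\E}{\k}) w^\curr, (\Id - \Proj{\E}{\k}) v^\curr)$, and using the observation that $\norm{\cdot - \Proj{\E}{\k}\cdot}_{h,\E}^2$ collapses to precisely the stabiliser evaluation (since $\Proj{\E}{\k}$ annihilates the discrepancy), Cauchy--Schwarz on each summand together with $\norm{\cdot}_\E \leq \Cstab^{1/2}\norm{\cdot}_{h,\E}$ yields the claimed bound with constant $1 + \Cstab$.

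For the energy inconsistency, I split $\A_\E - \Ah[\curr]_\E$ into diffusion, reaction, and stabiliser contributions. Each of the first two is treated by decomposing both arguments into polynomial and oscillation parts relative to $\Proj{\E}{\k-1}$ (respectively $\Proj{\E}{\k}$) and using $L^2$-orthogonality to eliminate the polynomial--polynomial cross-terms. This leaves a symmetric ``fully oscillatory'' piece together with two oscillation--error cross-terms involving exactly the target quantities $(\Id - \Proj{\E}{\k-1})(\diff \Proj{\E}{\k-1}\nabla \cdot)$ and $(\Id - \Proj{\E}{\k})(\reac \Proj{\E}{\k}\cdot)$. Cauchy--Schwarz applied in $\Re^2$ then assembles these contributions into the desired symmetric product form $(\projErrorEnergy[\E] w^\curr)(\projErrorEnergy[\E] v^\curr)$. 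The remaining ``polynomial gradient error'' $\norm{\nabla \cdot - \Proj{\E}{\k-1}\nabla \cdot}_\E$ and the $L^2$ error $\norm{(\Id - \Proj{\E}{\k})\cdot}_\E$ are converted into $\triplenorm{(\Id - \Proj{\E}{\k-1})\cdot}_{h,\E}$ by invoking the $L^2$-optimality of $\Proj{\E}{\k-1}$ against $\nabla \Proj{\E}{\k-1}\cdot$, combined with coercivity of $\A_\E$ and the energy stability estimate; for the reaction contribution, the Poincar\'e--Friedrichs inequality applied to $(\Id - \Proj{\E}{\k-1})\cdot$ (whose zero mean on $\E$ makes $\Cpf$ natural) supplies the necessary bound. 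The stabiliser contribution is bounded directly by Cauchy--Schwarz in $\triplenorm{\cdot}_{h,\E}$.

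The principal technical subtlety is exactly this final conversion between the natural gradient-projection error and the virtual semi-norm $\triplenorm{(\Id - \Proj{\E}{\k-1})\cdot}_{h,\E}$, given that $\Proj{\E}{\k-1}\nabla\cdot \neq \nabla\Proj{\E}{\k-1}\cdot$ in general. Tracking the accumulated constants $\ellipLowerE$, $\ellipUpperE$, $\Cpf$, and $\Cstab$ through this chain of inequalities recovers the explicit form of $C_\A$ stated in the lemma.
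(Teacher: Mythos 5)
Your proposal is correct and follows essentially the same route as the paper, which does not spell out a proof but appeals to the argument of~\citer{Cangiani:2016ug}: cancel the polynomial--polynomial parts by polynomial consistency, insert $\big(\Id - \Proj{\E}{\k-1}\big)$ (resp.\ $\big(\Id - \Proj{\E}{\k}\big)$) via $L^2$-orthogonality to expose the computable oscillation terms, and convert the remaining projection errors into $\norm{\cdot - \Proj{\E}{\k}\cdot}_{h,\E}$ and $\triplenorm{\big(\Id - \Proj{\E}{\k-1}\big)\cdot}_{h,\E}$ through stability, coercivity, best approximation and a Poincar\'e bound, assembling everything with a vector Cauchy--Schwarz. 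Your mass-form estimate reproduces the constant $1+\Cstab$ exactly, and your energy-form argument identifies precisely the ingredients ($\Cstab$, $\ellipUpperE/\ellipLowerE$, $1/\ellipLowerE$, $\Cpf$) from which $C_{\A}$ is obtained by the deferred bookkeeping.
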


\subsection{Computable transfer operators}\label{sec:vem:meshtransfer}
Popular techniques for transferring functions between conventional finite element spaces, such as Lagrangian interpolation or $L^2$ projection, are not appropriate for virtual element functions which cannot be evaluated within elements.
Instead, we introduce \emph{computable transfer operators} (in the sense of Definition~\ref{def:vem:computability}) under the following assumption on the mesh modification.

\begin{assumption}[Coarsening and refinement]\label{ass:coarseandrefine}
Mesh modification is performed via a finite number of coarsening or refinement operations, i.e. such that contiguous patches $P \subset \mesh[\prev]$ of elements are agglomerated into a single element $\E \in \mesh[\curr]$, and individual elements $\E \in \mesh[\prev]$ are refined into patches $P \subset \mesh[\curr]$ of sub-elements such that every side on the boundary of $P$ may be expressed as a subset of a single side of $\E$.
\end{assumption}

In this setting, there are three particularly natural methods of transferring solutions between meshes, detailed below. 
Other techniques for mesh transfer are discussed in~\cite[Chapter 8]{Sutton:2017vc}.

\subsubsection{Local transfer operator}
We introduce local transfer operators which may be applied to either coarsened patches or refined elements, denoted  $\coarsening$ and $\refinement$ respectively.
A computable and inherently local transfer operator $\transfer[\curr] : \Vh[\prev] \to \Vh[\curr]$ may then be constructed by applying either $\coarsening$ or $\refinement$ on each element as required.

\begin{definition}[Local transfer operators]\label{def:ops}
	Let $P$ be a patch of elements and let $\E = \bigcup_{K \in P} K \subset\Re^{\spacedim}$, $\spacedim = 2,3$.
	Define
	\begin{align*}
		\Vh[P] = \Big\{ \vh \in H^1( E ) : \vh |_{K} \in \Vh[K] \text{ for each } K \in P \Big\}.
	\end{align*}

	The \emph{coarsening operator} $\coarsening : \Vh[P] \to \Vh[\E]$
	is defined to be the Lagrange interpolant.

	The \emph{refinement operator} $\refinement : \Vh[\E] \to \Vh[P]$  is defined for $\spacedim=2$ to satisfy
	\begin{align}
	\label{eq:projLocalproblem}
		\Ah[\curr]_{P}(\refinement \trialel, \testpatch)
		&=
		\Ah[\curr]_{\E}(\trialel, \coarsening \testpatch) \text{ for all } \testpatch\in \Vh[P] \cap H^1_0(P),
		\\
		\label{eq:projBoundary}
		(\refinement \trialel)|_{\partial P}
		&=
		\trialel|_{\partial P},
	\end{align}
	for any $\trialel \in \Vh[\E]$, where $\Ah[\curr]_{P} : \Vh[P] \times \Vh[P] \to \Re$ is the elliptic virtual element bilinear form on $P$ given by
	$\Ah[\curr]_{P}(\cdot, \cdot) := \sum_{K \in P} \Ah[\curr]_{K}(\cdot, \cdot)$.
	For $d=3$ the refinement operator is recursively defined as the $\spacedim=2$ construction on each face and to satisfy~\eqref{eq:projLocalproblem} in $\E$.
\end{definition}

Both operators $\coarsening$ and $\refinement$ are computable in the sense of Definition~\ref{def:vem:computability}.
For $\spacedim = 2$, the coarsening operator $\coarsening$ is computable because $\E = \cup_{K \in P} K$ and the boundaries of $\E$ and $P$ coincide; the extension to $\spacedim=3$ is analogous.
The computability of the refinement operator $\refinement$ then follows from that of $\Ah[\curr]_{P}$ and $\coarsening$.
These local transfer operators also preserve polynomials in the sense that $p = \refinement p = \coarsening p$ for $p \in \PE{\k}$, a property which is important to retain the approximation power of the solution under refinement.
Moreover, when $\spacedim = 2$, $\refinement$ does not change the edge values of virtual element functions by construction, although this is not true when $\spacedim=3$ due to the virtual nature of the face spaces.
The patch on which $\refinement$ modifies function values when $\spacedim = 3$ therefore includes the face neighbours of the modified element.
For either $\spacedim=2$ or 3, $\coarsening$ modifies virtual element functions on the neighbours of the coarsened patch unless the sides of $\E$ coincide with those of $P$.

\subsubsection{Polynomial projection}
Let $\mesh[\curr,\prev]$ denote the finest common coarsening of $\mesh[\curr]$ and $\mesh[\prev]$, such that every element of either mesh is a subset of an element of $\mesh[\curr,\prev]$, and let $\P{\curr,\prev}{\k}$ denote the space of polynomials with respect to this mesh.
Then, for any $\vh \in \Vh[\curr]$ or $\Vh[\prev]$, the $L^2$-orthogonal projector $\Proj{\curr,\prev}{\k} : L^2(\domain) \to \P{\curr,\prev}{\k}$ defined by
\begin{align}\label{eq:commonCoarseningProj}
	(\Proj{\curr,\prev}{\k} \vh - \vh, p) = 0 \qquad \text{ for all } p \in \P{\curr,\prev}{\k},
\end{align}
is computable directly from the degrees of freedom of $\vh$.
One may therefore adopt the numerical scheme: given $U^0 \in \Vh[0]$ approximating $u_0$, for each $\curr = 1,\ldots,N$ find $\ucurr \in \Vh[\curr]$ satisfying
\begin{align*}
	\frac{1}{\timestep^{\curr}} \Big( \mh[\curr] (\ucurr, \testcurr ) - (\Proj{\curr,\prev}{\k} \uprev, \testcurr) \Big) + \Ah[\curr](\ucurr, \testcurr) = \mh[\curr](\forceh[\curr], \testcurr)
	\quad \text{ for all }
	\testcurr \in \Vh[\curr],
\end{align*}
which may be expressed in the form of~\eqref{eq:fullyDiscreteScheme} by taking $\transfer[\curr] = \LTwoRecon[\curr] \Proj{\curr,\prev}{\k}$, and our analysis therefore also applies to this scheme.
We note, however, that this operator does not reduce to the identity operator when $\Vh[\curr] = \Vh[\prev]$, meaning that the components of the estimators measuring mesh transfer error will be non-zero even when the mesh is not modified.

\subsubsection{Elliptic transfer operator}
A counterpart of the elliptic transfer operator $\canonicalTransfer[\curr]$ of Definition~\ref{def:ellipticTransfer} can be constructed which is computable in the virtual element context. 
This is defined as the operator $\canonicalTransferVEM[\curr] : \Vh[\prev] \to \Vh[\curr]$
satisfying
\begin{align*}
	\Ah[\curr] (\canonicalTransferVEM[\curr] \trialprev, \testcurr)
	=
	-
	\big(
		\Proj{\curr,\prev}{\k} (\diffoph[\prev] \trialprev
		+
		(\LTwoRecon[\prev] - \identity) \brokenProj[\prev] \force[\prev]
		-
		(\LTwoRecon[\curr] - \identity) \brokenProj[\curr] \force[\prev] \big), \testcurr
	\big)
\end{align*}
for all $\testcurr \in \Vh[\curr]$,
with $\Proj{\curr,\prev}{\k}$ defined as in~\eqref{eq:commonCoarseningProj}.
This operator is computable because the right-hand side is just a product of (computable) polynomial projections.
We note, however, that this operator no longer reduces to the identity operator when $\Vh[\curr] = \Vh[\prev]$ and the fundamental relation of Definition~\ref{def:ellipticTransfer} is also no longer exactly satisfied.
Instead, we have
\begin{align}\label{eq:canonicalTransferVEM:property}
	&\A(\ellipReconLag{\curr}{\prev}{\canonicalTransferVEM[\curr] \trialprev}, \testcurr)
	=
	\A(\ellipReconLag{\prev}{\prev}{\trialprev}, \testcurr)
	+
	\inconsistencym[\curr](\diffoph[\curr] \canonicalTransferVEM[\curr] \trialprev + \LTwoRecon[\curr]\brokenProj[\curr] \force[\prev], \trialcurr )
	\\&\qquad\qquad
	+
	((\Id - \widehat{\Proj{}{}}^{\curr}_{\k}) \big( \diffoph[\prev] \trialprev
	+
	(\LTwoRecon[\prev] - \identity) \brokenProj[\prev] \force[\prev]
	-
	(\LTwoRecon[\curr] - \identity) \brokenProj[\curr] \force[\prev] \big), \testcurr ),
	\notag
\end{align}
which may be interpreted as providing the same property up to higher order terms.

\subsection{Computable error estimates}\label{sec:vem:ellipticEstimates}

Due to the abstract discrete setting in which they are developed, the error estimates of Section~\ref{sec:abstractFramework} remain valid in the virtual element context.
They cannot all be applied directly, however, because they are not computable in the sense of Definition~\ref{def:vem:computability}.
To estimate the terms in Definition~\ref{def:estimatorterms} computably, we bound $\timeest(t)$ by
\begin{align*}
	\timeest(t)
	&\leq
	\norm{(\Proj{\curr}{\k} \timederivh[\curr] \ucurr - \forceProj[\curr]) - (\Proj{\prev}{\k} \timederivh[\prev] \uprev - \forceProj[\prev])}
	\\&\qquad
	+
	\Cstab
	\Big(
		\norm{(\identity - \Proj{\curr}{\k}) \timederivh[\curr] \ucurr}_h
		+
		\norm{(\identity - \Proj{\prev}{\k}) \timederivh[\prev] \uprev}_h
	\Big),
\end{align*}
and $\meshchangeest(t)$ by
\begin{align*}
	\meshchangeest(t)
	\leq &
	\frac{1}{\timestep^{\curr}}
	\norm{\Proj{\prev}{\k}\uprev - \Proj{\curr}{\k} \transfer[\curr] \uprev}
	\\&+
	\frac{\Cstab}{\timestep^{\curr}}
	\Big(
		\norm{(\identity - \Proj{\curr}{\k}) 
			\transfer[\curr] \uprev
			}_h
		+
		\norm{(\identity - \Proj{\prev}{\k}) \uprev}_h
	\Big).
\end{align*}
By construction, the data estimators 
\begin{align*}
	\dataest{T}(t) = \norm{\force(t) - \force[\curr]}
	\quad\text{ and }\quad
	\dataest{S}(t) = \Cdata \norm{h_{\curr} (\force[\curr] - \forceProj[\curr])},
\end{align*}
remain computable.
The remaining non-computable terms are the elliptic estimators of Section~\ref{sec:abstract:ellipticEstimators}, for which we prove slightly modified versions in Lemma~\ref{lem:vem:ellipticEstimates}, to produce variants of Lemmas~\ref{lem:residualEstimates:singleMesh}
and~\ref{lem:residualEstimates:twoMesh:local} involving computable projected forms of the residual operators.
The single mesh elliptic error estimates we obtain are similar to those of~\citer{Cangiani:2016ug}.

\begin{lemma}[Elliptic error estimates for the virtual element method]\label{lem:vem:ellipticEstimates}
	For $m,\curr \in \{ 0,\dots,N \}$ and $\trialcurr \in \Vh[\curr]$,
	let $\jumpresidualvirt[\curr] : \Vh[\curr] \to L^2(\sides[\curr])$ denote the \emph{projected jump residual operator}
	\begin{align*}
		(\jumpresidualvirt[\curr] \trialcurr)|_{\side} = \jump{\diff \nabla \Proj{\curr}{\k} \trialcurr}_{\side},
	\end{align*}
	for each $\side \in \sides[\curr]$,
	which is extended by zero to the whole of $\domain$, and let $\elresidualvirtLag{\curr}{m}{} : \Vh[\curr] \to \Re$ denote the \emph{projected element residual operator}, given for each $\E \in \mesh[\curr]$ by
	\begin{align*}
		(\elresidualvirtLag{\curr}{m}{\trialcurr})|_{\E}
		&=
		\diffop \Proj{\E}{\k} ( \trialcurr |_{\E}) - \Proj{\E}{\k} (\ellipReconRHSLag{\curr}{m}{\trialcurr})|_{\E}
		\\
		&=
		\diffop \Proj{\E}{\k} ( \trialcurr |_{\E}) - \Proj{\E}{\k} (\diffoph[\curr] \trialcurr - (\LTwoRecon[\curr] - \identity) \Proj{\curr}{\k} \force[m])|_{\E}.
	\end{align*}
	Then, the estimates of Lemmas~\ref{lem:residualEstimates:singleMesh} and~\ref{lem:residualEstimates:twoMesh:local} hold with $\elresidualvirtLag{\curr}{\curr}{}$ and $\jumpresidualvirt[\curr]$ replacing $\elresidualLag{\curr}{\curr}{}$
	and $\jumpresidual[\curr]$ respectively, i.e.
	\begin{align*}
		\ellipest{L^2}^{\curr}(\trialcurr,
		 \force[\curr])
		&=
		\Cellip
		\big(
			\norm{h_{\curr}^2 \elresidualvirtLag{\curr}{\curr}{\trialcurr}}^2
			+
			\norm{h_{\curr}^{3/2} \jumpresidualvirt[\curr] \trialcurr }_{\sides[\curr]}^2
			+
			\inconGroup[\curr](\trialcurr, \force[\curr])^2
		\big)^{1/2},
		\\
		\ellipest{H^1}^{\curr}(\trialcurr,
		\force[\curr])
		&=
		\Cellip
		\big(
			\norm{h_{\curr} \elresidualvirtLag{\curr}{\curr}{\trialcurr}}^2
			+
			\norm{h_{\curr}^{1/2} \jumpresidualvirt[\curr] \trialcurr }_{\sides[\curr]}^2
			+
			\inconGroupEnergy[\curr](\trialcurr, \force[\curr])^2
		\big)^{1/2},
	\end{align*}
	and
	\begin{align*}
		\ellipest{L^2}^{\partial_t}(\trialcurr, \trialprev,
		 \force[\curr], \force[\prev])
		&=
		\frac{1}{\timestep^{\curr}}
		\Big(
		\norm{\transfer[\curr] \trialprev - \trialprev}^2
		\\&\qquad\quad
		+\Cellip
		\Big(
			\norm{h_{\curr}^2 (\elresidualvirtLag{\curr}{\curr}{\trialcurr} - \elresidualvirtLag{\prev}{\prev}{\trialprev})}^2
			 \notag
		\\&\qquad\qquad\qquad\quad
			+
			\norm{h_{\curr}^{3/2} (\jumpresidualvirt[\curr] \trialcurr - \jumpresidualvirt[\prev] \trialprev)}_{\sides[\curr] \cup \sides[\prev]}^2
		\notag
		\\&\qquad\qquad\qquad\quad
			+
			\norm{\hat{h}_{\prev, \curr}^2 \elresidualvirtLag{\prev}{\prev}{\trialprev}}_{\mesh[\prev] \setminus \mesh[\curr]}^2
		\notag
		\\&\qquad\qquad\qquad\quad
			+
			\norm{\hat{h}_{\prev, \curr}^{3/2} \jumpresidualvirt[\prev] \trialprev}_{\sides[\prev] \setminus \sides[\curr]}^2
		\notag
		\\&\qquad\qquad\qquad\quad
		+
    	\inconGroup[\curr](\trialcurr, \force[\curr])^2
	    +
	    \inconGroup[\prev](\trialprev, \force[\prev])^2
		\Big)\Big)^{1/2}.
		\notag
	\end{align*}
\end{lemma}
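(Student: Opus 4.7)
The plan is to follow the proofs of Lemmas~\ref{lem:residualEstimates:singleMesh} and~\ref{lem:residualEstimates:twoMesh:local} essentially verbatim, modifying only the single step of integration by parts that produces the strong-form residual $\diffop\trialcurr|_\E$ and the flux jumps $\jump{\diff\nabla\trialcurr}$ on $\side$. Because these quantities are not accessible elementwise for $\trialcurr \in \Vh[\curr]$ in the VEM setting, we replace $\trialcurr$ by its $L^2(\E)$-projection $\Proj{\E}{\k}\trialcurr$ (a polynomial) before integrating by parts, and similarly project the discrete right-hand side $\ellipReconRHSLag{\curr}{m}{\trialcurr}$ via $\Proj{\curr}{\k}$. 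The projection-correction terms thus created are then absorbed into the inconsistency estimators $\inconGroup[\curr]$ and $\inconGroupEnergy[\curr]$ already present on the right-hand sides of the target estimates.

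Concretely, starting from the representation
\[
	\A(\trialcurr - \ellipReconLag{\curr}{\curr}{\trialcurr}, v)
	=
	\sum_{\E \in \mesh[\curr]} \A_\E(\trialcurr, v) + (\ellipReconRHSLag{\curr}{\curr}{\trialcurr}, v),
\]
I would split $\trialcurr = \Proj{\E}{\k}\trialcurr + (\Id - \Proj{\E}{\k})\trialcurr$ elementwise and similarly split $\ellipReconRHSLag{\curr}{\curr}{\trialcurr} = \Proj{\curr}{\k}\ellipReconRHSLag{\curr}{\curr}{\trialcurr} + (\Id - \Proj{\curr}{\k})\ellipReconRHSLag{\curr}{\curr}{\trialcurr}$. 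Integration by parts on the polynomial pieces yields precisely $-(\elresidualvirtLag{\curr}{\curr}{\trialcurr}, v) + \sum_\side (\jumpresidualvirt[\curr]\trialcurr, v)_\side$. Choosing $v = \dualfn - \dualfnh[\curr]$ as the interpolant error of the dual solution and invoking Assumption~\ref{item:approximationProperties:interpolationEstimates} together with a scaled trace inequality produces the $\norm{h_\curr^2 \elresidualvirtLag{\curr}{\curr}{\trialcurr}}$ and $\norm{h_\curr^{3/2}\jumpresidualvirt[\curr]\trialcurr}_{\sides[\curr]}$ contributions. The leftover term $\sum_\E \A_\E((\Id - \Proj{\E}{\k})\trialcurr, \dualfn - \dualfnh[\curr])$ is controlled by $\projErrorEnergy[\curr]$ via Lemma~\ref{lem:vem:inconsistencyEstimate} with the appropriate $h$-weighting, while $((\Id - \Proj{\curr}{\k})\ellipReconRHSLag{\curr}{\curr}{\trialcurr}, \dualfn - \dualfnh[\curr])$ reduces by $L^2$-orthogonality to $((\Id - \Proj{\curr}{\k})(\diffoph[\curr]\trialcurr + \forceh[\curr]), (\Id - \Proj{\curr}{\k})(\dualfn - \dualfnh[\curr]))$, which fits into the $\projErrorLTwo[\curr](h_\curr^2, \diffoph[\curr]\trialcurr + \forceh[\curr])$ piece of $\inconGroup[\curr]$. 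Inconsistency of the elliptic reconstruction itself is handled as before via Lemmas~\ref{lem:ellipreconInconsistency} and~\ref{lem:globalInconsistencyEstimate}. The $H^1$ variant $\ellipest{H^1}^{\curr}$ follows by the same argument with $\dualfn - \dualfnh[\curr]$ replaced by a Clément interpolation error of the test function, yielding the lower $h$-weights.

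For the local mesh modification estimator, the same substitution strategy is applied to the terms $\term{3}$ and $\term{4}$ in the proof of Lemma~\ref{lem:residualEstimates:twoMesh:local}. The terms $\term{1}$ and $\term{2}$, which express the elliptic reconstruction inconsistency, are unchanged since they already pass through Lemmas~\ref{lem:ellipreconInconsistency} and~\ref{lem:globalInconsistencyEstimate}. The mesh-compatibility argument via Lemma~\ref{lem:meshCompatibility} that controls $\term{4}$ relies only on the fact that the interpolation difference vanishes on unmodified elements, and remains valid verbatim when the residuals on which it acts are the projected residuals $\elresidualvirtLag{\prev}{\prev}{\trialprev}$ and $\jumpresidualvirt[\prev]\trialprev$. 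The correction terms generated by projecting $\trialcurr$ and $\trialprev$ are then absorbed into $\inconGroup[\curr](\trialcurr, \force[\curr])$ and $\inconGroup[\prev](\trialprev, \force[\prev])$ respectively.

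The main obstacle is the bookkeeping needed to verify that every projection-correction term, arising either from $(\Id - \Proj{\E}{\k})\trialcurr$ or from $(\Id - \Proj{\curr}{\k})\ellipReconRHSLag{\curr}{m}{\trialcurr}$, indeed fits within the inconsistency quantities $\projErrorLTwo[\curr](h_\curr^s,\cdot)$ or $\projErrorEnergy[\curr](h_\curr^s,\cdot)$ with the correct power $s$ of the mesh size, so that no additional terms beyond $\inconGroup[\curr]$ or $\inconGroupEnergy[\curr]$ are needed on the right-hand side. This is ultimately made possible by the definition $\ellipReconRHSLag{\curr}{m}{\trialcurr} = \diffoph[\curr]\trialcurr + (\LTwoRecon[\curr] - \Id)\brokenProj[\curr]\force[m]$, which ensures the projection error of the right-hand side reduces exactly to quantities built from $\diffoph[\curr]\trialcurr + \forceh[\curr]$ already appearing in the single-mesh inconsistency estimator.
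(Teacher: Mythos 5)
Your proposal is correct and takes essentially the same route as the paper's proof: split the virtual element function and the reconstruction right-hand side into polynomial projections plus projection errors, integrate by parts only on the projected (polynomial) parts to generate the projected residuals $\elresidualvirtLag{\curr}{\curr}{}$ and $\jumpresidualvirt[\curr]$, absorb the projection-error and inconsistency remainders into $\inconGroup[\curr]$ and $\inconGroupEnergy[\curr]$ via Lemmas~\ref{lem:vem:inconsistencyEstimate}, \ref{lem:ellipreconInconsistency} and~\ref{lem:globalInconsistencyEstimate}, and treat the $H^1$ and $L^2$ cases with the quasi-interpolant and a duality argument, respectively. The time derivative estimator is likewise obtained, as in the paper, by repeating the argument of Lemma~\ref{lem:residualEstimates:twoMesh:local} with the projected residuals in place of the exact ones.
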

\begin{proof}
	The definition of the elliptic reconstruction and Lemma~\ref{lem:ellipreconInconsistency} give
	\begin{align*}
		\A(\trialcurr - \ellipReconLag{\curr}{\curr}{\trialcurr}, \testfn)
		&
		=
		(\diffoph[\curr] \trialcurr - \forceh[\curr] + \forceProj[\curr], \testfn - \testcurr)
		+
		\A(\trialcurr, \testfn - \testcurr)
		\\&\qquad
		+
		\inconsistencya[\curr](\trialcurr, \testcurr)
		+
		\inconsistencym[\curr](\diffoph[\curr] \trialcurr + \forceh[\curr], \testcurr),
	\end{align*}
	where $\testcurr \in \Vh[\curr]$ denotes the quasi-interpolant of $\testfn$ satisfying~\eqref{eq:quasiInterpolation}.
	Introducing projectors and using the fact that $\forceProj[\curr] = \Proj{\curr}{\k} \force[\curr]$, this becomes
	\begin{align*}
		\A(\trialcurr - \ellipReconLag{\curr}{\curr}{\trialcurr}, \testfn)
		&
		=
		( \Proj{\curr}{\k} (\diffoph[\curr] \trialcurr + \forceh[\curr] - \force[\curr]), \testfn - \testcurr)
		+
		\A( \Proj{\curr}{\k} \trialcurr, \testfn - \testcurr)
		\\&\quad
		+
		( (\Id - \Proj{\curr}{\k}) (\diffoph[\curr] \trialcurr + \forceh[\curr]), \testfn - \testcurr)
		+
		\A( \trialcurr - \Proj{\curr}{\k} \trialcurr, \testfn - \testcurr)
		\\&\quad
		+
		\inconsistencya[\curr](\trialcurr, \testcurr)
		+
		\inconsistencym[\curr](\diffoph[\curr] \trialcurr + \forceh[\curr], \testcurr).
	\end{align*}
	Using Lemma~\ref{lem:vem:inconsistencyEstimate} and the space approximation bounds~\eqref{eq:quasiInterpolation}, the final four terms of this may be bounded by
	\begin{align*}
		\widehat{C}
		\inconGroup[\curr](\trialcurr, \force[\curr])
		\abs{\testfn}_1,
	\end{align*}
	where $\inconGroup[\curr]$ is given in Lemma~\ref{lem:residualEstimates:singleMesh}
	and
	$\widehat{C} = \max \big\{
		\alpha \frac{\Cclem\Cstab}{1 + \Cstab} + \Cglob,
		\alpha \frac{\Cstab \Cequiv \Cclem}{C_{\A}} + \Cglob
	\big\}$ with
	$C_{\A}$ from Lemma~\ref{lem:vem:inconsistencyEstimate} and $\alpha$ depending only on the mesh regularity.
	Integration by parts therefore gives
	\begin{align*}
		\A(\ellipReconLag{\curr}{\curr}{\trialcurr} - \trialcurr, \testfn)
		&
		\leq\!
		\sum_{\E \in \mesh[\curr]}
			(\elresidualvirtLag{\curr}{\curr}{\trialcurr}, \testfn - \testcurr)_{\E}
		+\!
		\sum_{\side \in \sides[\curr]}
			(\jumpresidualvirt[\curr], \testfn - \testcurr)_{\side}
		+
		\widehat{C}
		\inconGroup[\curr](\trialcurr, \force[\curr])
		\abs{\testfn}_1,
	\end{align*}
	and the energy estimate follows by applying the Cauchy-Schwarz inequality and the scaled trace inequality, using~\eqref{eq:quasiInterpolation}, and selecting $\testfn = \ellipReconLag{\curr}{\curr}{\trialcurr} - \trialcurr$.

	The $L^2$ norm estimate follows via a duality argument using similar arguments.

	A computable estimate for the elliptic reconstruction time derivative error may be proven by combining the arguments proving Lemma~\ref{lem:residualEstimates:twoMesh:local} with those above.
\end{proof}

Although we do not pursue it here, the global mesh modification estimate of Lemma~\ref{lem:residualEstimates:twoMesh:global} can also be translated into this context using the above counterpart $\canonicalTransferVEM[\curr]$ of the elliptic transfer operator $\canonicalTransfer[\curr]$.
The resulting estimate is analogous to Lemma~\ref{lem:residualEstimates:twoMesh:global} but with projected residuals and the extra inconsistency terms appearing in~\eqref{eq:canonicalTransferVEM:property}.

\subsection{Numerical experiments}\label{sec:vem:numerics}

We now demonstrate the practical performance of the virtual element error estimates presented in the previous sections on a challenging set of numerical experiments.

\subsubsection{Convergence tests}
We begin by exploring the convergence properties of the estimates above when applied to the virtual element discretisation of the model parabolic problem~\eqref{eq:modelHeatEquation} with $\Omega = [0,1]^2$, $\alpha = 1$, and data fixed in accordance with the exact solution
\begin{align}\label{eq:vem:solution:oscillating}
	u(x,y,t) = \sin(5 \pi t) \sin(\pi x) \sin(\pi y),
\end{align}
which we refer to as the \emph{oscillating solution}.
The simulations, indexed by $i \in \mathbb{Z}$, use a fixed spatial mesh of $2^{2i}$ square elements with diameter $h_i = 2^{1/2-2i}$ linked to the time-step size $\timestep_i$.
We plot the $L^2(0, t; H^1(\domain))$ and $L^{\infty}(0,t; L^2(\domain))$ errors and estimators alongside the separate estimator components; see Section~\ref{sec:fem} for a full description of the plotted quantities.
The results for $\timestep_i = h_i$ with $i \in \{2,3,4,5,6,7,8\}$ and $\timestep_i = h_i^2$ with $i \in \{2,3,4,5,6\}$ are plotted in Figures~\ref{fig:vem:numerics:oscillating:1} and \ref{fig:vem:numerics:oscillating:2} respectively.

\begin{figure}%
  \centering{%
  \subcaptionbox{Computed and estimated errors}[\textwidth]{\includegraphics[width=0.9\textwidth]{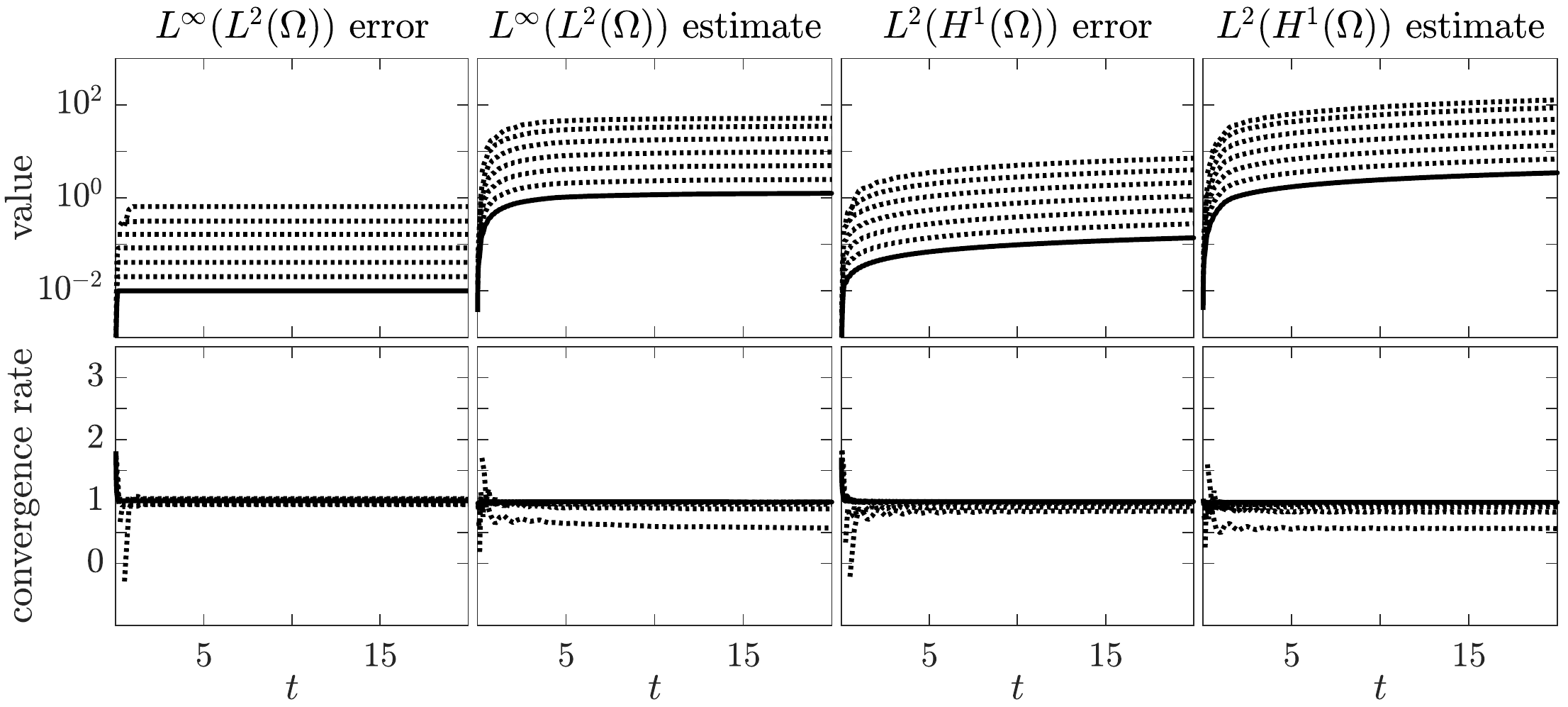}\vspace{-0.75em}}
  \vspace{1em}

  \subcaptionbox{{Components of the $L^2(0,t; H^1(\domain))$ estimator}}[\textwidth]{\includegraphics[width=0.9\textwidth]{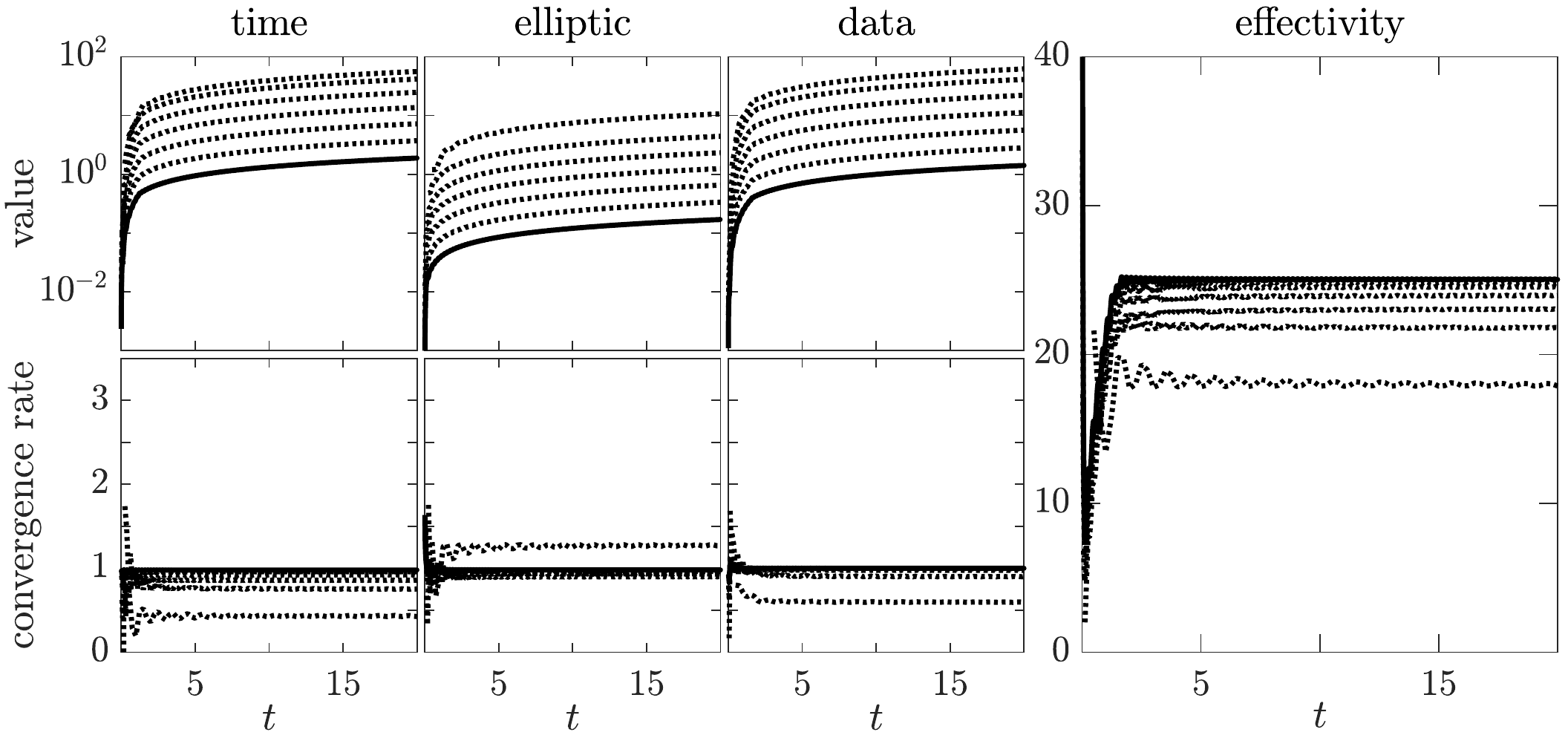}\vspace{-0.75em}}
  \vspace{1em}

  \subcaptionbox{{Components of the $L^{\infty}(0,t; L^2(\domain))$ estimator}}[\textwidth]{\includegraphics[width=0.9\textwidth]{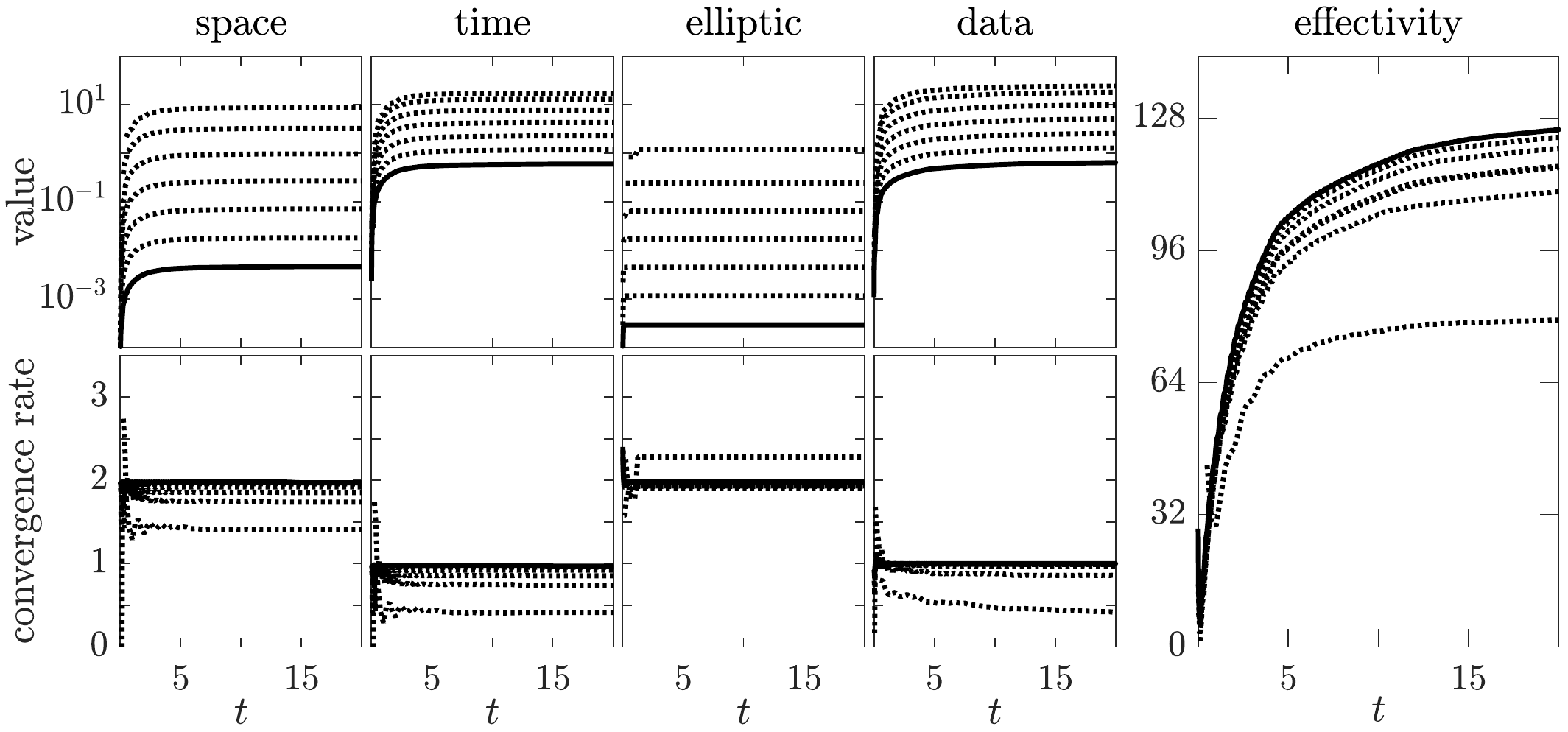}\vspace{-0.75em}}
  }%
  \caption{Behaviour of the error and estimator for the oscillating solution~\eqref{eq:vem:solution:oscillating} with $\tau \approx h$. Solid lines indicate results on the finest mesh.}
  \label{fig:vem:numerics:oscillating:1}
\end{figure}

\begin{figure}%
  \centering{%
  \subcaptionbox{Computed and estimated errors}[\textwidth]{\includegraphics[width=0.9\textwidth]{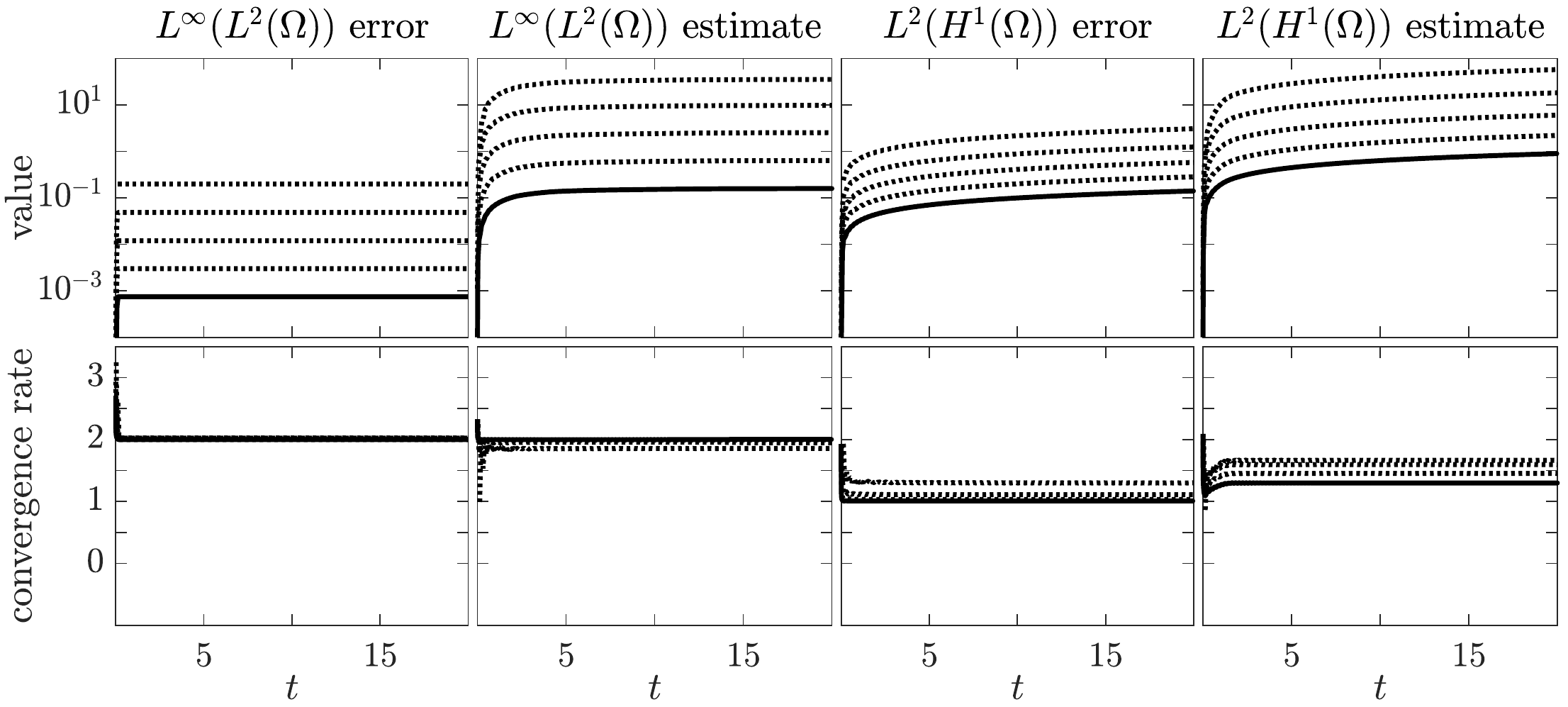}\vspace{-0.75em}}
  \vspace{1em}

  \subcaptionbox{{Components of the $L^2(0,t; H^1(\domain))$ estimator}}[\textwidth]{\includegraphics[width=0.9\textwidth]{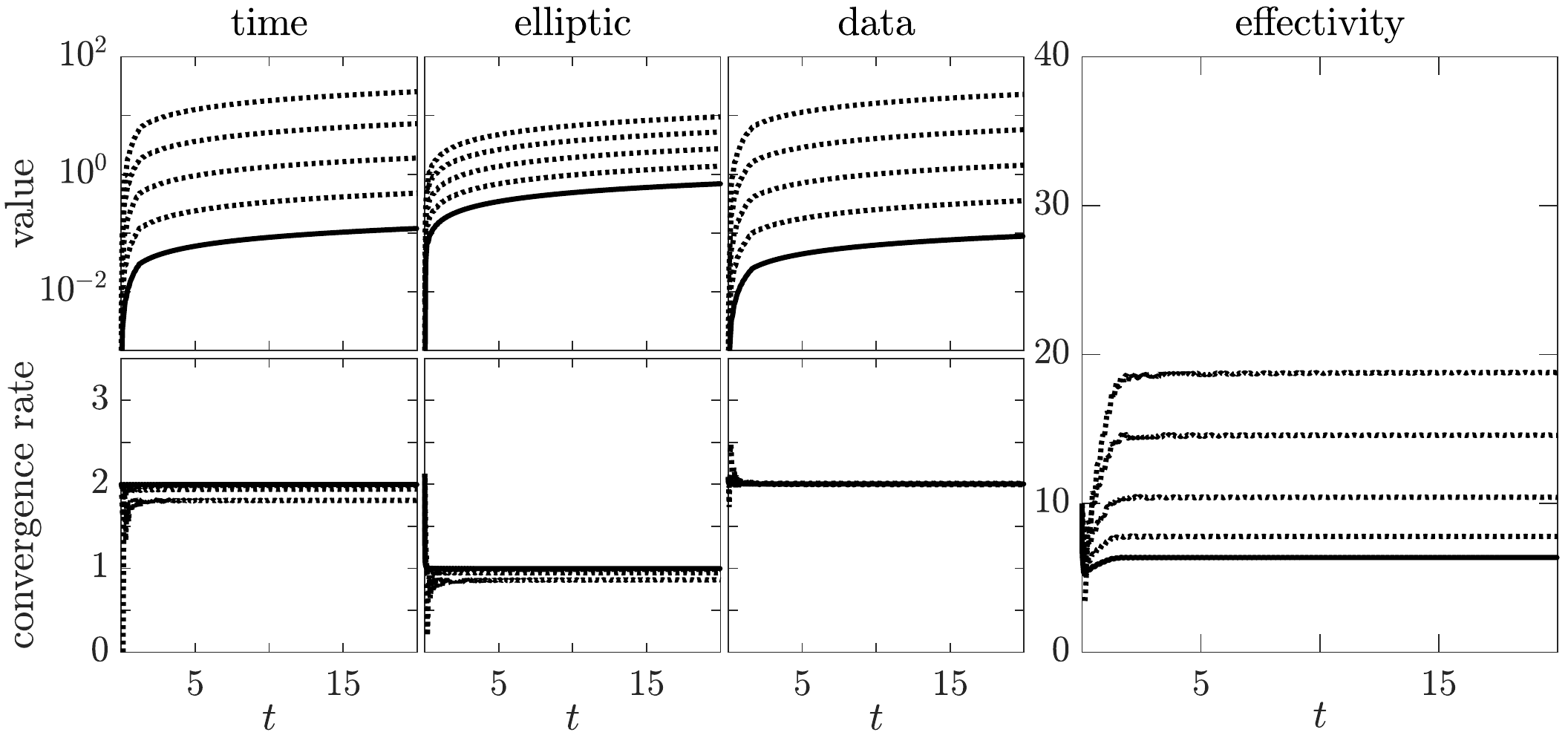}\vspace{-0.75em}}
  \vspace{1em}

  \subcaptionbox{{Components of the $L^{\infty}(0,t; L^2(\domain))$ estimator}}[\textwidth]{\includegraphics[width=0.9\textwidth]{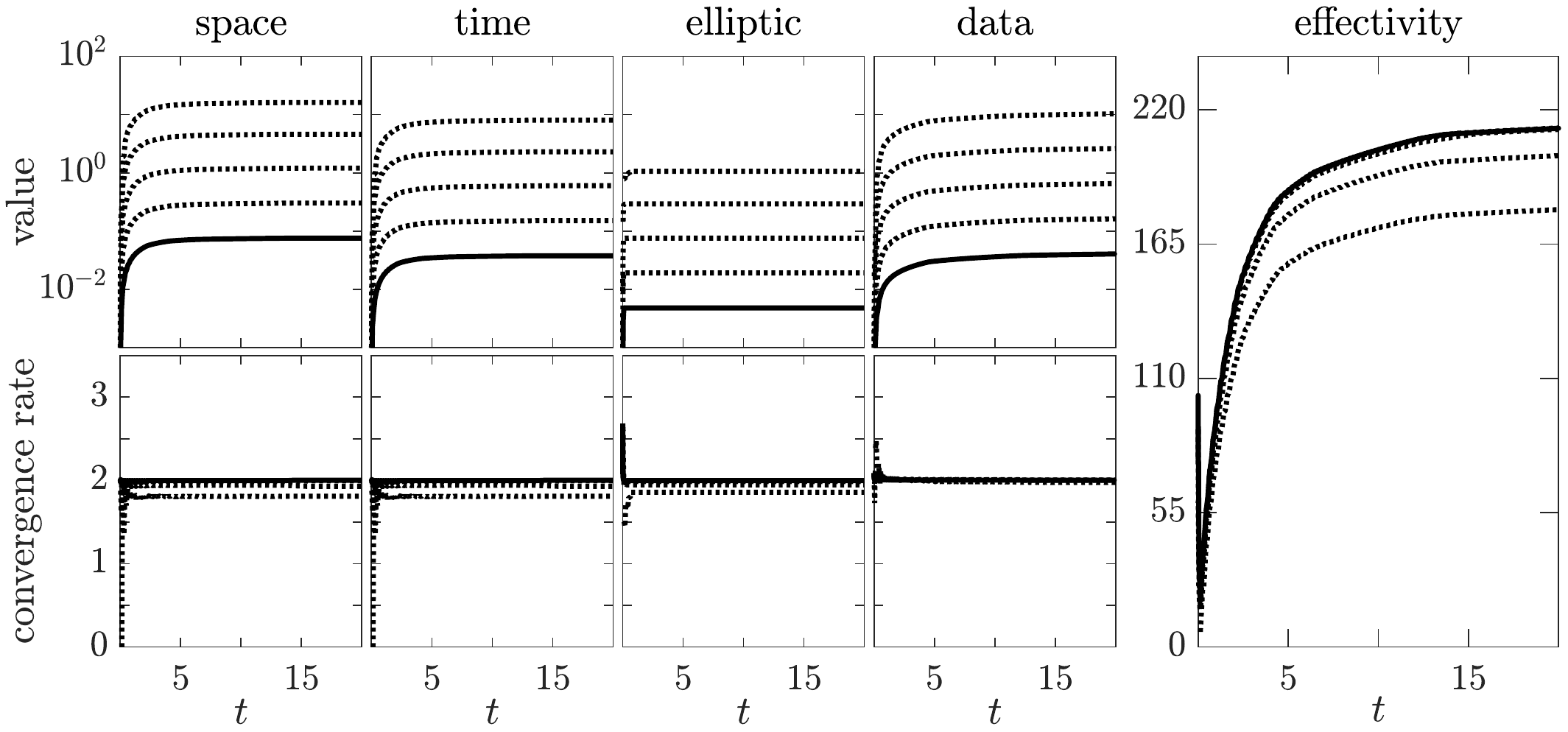}\vspace{-0.75em}}
  }%
  \caption{Behaviour of the error and estimator for the oscillating solution~\eqref{eq:vem:solution:oscillating} with $\tau \approx h^2$. Solid lines indicate results on the finest mesh.}
  \label{fig:vem:numerics:oscillating:2}
\end{figure}

The optimal order of convergence of both errors and estimators is observed in all cases, save for the slightly pre-asymptotic behaviour of the $L^2(H^1)$ estimator when $\timestep = h^2$ (Figure~\ref{fig:vem:numerics:oscillating:2}), which appears to be converging slightly faster than expected on the coarser meshes but approaches the optimal rate of 1 on the finer meshes.
This appears to be due to a slight imbalance between the (second order) time estimator, which initially dominates the estimate, and the (first order) $H^1(\domain)$ elliptic estimator.
A result of this is that the effectivity index in this case decreases as the mesh gets finer, although it may be expected to stabilise once the time estimator has decayed sufficiently and the expected asymptotic first order convergence rate, dictated by the elliptic estimator, is attained.
No similar pre-asymptotic behaviour is observed in the case of $\timestep = h$ (Figure~\ref{fig:vem:numerics:oscillating:1}), where the effectivity quickly stabilises at approximately 25, a typical value comparable with other similar estimates.

The $L^{\infty}(L^2)$ error and estimator both exhibit stellar performance, converging at the theoretically optimal first order when $\timestep = h$ and second order when $\timestep = h^2$, with effectivities of approximately 100 in the former case and 200 in the latter case.
Such effectivity values are typical of comparable $L^{\infty}(L^2)$ estimates, and we observe that sharp changes in their time derivatives are visible where different accumulation norms become optimal and they begin to approach a limiting value towards the end of the time interval (due to the use of $L^{\infty}$ time accumulations).

Finally, we observe that the behaviour of the components of both estimators apparently justifies their names: the space and time estimators converge at the expected rate for the spatial and temporal discretisations, respectively.

\subsubsection{Adaptive experiments}

We now present adaptive experiments with two different benchmark solutions.
The first we refer to as the \emph{layer solution}, defined as
\begin{align}\label{eq:vem:numerics:layer}
	u(x,y,t) = \big(1 + \exp(10 (x + y - t)) \big)^{-1},
\end{align}
which presents the challenge of an internal layer parallel to the line $y=-x$ which moves across the domain along $y=x$.
The second is the \emph{circulating solution}, with
\begin{align}\label{eq:vem:numerics:stirring}
    u(x,y,t) = (10-t)(x^2-x)(y^2-y) \Big( 1 - \frac{1}{1 + e^{\alpha(x,y,t)}} \Big),
\end{align}
where
\begin{align*}
    \alpha(x,y,t) = 25(10-t) \Big( \Big( 2x - \frac{1}{2} \sin \Big( \frac{\pi}{2} t \Big) - 1 \Big)^2 +  \Big( 2y - \frac{1}{2} \cos \Big( \frac{\pi}{2} t \Big) - 1 \Big)^2 - \frac{3}{200} \Big),
\end{align*}
and features a lump of mass circulating the domain whilst slowly diffusing away.

Both simulations begin with an initial mesh of 400 square elements and incorporate a simple spatial adaptive algorithm with a fixed time-step.
The elemental components of the $L^2(\domain)$ elliptic estimator of Lemma~\ref{lem:vem:ellipticEstimates} are used as as an error indicator: elements on which this quantity is above a certain threshold on every $5^{\text{th}}$ time-step are marked for refinement, while those below a lower threshold on every $10^{\text{th}}$ time-step are marked for coarsening.
Coarsening is achieved by simply merging patches of neighbouring marked elements, a process which naturally produces polygonal elements.
Such elements may then be refined at a later stage by splitting them back into the elements from which they were formed.
The numerical solution is then transferred onto this new mesh using the local transfer operator from Definition~\ref{def:ops}.

A key advantage of using these polygonal meshes is evident in that they enable aggressive coarsening to be performed.
Samples of the meshes produced at various time-steps when solving the  layer and circulating examples are shown in Figures~\ref{fig:vem:numerics:layer:meshes} and~\ref{fig:vem:numerics:stirring:meshes}, respectively.
The adapted meshes are highly suited to both solutions, with resolution focussed around the layers and sparsely deployed elsewhere.

\begin{figure}
	\centering{%
	\includegraphics[width=0.3\linewidth]{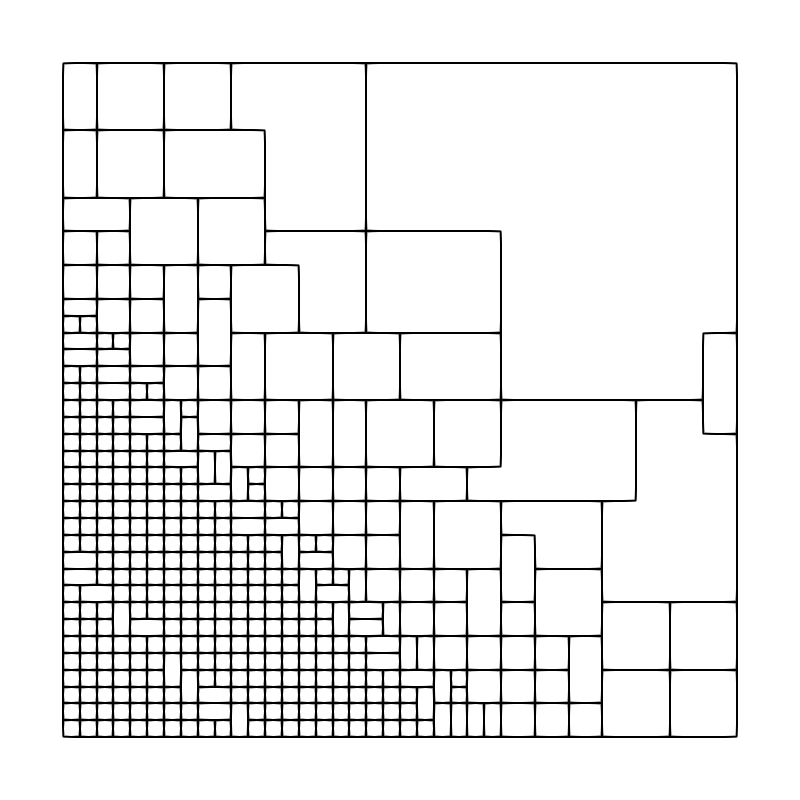}\quad%
	\includegraphics[width=0.3\linewidth]{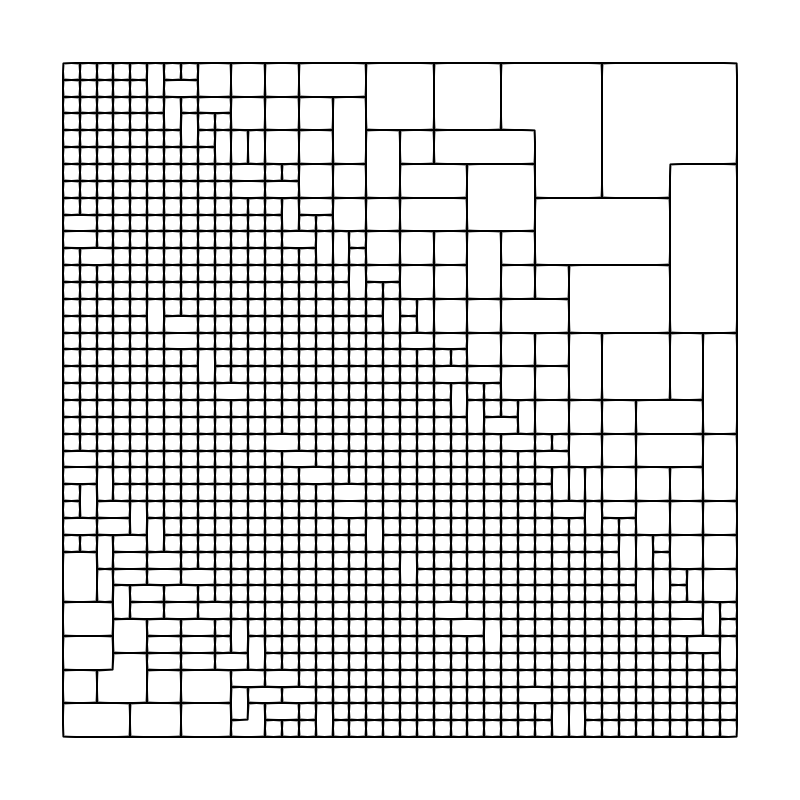}\quad%
	\includegraphics[width=0.3\linewidth]{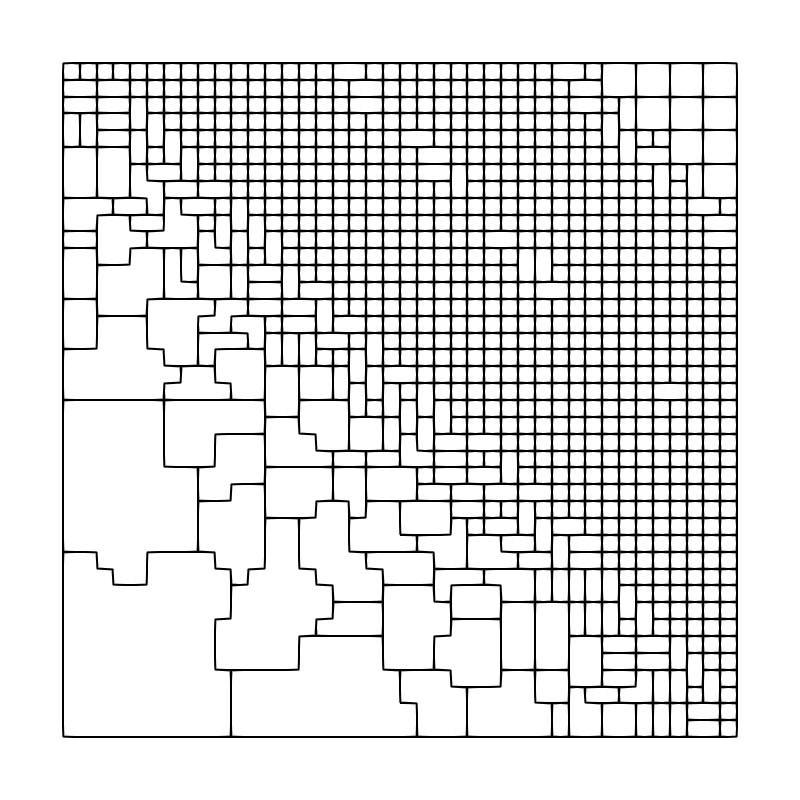}%
	}
	\caption{Examples of meshes adaptively generated to fit the layer solution~\eqref{eq:vem:numerics:layer} at various time moments. The path of the layer (oriented parallel to $y=-x$, travelling parallel to $y=x$) is clearly marked by the regions of refinement.}
	\label{fig:vem:numerics:layer:meshes}
\end{figure}

\begin{figure}
	\centering{%
	\includegraphics[width=0.3\linewidth]{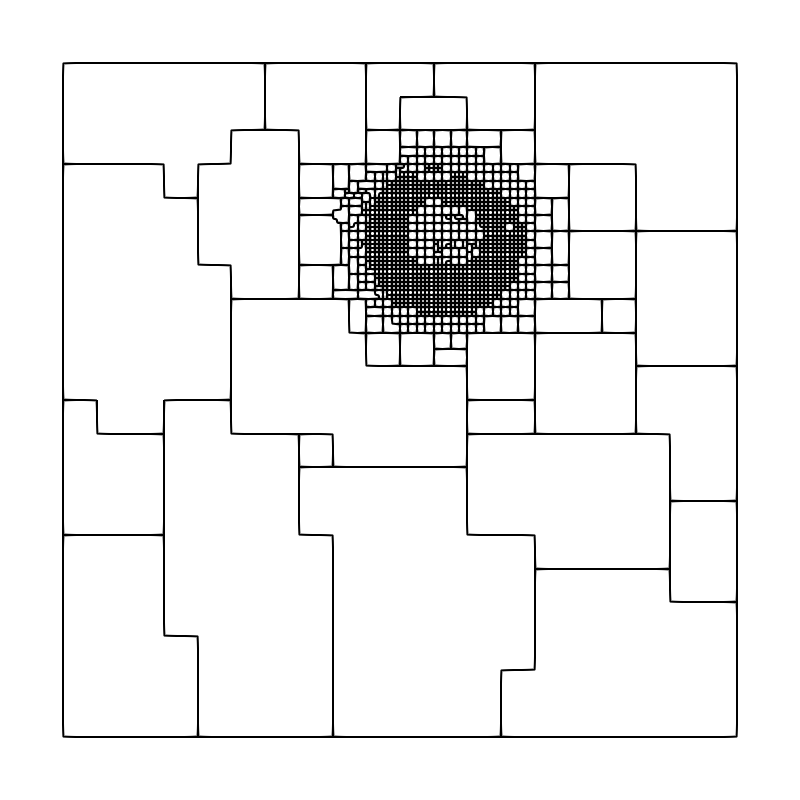}\quad%
	\includegraphics[width=0.3\linewidth]{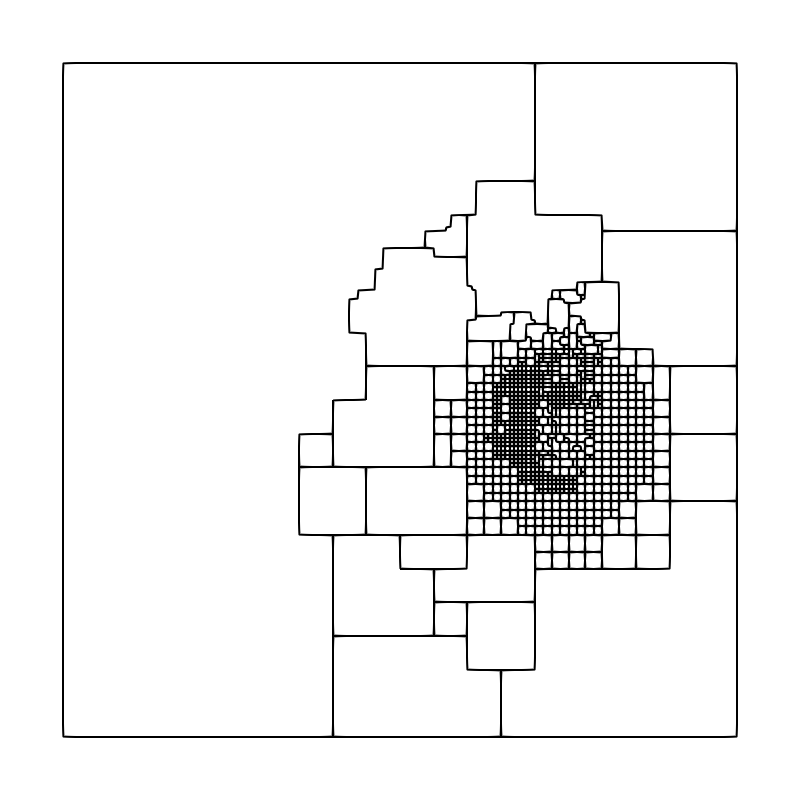}\quad%
	\includegraphics[width=0.3\linewidth]{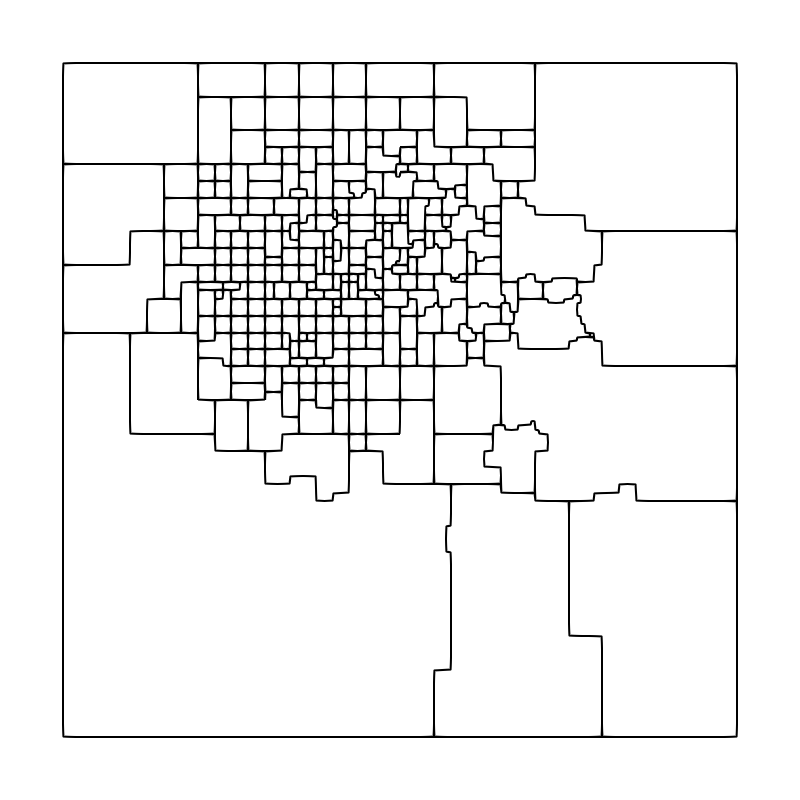}%
	}
	\caption{Examples of meshes adaptively generated to fit the circulating solution~\eqref{eq:vem:numerics:stirring} at various time moments. The concentration of mass is clearly marked by the refined regions as it moves around the domain and diffuses away.}
	\label{fig:vem:numerics:stirring:meshes}
\end{figure}

We also plot the behaviour of the errors and the estimators for the layer and circulating examples, in Figures~\ref{fig:vem:numerics:layer} and~\ref{fig:vem:numerics:stirring}, respectively.
It is worth noting that, even in this challenging adaptive situation, the $L^2(0,t; H^1(\domain))$ estimator behaves extremely well, achieving effectivity values of approximately 10.
As in the previous numerical experiments, the effectivities of the $L^{\infty}(0,t; L^2(\domain))$ estimator are somewhat larger and, although
the cause of this is not clear, we note that they appears to reach a limiting value towards the end of the simulation.
The spike in effectivity which appears at around $t=5$ for the circulating solution seems to be a result of the adaptive algorithm making a poor decision when coarsening elements, causing information to be lost.
Interestingly, there is no effect on the $L^2(0,t; H^1(\domain))$ error or estimate, and the impact appears to be picked up by the $L^{\infty}(0,t; L^2(\domain))$ estimator before the true error, resulting in an effectivity spike.
A more complex adaptive algorithm could be able to correct such a mistake, for example by rewinding a few time steps after the sudden increase in the estimator; we do not explore this here but refer to~\citer{Cangiani:2016} for such an algorithm.

\section{Conclusions}\label{sec:conclusion}

The new computable \emph{a posteriori} error estimates we have presented open up the possibility of using such estimates with very general forms of mesh modification.
This is achieved by removing the assumption of hierarchicality placed on the discrete spaces required by previous comparable estimates, an assumption which, as discussed in Section 1, entailed significant restrictions in practice.
Removing this assumption enabled us to introduce virtual element schemes using general adaptive polygonal meshes.
The new error estimates and adaptive algorithms we have presented in this setting appear to be the first incorporating such a general approach to coarsening in the context of time-dependent problems.
The comprehensive set of numerical experiments we have presented for the virtual element method and for a moving mesh method demonstrate the practical performance of the estimators in several challenging scenarios.

This work is a step towards the development of non-standard adaptive algorithms, able to harness adaptive polygonal meshes in a more nuanced way, to generate meshes tailored to the behaviours and anisotropies of the problem at hand.

\section{Acknowledgements}
This research work was supported by the Hellenic Foundation for Research and Innovation (H.F.R.I.) under the ``First Call for H.F.R.I. Research Projects to support Faculty members and Researchers and the procurement of high-cost research equipment grant'' (Project Number: 3270).  
AC also acknowledges support from the EPSRC (grant EP/L022745/1) and MRC (grant MR/T017988/1). 
EHG also acknowledges the support of The Leverhulme Trust (grant RPG-2015-306).
OJS acknowledges support from the EPSRC (grants EP/P000835/1 and EP/R030707/1).

\begin{figure}%
  \centering{%
  \subcaptionbox{$L^{\infty}(L^2(\domain))$ error and estimate}[0.5\textwidth]{\includegraphics[width=0.5\textwidth]{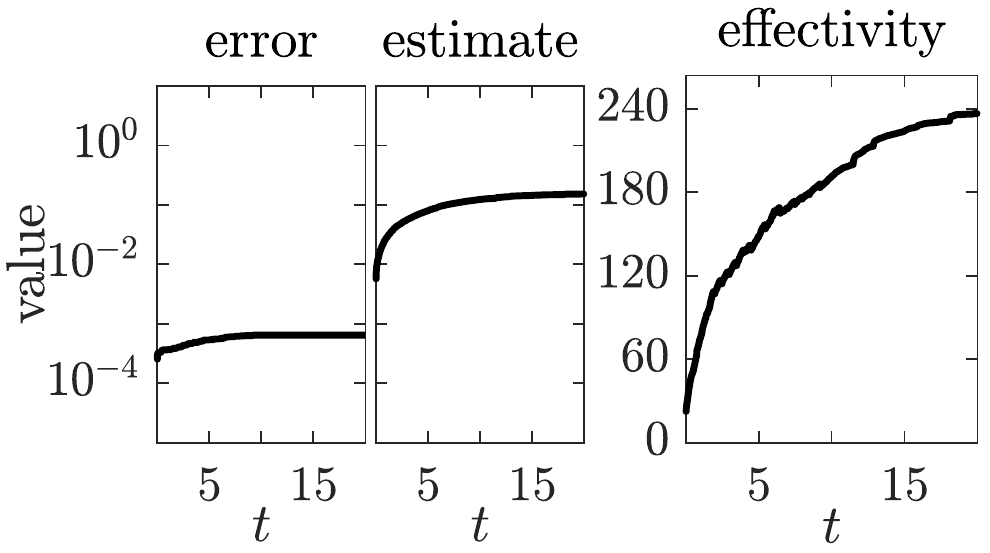}\vspace{-0.5em}}\hfill%
  \subcaptionbox{$L^2(H^1(\domain))$ error and estimate}[0.5\textwidth]{\includegraphics[width=0.5\textwidth]{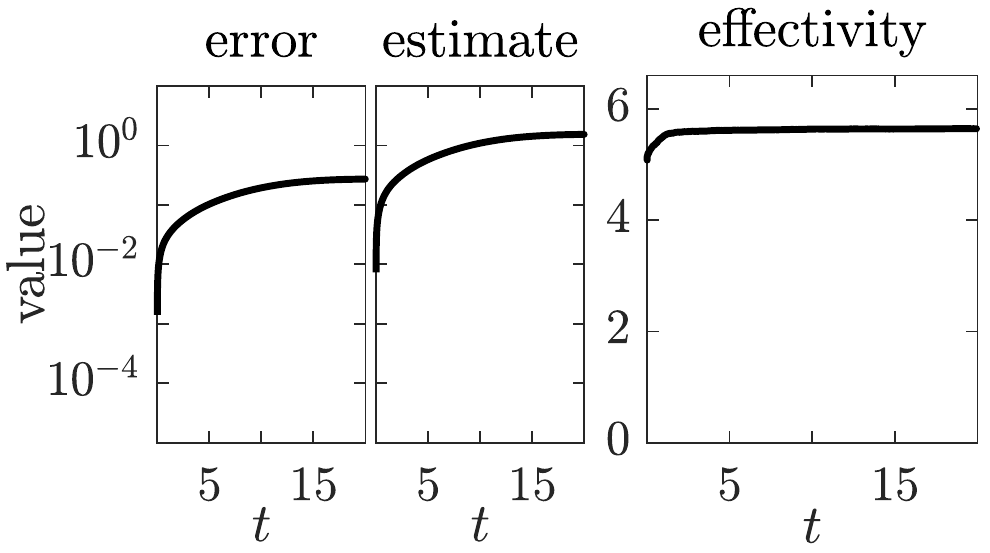}\vspace{-0.5em}}
  \vspace{0.25em}

  \subcaptionbox{{$L^{\infty}(L^2(\domain))$ estimator terms}}[0.5\textwidth]{\includegraphics[width=0.5\textwidth]{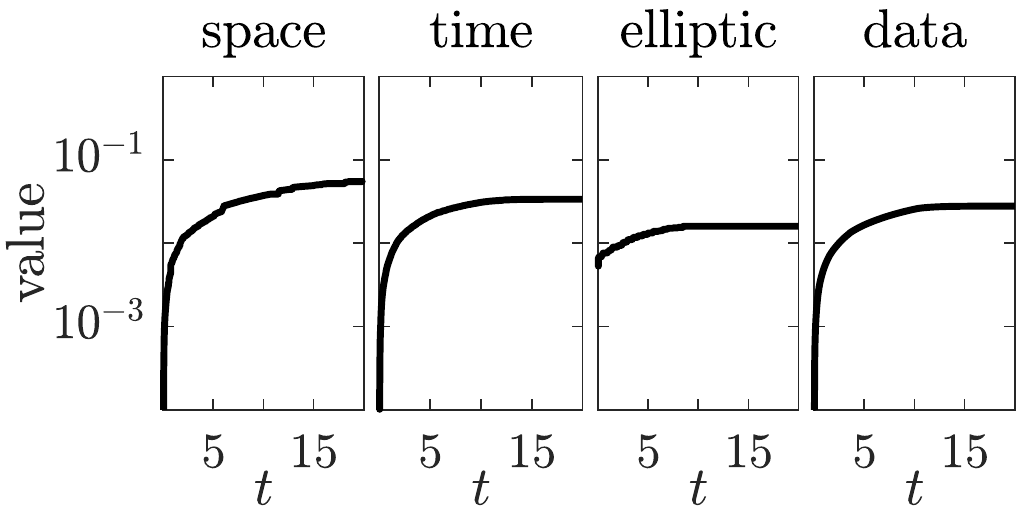}\vspace{-0.5em}}\hfill%
  \subcaptionbox{{$L^2(H^1(\domain))$ estimator terms}}[0.5\textwidth]{\includegraphics[width=0.5\textwidth]{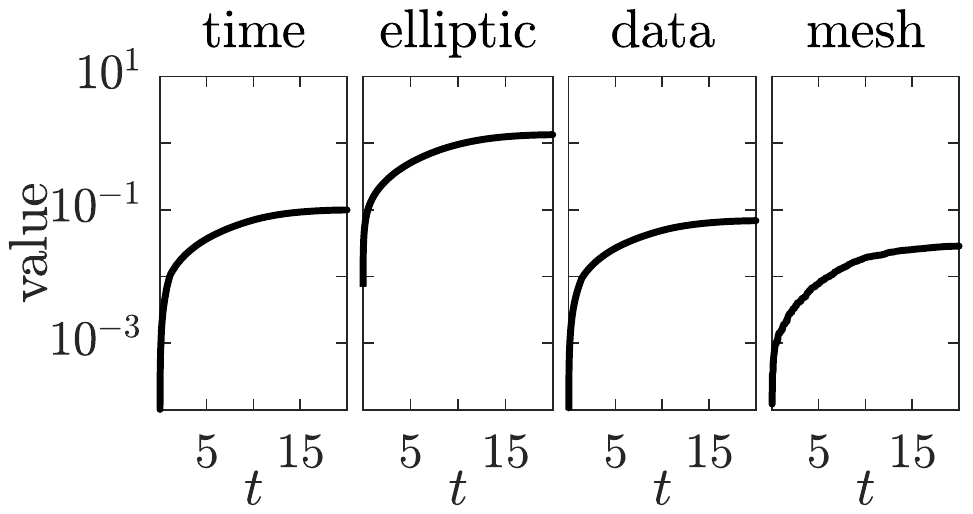}\vspace{-0.5em}}
	}
	\vspace{-0.5em}
  \caption{Error and estimator behaviour for the adaptive simulation of the layer problem~\eqref{eq:vem:numerics:layer}.}
  \label{fig:vem:numerics:layer}
\end{figure}

\begin{figure}%
  \centering{%
  \subcaptionbox{$L^{\infty}(L^2(\domain))$ error and estimate}[0.5\textwidth]{\includegraphics[width=0.5\textwidth]{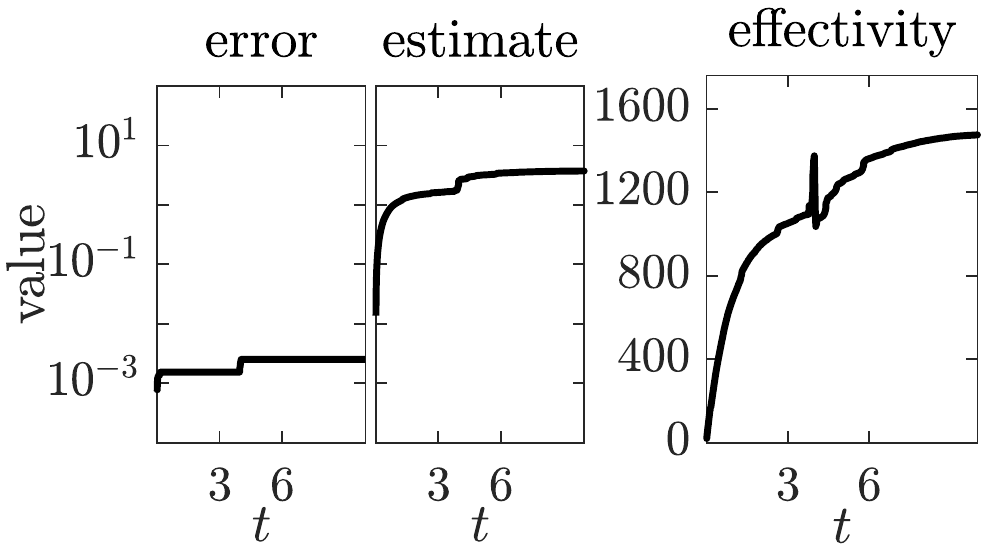}\vspace{-0.5em}}\hfill%
  \subcaptionbox{$L^2(H^1(\domain))$ error and estimate}[0.5\textwidth]{\includegraphics[width=0.5\textwidth]{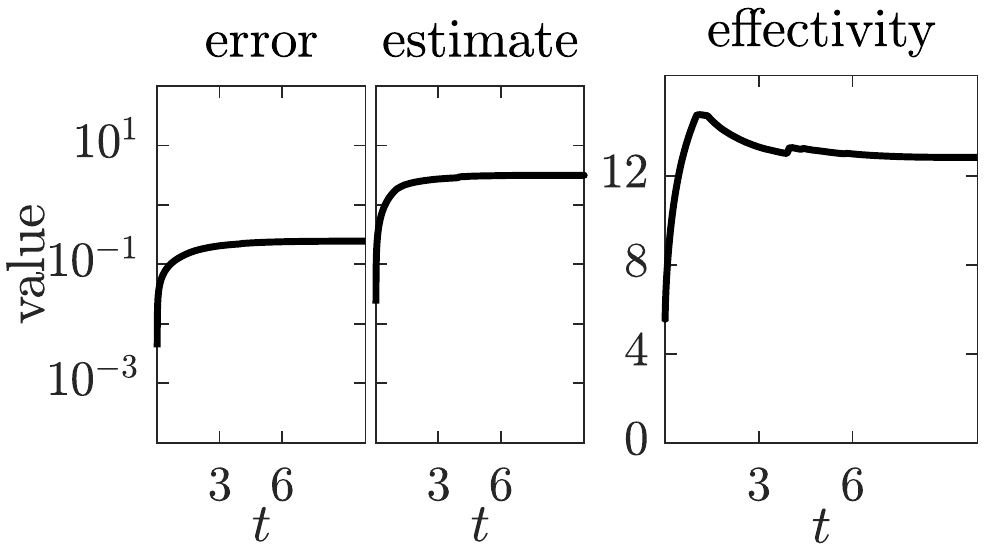}\vspace{-0.5em}}
  \vspace{0.25em}

  \subcaptionbox{{$L^{\infty}(L^2(\domain))$ estimator terms}}[0.5\textwidth]{\includegraphics[width=0.5\textwidth]{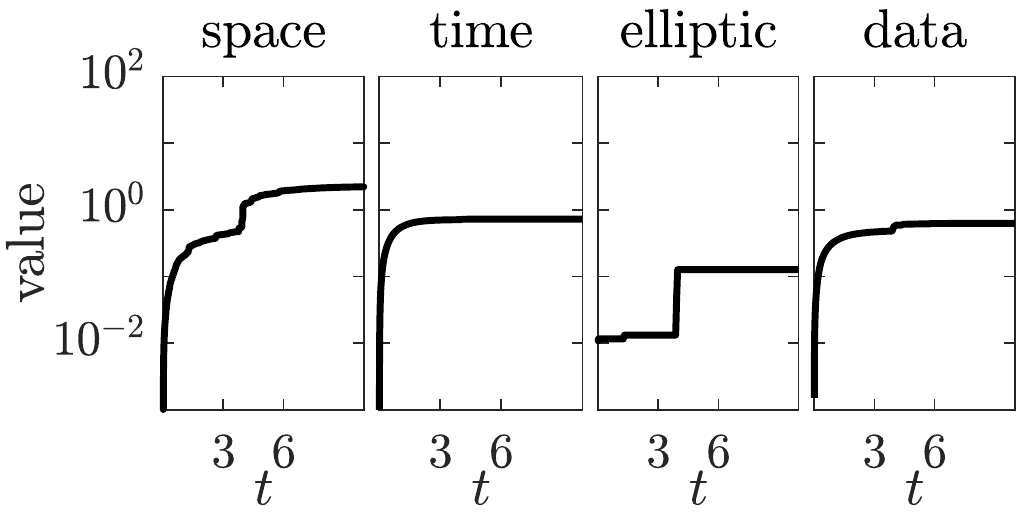}\vspace{-0.5em}}\hfill%
  \subcaptionbox{{$L^2(H^1(\domain))$ estimator terms}}[0.5\textwidth]{\includegraphics[width=0.5\textwidth]{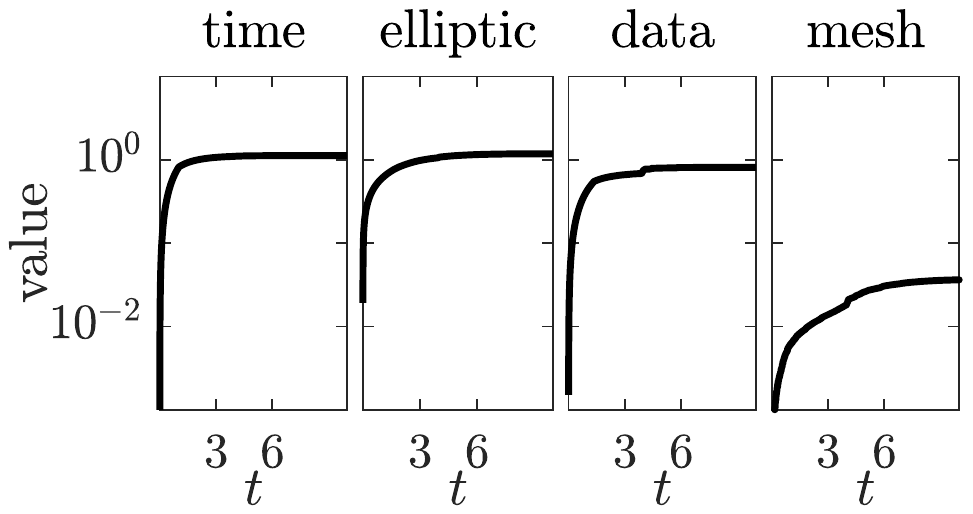}\vspace{-0.5em}}
	}
	\vspace{-0.5em}
  \caption{Error and estimator behaviour for the adaptive simulation of the circulating problem~\eqref{eq:vem:numerics:stirring}.}
  \label{fig:vem:numerics:stirring}
\end{figure}

\renewcommand{\v}[1]{\caron{#1}}

\bibliographystyle{acm}
\bibliography{references_clean}

\end{document}